\documentclass[reqno]{article}
\usepackage{amsmath,amsfonts,amssymb,amsthm}
\usepackage[final]{graphicx}

\usepackage{bbm}
\newtheorem{theorem}{Theorem}
\newtheorem{lemma}[theorem]{Lemma}
\newtheorem{corollary}[theorem]{Corollary}

\newtheorem{conjecture}[theorem]{Conjecture}

\newcommand\ceil[1]{\lceil #1 \rceil}
\newcommand\eps{\varepsilon}
\renewcommand\le{\leqslant}
\renewcommand\ge{\geqslant}
\renewcommand\varrho{\rho} 

\newcommand\NN{\mathbb{N}}
\newcommand\ZZ{{\mathbb Z}}
\newcommand\Rp{{\mathbb R^+}}

\newcommand\E{{\mathbb E}}
\renewcommand\Pr{{\mathbb P}}
\newcommand\pto{\overset{\mathrm{p}}{\to}}
\newcommand\dtv[2]{\mathrm{d}_{\mathrm{TV}}\bigl(#1\,,#2\bigr)}
\newcommand\Po{\mathrm{Po}}

\newcommand\nopf{\qed}

\newcommand\cB{{\mathcal B}}
\newcommand\cC{{\mathcal C}}
\newcommand\cD{{\mathcal D}}
\newcommand\cE{{\mathcal E}}
\newcommand\cL{{\mathcal L}}
\newcommand\cI{{\mathcal I}}
\newcommand\cM{{\mathcal M}}
\newcommand\cQ{{\mathcal Q}}
\newcommand\cR{{\mathcal R}}
\newcommand\cS{{\mathcal S}}
\newcommand\cT{{\mathcal T}}

\newcommand{\uv}{{\underline v}}
\newcommand{\uu}{{\underline u}}
\newcommand{\uc}{{\underline c}}

\newcommand\tc{t_{\mathrm{c}}}
\newcommand\tcx{t_{\mathrm{b}}}
\newcommand\tcs{t_{\mathrm{s}}}
\newcommand{\ts}{t} 

\newcommand{\St}{{\tilde{S}}}

\newcommand{\cTv}[1][{v,\ts}]{{\mathcal T}_{{#1}}}
\newcommand{\cVv}[1][{v,\ts}]{{\mathcal V}_{{#1}}}
\newcommand{\tpv}[1][{v,\ts}]{{\mathfrak T}_{{#1}}}
\newcommand{\tpvV}[1][{v,\ts}]{{\mathfrak V}_{{#1}}}
\newcommand\bpt[1][{\ts}]{{\mathfrak T}_{{#1}}}
\newcommand\bpV[1][{\varphi,\ts}]{{\mathfrak V}_{{#1}}}
\newcommand\bp[1][{\varphi,\ts}]{{\mathfrak X}_{{#1}}}
\newcommand\tF{{\widetilde F}}
\newcommand\norm[1]{||#1||}

\begin{document}
\title{The evolution of subcritical Achlioptas processes}
\author{Oliver Riordan and Lutz Warnke%
\thanks{Mathematical Institute, University of Oxford, 24--29 St Giles', Oxford OX1 3LB, UK.
E-mail: {\tt \{riordan,warnke\}@maths.ox.ac.uk}.}}
\date{April 23, 2012}
\maketitle

\begin{abstract}
In Achlioptas processes, starting from an empty graph, in each step two 
potential edges are chosen uniformly at random, and using some rule one of them 
is selected and added to the evolving graph. Although the evolution of such 
`local' modifications of the Erd{\H o}s--R\'enyi random graph process has 
received considerable attention during the last decade, so far only rather 
simple rules are well understood. Indeed, the main focus has been on 
`bounded-size' rules, where all component sizes larger than some constant $B$ 
are treated the same way, and for more complex rules very few rigorous results 
are known. 

In this paper we study Achlioptas processes given by (unbounded) size rules 
such as the sum and product rules. Using a variant of the neighbourhood 
exploration process and branching process arguments we show that certain key 
statistics are tightly concentrated at least until the susceptibility (the 
expected size of the component containing a randomly chosen vertex) diverges. 
Our convergence result is most likely best possible for certain rules: in the 
later evolution the number of vertices in small components may not be 
concentrated. Furthermore, we believe that for a large class of rules the 
critical time where the susceptibility `blows up' coincides with the 
percolation threshold. 
\end{abstract}

\section{Introduction}
In 2000 Dimitris Achlioptas suggested a class of variants of the classical 
random graph process, now called \emph{Achlioptas processes}. Such a process 
defines, for each $n$, a random sequence $(G_i)_{i \ge 0}=(G_i^{\cR})_{i \ge 0}$ 
of graphs with vertex set $[n]$ as follows: start with an empty graph 
$G_{0}$ on $n$ vertices. At each step $i \ge 1$, two potential edges $e_1$ and 
$e_2$ are chosen independently and uniformly at random from all $\binom{n}{2}$ 
possible edges (or from those edges not present in $G_{i-1}$). One of these 
edges is selected according to a rule $\cR$ and added to the graph, so 
$G_{i}=G_{i-1}\cup \{e\}$ for $e=e_1$ or $e_2$. (As usual in combinatorics,
we omit the dependence on $n$ in the notation to avoid clutter.) 
Achlioptas processes are a special case of the more general class of 
\emph{$\ell$-vertex rules}, where in each step $\ell \ge 2$ vertices 
$v_1, \ldots, v_{\ell}$ are chosen independently and uniformly at random and then at least
(usually exactly) one edge between these vertices is added. 
Always adding $e=e_1$ (or $e=v_1v_2$) gives the Erd\H os--R\'enyi random graph 
process, which has been extensively studied for more than 50 years; by now many 
of its properties are extremely well understood, in particular the evolution of 
the component structure, see e.g.~\cite{BBRG,BR-RG,JKLP1993}.

During the last decade the evolution of certain `simple' Achlioptas processes 
has received considerable attention, mainly for so-called \emph{bounded-size} 
rules, see e.g.~\cite{BBW2011,BF2001,BohmanKravitz2006,JansonSpencer2010,KPS2011,AAP2011,SpencerWormald2007}. 
These make their decisions based only on the sizes of the components containing 
the endvertices of $e_1$ and $e_2$, with the restriction that all sizes larger 
than some constant $B$ are treated in the same way. For bounded-size rules a 
number of results have been established concerning concentration of the number 
of vertices in components of fixed size~\cite{BohmanKravitz2006,SpencerWormald2007}, 
the size of the second largest component~\cite{AAP2011}, the existence and 
location of the percolation phase transition where the (unique) linear size 
`giant' component first emerges~\cite{BohmanKravitz2006,SpencerWormald2007}, 
the `order' of the phase transition~\cite{AAP2011}, and the rescaled size of 
the largest component~\cite{AAP2011}. For one particular bounded-size rule (a 
variant of a process suggested by Bohman and Frieze~\cite{BF2001}), some finer 
details of the percolation phase transition have recently been 
investigated~\cite{BBW2011,JansonSpencer2010,KPS2011}. 
The punchline of the work mentioned above is that bounded-size rules seem to 
show many qualitative similarities with the `classical' Erd\H os--R\'enyi 
random graph process; in the language of mathematical physics they appear to 
be in the same `universality class'.

In contrast, for more involved Achlioptas processes very few rigorous results 
are known, although these have been widely studied in recent years. 
This in particular applies to the class of `unbounded'-size rules, usually 
simply called \emph{size rules}, whose choices depend only on the sizes of 
the four components containing the endvertices of the two offered edges. 
To illustrate our very limited understanding of these, we mention that in one 
line of research, stimulated by a conjecture of Achlioptas, D'Souza and 
Spencer published in \emph{Science}~\cite{Science2009} (based on `conclusive 
numerical evidence'), it was believed that certain size rules (in particular 
the \emph{product rule}) could give rise to a discontinuous (`first 
order') phase transition. However, recently it was rigorously shown 
in~\cite{AAP2011,Science2011} that the phase transition is in fact continuous 
for all Achlioptas processes (even for a larger class of processes). The 
surprises that (unbounded) size rules have shown so far indicate that our 
intuition for these processes still needs to be developed. 
In fact, obtaining non-trivial results for involved Achlioptas processes using 
e.g.\ the product rule is well known to be a technical challenge (see 
e.g.~\cite{Science2009,Science2011J}).

Given a graph $G$, let $N_k(G)$ denote 
the number of vertices of $G$ in components of size $k$, and define the 
\emph{susceptibility} of $G$ as $S(G)=\sum k N_k(G)/n$, so $S(G)$ is the 
expected size of the component containing a randomly chosen vertex.
Let $L_1(G)$ denote the number of vertices in the (a, if there is a tie) largest 
component of $G$. We say that $\tc=\tc^{\cR}$ is the \emph{percolation threshold} 
for the process $(G_i^{\cR})$ if for $t<\tc$ whp 
$L_1(G_{tn})=o(n)$ while for $t>\tc$ whp $L_1(G_{tn})=\Omega(n)$
(as usual, we henceforth ignore the irrelevant rounding to integers, and say 
that an event holds \emph{whp}, if it holds with probability tending to $1$ 
as $n\to\infty$).
The analysis for bounded-size rules in~\cite{BohmanKravitz2006,JansonSpencer2010,SpencerWormald2007} 
uses $S(G_i)$ as well as $N_k(G_i)$ for fixed $k$ as key statistics. In the 
`subcritical' regime $t<\tc$, they
use Wormald's differential equation method~\cite{Wormald1995DEM,Wormald1999DEM} 
to establish the existence of functions $\rho_k=\rho_k^{\cR}$, $s=s^{\cR}$ such 
that $N_k(G_{tn}) \approx \rho_k(t)n$ and $S(G_{tn}) \approx s(t)$ hold whp.
Based on this they show that
$\tc=\tc^{\cR}$ is given by the blow-up point of the susceptibility:
$\lim_{t \nearrow \tc} s(t) = \infty$.
As indicated by Spencer and Wormald~\cite{SpencerWormald2007}, 
for general size rules the approximation of the key statistics using the 
differential equation method seems difficult. Another intricacy for such rules 
is that the dependencies among the selected edges often seem to be more complex 
in comparison with bounded-size rules, for which a large subset of the added 
edges can be thought of as chosen uniformly at random (essentially, when all 
four endvertices are in components of size larger than $B$); we return to this 
in Section~\ref{sec:disc}.

\subsection{Main result}\label{sec:main}
In this paper we establish the first rigorous convergence result for Achlioptas 
processes using unbounded size rules such as the product rule: we show that the 
number of vertices in components of size $k \ge 1$ (and the susceptibility) is 
tightly concentrated until the susceptibility `blows up', which happens at a 
critical time $\tcx$. In fact, our result holds for a very large class of 
Achlioptas-like processes, including essentially all Achlioptas processes 
studied so far (see Section~\ref{sec:evo} for the formal definition of $\ell$-vertex 
size rules). Here $S(G_{tn}^{\cR}) \pto \infty$ as $n \to \infty$ means that 
for any $C >0$ we have $\Pr(S(G_{tn}^{\cR}) \le C) \to 0$ as $n \to \infty$, 
and $N_{\ge k}(G)$ denotes the number of vertices of $G$ in components of size 
at least $k$. 
\begin{theorem}\label{thm:main}
Let $\ell \ge 2$ and let $\cR$ be an $\ell$-vertex size rule. There exist 
$\tcx=\tcx^{\cR} \in [\frac{1}{\ell(\ell-1)},1]$ and functions 
$(\rho_k)_{k \ge 1}$ with $\rho_k=\rho_k^{\cR}:[0,\tcx)\to[0,1]$ such that the 
following holds. For every $t \ge \tcx$ we have 
\begin{equation}\label{eq:S2:infty}
S(G_{tn}^{\cR}) \pto \infty
\end{equation}
as $n \to \infty$. For every $t < \tcx$ we have $\sum_{k \ge 1}\rho_k(t)=1$. 
Also, for every $t < \tcx$ there exist $a,A,C>0$ (depending only on 
$\cR,\ell,t$) such that for every $t' \in [0,t]$ we have 
$\rho_{k}(t') \le Ae^{-ak}$ for all $k \ge 1$. In addition, for 
$n \ge n_0(\cR,\ell,t)$ the following holds with probability at least 
$1-n^{-99}$: for every $0 \le i \le tn$ we have 
\begin{gather}
\label{eq:Nk:C:rho}
|N_{k}(G_{i}^{\cR}) - \rho_k(i/n)n| \le (\log n)^C n^{1/2} \quad \text{for all $k \ge 1$,}\\
\label{eq:S:C:rho}
|S(G_{i}^{\cR}) - {\textstyle \sum_{k \ge 1}}k \rho_k(i/n)| \le (\log n)^C n^{-1/2}, 
\end{gather}
and $N_{\ge k}(G_{i}^{\cR}) \le Ae^{-ak}n$ for all $k \ge 1$. 
\end{theorem}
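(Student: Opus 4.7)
The plan is to construct a self-consistent branching process $\bpt$, whose law depends only on $\cR$ and $t$, that serves as the local limit of the component of a typical vertex in $G_{tn}^{\cR}$, and then derive every statement of the theorem from comparison between the actual exploration and this branching process.

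I would work in the continuous-time Poissonised version of the process, where $\ell$-tuples of vertices arrive according to independent Poisson clocks. The branching process $\bpt$ is defined recursively on $[0, t]$: at each Poisson event in which a vertex already in the tree appears in an $\ell$-tuple at time $s$, the remaining $\ell - 1$ external vertices are assigned i.i.d.\ "types" drawn from the size distribution $(\rho_k(s))_{k \ge 1}$ that the branching process itself produces, and $\cR$ is applied to the $\ell$ types to decide whether to attach any external vertex (together with its subtree) to the current tree. Setting $\rho_k(t) = \Pr(|\bpt| = k)$ and
\[
\tcx = \sup\bigl\{t \ge 0 : {\textstyle \sum_k} k \rho_k(s) < \infty \text{ for all } s \le t\bigr\}
\]
makes the $\rho_k$ and $\tcx$ of the theorem well defined. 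Existence, uniqueness and smoothness of this self-consistent fixed point on $[0, \tcx)$ follow from a monotone Picard iteration on distributions on $\NN$; the total mass $\sum_k \rho_k(t) = 1$ and the uniform exponential tails $\rho_k(s) \le A e^{-ak}$ for $s \in [0, t]$ come from standard total-progeny tail estimates for subcritical branching processes whose per-step offspring mean is uniformly bounded below $1$ on $[0, t]$. The bound $\tcx \ge 1/(\ell(\ell-1))$ follows from stochastic domination of $G_{tn}^{\cR}$ by the graph in which all $\binom{\ell}{2}$ potential edges of each drawn $\ell$-tuple are accepted, which is subcritical below this time by a standard Erd{\H o}s--R\'enyi-type comparison, while $\tcx \le 1$ is a crude counting bound at $t = 1$.

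For the concentration statements I would run a breadth-first exploration $\cTv$ of the component of $v$ in $G_{tn}^{\cR}$, coupled with $\bpt$ up to a cutoff at $(\log n)^{C_1}$ vertices. The exponential tail of $\rho_k$ makes hitting the cutoff unlikely, while the coupling error per revealed $\ell$-tuple is $O(1/n)$ as long as fewer than $o(\sqrt{n})$ vertices have been revealed; together these give $\E N_k(G_{tn}^{\cR}) = \rho_k(t) n + O(n/\log n)$ uniformly in $k$. Concentration of $N_k$ around its mean on the scale $(\log n)^C n^{1/2}$ would then follow from a step-exposure Doob martingale combined with a Warnke-type typical bounded-differences inequality, since altering a single step affects $N_k$ only through a cascade whose typical size is controlled by the exponential tail of $\rho_k$. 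The bound \eqref{eq:S:C:rho} and the uniform tail $N_{\ge k}(G_i^{\cR}) \le A e^{-ak} n$ follow by summing \eqref{eq:Nk:C:rho} against the exponential tail. Statement \eqref{eq:S2:infty} for $t \ge \tcx$ is then obtained by monotonicity and contradiction: if $S(G_{tn}^{\cR})$ were bounded in probability at some $t > \tcx$, the local-limit picture would extend across $\tcx$, contradicting the definition of $\tcx$ as the divergence point of $\sum_k k \rho_k$.

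The main obstacle is the coupling itself. Unlike for bounded-size rules, $\cR$ can react to arbitrarily large component sizes in the $\ell - 1$ other vertices of each drawn tuple, so a naive BFS exploration truncated at fixed depth loses the information needed to replicate the rule's decisions. The resolution I would pursue is an on-demand scheme in which, whenever $\cR$ must be applied during the exploration, the component sizes of the currently required external vertices are revealed via short auxiliary explorations, whose lengths are controlled inductively by a bootstrapped version of the exponential tail $N_{\ge k}(G_i^{\cR}) \le A e^{-ak} n$, so that the overall coupling succeeds with probability $1 - o(n^{-1})$ and the proof goes through by a simultaneous induction on the time horizon.
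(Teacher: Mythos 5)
Your proposal identifies the right high-level objects (a branching process that serves as a local limit, coupling the BFS exploration with it, concentration via bounded-differences) but is built around a self-consistent fixed point whose existence and uniqueness you assert rather than prove, and this is exactly the obstruction the paper is designed to circumvent. You define $\bpt$ so that external vertices at time $s$ receive types drawn from the size distribution $(\rho_k(s))$ ``that the branching process itself produces,'' and claim that ``existence, uniqueness and smoothness \dots\ follow from a monotone Picard iteration.'' For general $\ell$-vertex size rules there is no reason for this map to be monotone or contractive: the rule's decisions depend discontinuously on the four component sizes, and the authors explicitly note in the introduction that for this reason the differential-equation approach (which reduces to the same fixed-point/uniqueness problem) ``need[s] the additional assumption that certain systems of differential equations have unique solutions, which is not known to hold for the product rule.'' Defining $\rho_k$ implicitly as a fixed point therefore reintroduces the very difficulty the theorem is supposed to resolve.

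The paper avoids self-reference with two moves your proposal does not make. First, the branching process $\bp[\varphi,t]$ used in each step has a \emph{fixed} offspring distribution $\varphi$ (the empirical component-size distribution at the beginning of a short time interval of length $\sigma<[\ell(\ell-1)L]^{-1}$), not a time-varying self-consistent one; $\varrho(k,t)$ is then defined \emph{explicitly} by \eqref{eq:varrho}, and the evolving $\rho_k(t)$ is stitched together by advancing inductively interval by interval (Lemma~\ref{lem:main}). Second, and more fundamentally, the branching process never models $\cR$'s decisions at all. The two-round exposure separates (i) the rule-independent ``influence structure'' -- the set of tuples and $F$-components reachable from $v$ when \emph{all} $\binom{\ell}{2}$ pairs of every arriving tuple are added -- from (ii) the random order in which the rule processes them. $\bp[\varphi,t]$ approximates only (i); the effect of $\cR$ is a deterministic post-processing map $T\mapsto|C^{\cR}(T)|$ applied to the tree. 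This is what makes $\varrho$ well defined without any fixed-point argument and makes the coupling tractable, and it is the structural idea missing from your plan. Relatedly, your ``cascade controlled by the exponential tail of $\rho_k$'' for the bounded-differences step is too vague: the paper's argument hinges on the deterministic observation that altering one tuple can only change the component structure inside the affected components of the all-pairs ``influence graph'' $F_i^{\cI}$, giving a clean Lipschitz bound of order $\ell\cdot L_1(F_i^{\cI})$; without that observation it is not clear the cascade is bounded at all, since rule decisions downstream could in principle change arbitrarily.
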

To interpret this result, we think of the functions $\rho_k(t)$ as describing 
the `scaling limit' of the component size distribution at `time' $t<\tcx$, where 
time is the number of steps divided by $n$. A key aspect of the result is 
that this limit does not depend on $n$; in 
fact, most of our technical work is devoted to establishing this property -- 
to show only that $N_k(G_{tn}^{\cR})$ is concentrated around its expectation, 
simpler arguments would suffice. 
The tail bound on $\rho_k$ given in Theorem~\ref{thm:main} 
states that the idealized component size distribution has an exponential tail 
for $t<\tcx$, as one would expect in a strictly sub-critical random graph. 
It implies that 
$s(t)=\sum_{k \ge 1} k \rho_k(t) < \infty$ if $t < \tcx$, so \eqref{eq:S:C:rho} 
implies that that for $t < \tcx$ we have 
\begin{equation}\label{Sbelow}
 S(G_{tn}^{\cR}) \pto s(t)<\infty,
\end{equation}
where $\pto$ denotes convergence in probability. 
The proof of Theorem~\ref{thm:main} will show that $s(\tcx-\eps)\ge (\ell(\ell-1)\eps)^{-1}$, 
so the (idealized) susceptibility $s(t)$ blows up at $\tcx$. The last 
statement of the theorem implies that for $t<\tcx$ we have $L_1(G_{tn}^{\cR})\le B_t\log n$ 
whp, for some constant $B_t$ that depends on $t$. Finally, we shall 
show in the Appendix that (unless $\cR$ directly adds cycles to the graph), 
for $t<\tcx$ whp almost all components are trees, with the rest unicyclic. 

Theorem~\ref{thm:main} allows us to say something about what happens at time $t=\tcx$. 
Indeed, the definition of an $\ell$-vertex rule ensures that in one step, at most $\ell$ 
components are destroyed and at most $\ell$ (in fact at most $\ell/2$) are created, 
so $|N_k(G_{i+1}^{\cR})-N_k(G_i^{\cR})|\le \ell k$. It follows that each $\rho_k$ 
is Lipschitz continuous on $[0,\tcx)$ with constant $k\ell$. Hence we can 
extend each $\rho_k$ continuously to the point $\tcx$, and \eqref{eq:Nk:C:rho} 
and the Lipschitz properties of $N_k$ and $\rho_k$ imply that 
\begin{equation}\label{Nktcx}
 N_k(G_{\tcx n}^{\cR})/n \pto \rho_k(\tcx). 
\end{equation}
Together with Theorem~\ref{thm:main}, the continuity results of 
\cite{AAP2011,Science2011} imply that $L_1(G_{\tcx n})/n\pto 0$, 
and that $\sum_k \rho_k(\tcx)=1$, 
so the numbers $(\rho_k(\tcx))_{k\ge1}$ do capture the asymptotic component size 
distribution of $G_{\tcx n}^{\cR}$, although we do not have such tight 
error bounds as in \eqref{eq:Nk:C:rho}. 

The proof of Theorem~\ref{thm:main} is based on a variant of the neighbourhood 
exploration process and relies on branching process (approximation) arguments. 
This is quite different from previous approaches in this area, which are based 
on the differential equation method; for certain (restricted) classes of rules 
these establish \emph{local convergence}, i.e., that there exist functions 
$\rho_k=\rho_k^{\cR}:\Rp\to [0,1]$ such that, for each fixed $k\ge 1$ and 
$t\ge 0$, we have $N_{k}(G_{tn}^{\cR})/n \pto \rho_k(t)$ as $n\to\infty$.
The limitations of these 
approaches are that they (i) only apply to certain bounded-size 
rules~\cite{BohmanKravitz2006,SpencerWormald2007}, or (ii) when applied to size 
rules need the additional assumption that certain systems of differential 
equations have unique solutions~\cite{RW2011}, which is not known to hold for 
the product rule, for example. So, Theorem~\ref{thm:main} establishes for the 
first time (a strong form of) local convergence for unbounded size rules (such 
as the product rule) until the susceptibility diverges. 
We believe that this convergence result is best possible: on the basis of 
heuristics and simulations presented in~\cite{arxiv} we believe that there 
are certain natural size rules for which beyond $t=\tcx$ a giant component 
emerges whose size is not concentrated. In these rules the numbers of vertices 
in components of each fixed size $k$ are presumably also not concentrated after 
this point.

Theorem~\ref{thm:main} has some analogies with `classical' percolation theory 
on, for example, the infinite lattice $\ZZ^d$, where there are two \emph{a 
priori} different critical probabilities $p_\mathrm{H}$ and $p_\mathrm{T}$. 
Intuitively, these correspond to the thresholds for (i) having (with positive 
probability) an infinite cluster and (ii) the expected cluster size being 
infinite. For essentially all `natural' lattices of interest it is nowadays 
known that $p_\mathrm{H}=p_\mathrm{T}$ (see e.g.~\cite{AB1987,Menshikov1986}), 
but this fact is not at all obvious! Note that in the finite setting of this 
paper these two properties correspond to (i) having a linear size component, 
and (ii) diverging susceptibility. 
More formally, define $\tc=\tc^{\cR}$ as the supremum of the set of $t \ge 0$ 
for which $L_1(G_{tn}^{\cR})/n \pto 0$ as $n \to \infty$, and $\tcx=\tcx^{\cR}$ 
as the supremum of the set of $t \ge 0$ for which $S(G_{tn}^{\cR})$ is bounded in 
probability. Note that $L_1(G) \le \sqrt{n S(G)}$ implies $\tcx \le \tc$.
The remark after 
Theorem~\ref{thm:main} entails that for size rules $\tcx$ is equal to the 
infimum of the set of $t \ge 0$ for which \eqref{eq:S2:infty} holds, and that 
$S(G_{\tcx n}^{\cR}) \pto \infty$. In fact, we believe that both thresholds 
coincide for size rules (analogous to the `classical' case).
\begin{conjecture}\label{conj}
Let $\ell \ge 2$ and let $\cR$ be an $\ell$-vertex size rule. 
Then $\tcx^{\cR}=\tc^{\cR}$. More precisely, for any $t > \tcx^{\cR}$ and 
$\eps > 0$ there exist $\delta,n_0 > 0$ (depending only on $\cR,\ell,t,\eps$)
such that $\Pr(L_1(G_{tn}^{\cR}) \ge \delta n) \ge 1-\eps$ for $n \ge n_0$. 
\end{conjecture}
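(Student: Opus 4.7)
The plan is to combine the subcritical control of Theorem~\ref{thm:main} with a sprinkling (two-round exposure) argument, driven by multiplicative coalescent intuition. Since $s(\tcx^-) = \infty$ while $\sum_k \rho_k(\tcx) = 1$, the limiting component distribution at $\tcx$ has total mass one but infinite mean; any reasonable coalescent on such a state should gel into a linear-size cluster in any positive time, so the task is to force gelation to happen even against an adversarial size rule.

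First I would make the ``heavy tail'' quantitative. Combining $\rho_k \le A e^{-ak}$ with $\sum_k \rho_k(t) = 1$ and $s(\tcx - \delta) \ge (\ell(\ell-1)\delta)^{-1}$ shows the decay rate $a = a(\delta) \to 0$ as $\delta \to 0$, so for any threshold $M$ one wishes (chosen as a function of $\eps = t - \tcx$ and $\cR, \ell$) there exist $\eta > 0$ and $\delta > 0$ such that at time $\tcx - \delta$ at least an $\eta$-fraction of the vertices lie in components of size $\ge M$ whp; call such components \emph{heavy}. I would then run the process for the further $(\eps+\delta)n$ steps up to time $t$, calling a step \emph{forced} if both offered edges $e_1, e_2$ have endpoints in two \emph{distinct} heavy components. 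In a forced step, whatever the rule chooses, it merges two distinct heavy components. Since the heavy-component mass can only grow during the sprinkling, the probability of a step being forced stays $\ge \eta^4/2$ throughout, producing $\Omega(\eps \eta^4 n)$ forced mergers whp.

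Viewing the forced mergers as edges of an auxiliary weighted multigraph $H$ on the heavy components (with weights equal to their sizes), each edge is drawn with probability proportional to the product of the weights of its endpoints, exactly as in the multiplicative coalescent. With $|V(H)| \le \eta n/M$, total weight $\ge \eta n$, and $\Omega(\eps \eta^4 n)$ edges, taking $M$ large enough (equivalently, $\eta$ small but still a constant depending on $\cR,\ell,t$) drives the edge-to-vertex ratio in $H$ above any chosen threshold, and a weighted Erd\H os--R\'enyi / gelation argument produces a cluster in $H$ of total weight $\ge \delta'(\cR,\ell,t,\eps)\, n$, hence a component of $G_{tn}^{\cR}$ of size $\ge \delta' n$ with probability bounded below. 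To upgrade from positive probability to whp I would use that $L_1$ changes by at most $\ell$ per step, so Azuma--Hoeffding along the edge-exposure filtration (applied to a bounded truncation of $L_1$) gives concentration around its mean, and the lower bound forces this mean to be $\Omega(n)$.

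The main obstacle is the adversarial freedom of the rule: whenever one offered edge merges two small components, the rule may pick it and stall progress, so tangible merging arises only on the $\approx \eta^4$ fraction of truly forced steps. Showing that these forced mergers alone suffice to gel, and that the rule cannot degrade the heavy-component pool faster than the sprinkling grows it, is the crux; making the multiplicative coalescent comparison fully rigorous in the presence of Achlioptas biasing is likely to require extending the exploration/branching process machinery underlying Theorem~\ref{thm:main} to a small interval above $\tcx$, where the approximating branching process is transitioning through criticality and the usual Galton--Watson comparison becomes genuinely delicate.
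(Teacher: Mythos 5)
This statement is Conjecture~\ref{conj}, which the paper leaves \emph{open} in general; the authors prove it only for special classes of rules in Section~\ref{sec:unif} (bounded-size, take-it-or-leave-it, and large-biased rules), and explicitly remark that ``additional ideas are needed for the general case.'' The fatal gap in your argument is the assertion that the forced mergers constitute edges of an auxiliary weighted graph $H$ ``drawn with probability proportional to the product of the weights of its endpoints, exactly as in the multiplicative coalescent.'' In a forced step both offered edges lie between distinct heavy components, but the rule still selects one of the two, and for a general size rule this selection is adversarial --- for instance it can systematically favour the pair whose component sizes sum to the smaller value, as the $r$-sum rule $\cS_r$ does. So $H$ is not a multiplicative coalescent instance; it is \emph{another Achlioptas process on the heavy components}, and whether such a process must gel within $\eps n$ steps is exactly the conjecture you are trying to prove, transplanted one level up. The paper's own evidence that this is genuinely delicate is Lemma~\ref{lem:dg}: the $r$-sum rule preserves polynomial component-size tails for $\Omega(n)$ steps, and Corollaries~\ref{cor:delayI}--\ref{cor:delayC} exploit this to build rules that behave like size rules except for a single switch based on the step count, for which $\tcx < \tc$. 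Your heuristic that ``any reasonable coalescent on such a state should gel'' is precisely what these examples show cannot be taken for granted.

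Where the paper succeeds for its special cases, it does so not by a coalescent comparison but by extracting a subset of the added edges that is \emph{provably uniform}, and then feeding that subset into the inhomogeneous random graph theory of~\cite{BJR} with a rank-1 kernel of infinite norm (using that $s(\tcx)=\infty$, the same heavy-tail fact you use). For bounded-size rules, once all $\ell$ selected vertices lie in components of size $>B$ the rule cannot distinguish them, and symmetry forces the added edge to be uniform over pairs in $W$. For take-it-or-leave-it rules, every edge actually taken from the second list $\cL_2$ is uniform by construction, and the stability result~\cite[Theorem 3.9]{BJR} absorbs the rule's adversarial selections from $\cL_1$. For large-biased rules, whenever $e_1$ is isolated--isolated and $e_2$ is heavy--heavy, the rule must take the uniform $e_2$. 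Your heavy-mass monotonicity and the $\eta^4$ lower bound on forced steps would slot usefully into such a scheme, but without a symmetry or forcing principle that pins down a genuinely uniform edge subset, the coalescent step has no foundation, and the closing Azuma argument cannot supply the missing lower bound on $\E L_1(G^{\cR}_{tn})$.
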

Recall that Achlioptas processes (where the choice is between two edges) are a 
sub-class of $4$-vertex rules. Conjecture~\ref{conj} was proved for bounded-size 
Achlioptas processes by Spencer and Wormald~\cite{SpencerWormald2007}, 
and for a subset of these processes by Bohman and Kravitz~\cite{BohmanKravitz2006}. 
We shall show in Section~\ref{sec:unif} that it holds for all bounded-size 
$\ell$-vertex rules, as well as many other size rules, including the `reverse 
product rule', for example. However, it does not hold for general $\ell$-vertex 
rules. Indeed, in Section~\ref{sec:delay} we show that modified size rules with 
one additional feature, namely that they may once switch their behaviour based 
on the number $n$ of vertices and the number $i$ of steps (or the value of the 
susceptibility), can delay the appearance of a linear size component for 
$\Omega(n)$ steps beyond the point where the susceptibility diverges.

\section{Evolution of Achlioptas processes with an initial graph}\label{sec:evo} 
In this paper we consider the evolution of Achlioptas processes starting with 
an initial graph $F$ with vertex set $V=[n]$; we restrict our attention 
to \emph{$\ell$-vertex size rules $\cR$}, whose decisions depend only on the 
sizes of the components containing the randomly chosen vertices. 
More precisely, each such rule $\cR$ yields a random sequence 
$(F_{i}^{\cR})_{i\ge 0}$ of graphs on $V$ with $F_{0}^{\cR}=F$. For every 
$i\ge 0$ we draw $\ell$ vertices $\uv_{i+1}=(v_{1},\ldots,v_{\ell})$ from $V$ 
independently and uniformly at random, and then, writing 
$\uc_{i+1}=(c_{1},\ldots,c_{\ell})$ for the sizes of the components containing 
$v_{1},\ldots,v_{\ell}$ in $F_{i}^{\cR}$, we obtain obtain $F_{i+1}^{\cR}$ by 
adding a non-empty set of edges $E_{i+1}$ to $F_{i}^{\cR}$, where $\cR$ 
deterministically selects $E_{i+1}$ as a subset of all pairs between vertices 
in $\uv_{i+1}$ based only on the list of component sizes $\uc_{i+1}$. 
Usually exactly one edge is added, but there is no reason to insist on this.

When $F=G_0$ is the empty graph on $n$ vertices we obtain the `standard' 
Achlioptas processes using $\ell$-vertex size rules $\cR$ as defined 
in~\cite{AAP2011}. 
As usual, we can allow for small variations in the above definition; this 
includes, for example, each time picking an $\ell$-tuple of \emph{distinct} 
vertices, or picking (the ends of) $\ell/2$ randomly selected (distinct) 
edges not already present in $G_{i}^{\cR}$, see also~\cite{AAP2011}. 
For $\ell=2$ we thus recover the `classical' Erd\H os--R\'enyi random graph 
process by always adding the pair $v_1v_2$. 
In addition, our proofs can be written to allow $\cR$ to make randomized 
decisions(with the probability of adding some set of edges depending only on 
$\uc_{i+1}$), and, furthermore, to allow $\cR$ to know which vertices in 
$\uv_{i+1}$ are in the same components of $F_{i}^{\cR}$ (for compatibility 
with~\cite{AAP2011} we then require $E_{i+1}\neq \emptyset$ whenever all 
$v_j$ are in distinct components, although nothing in the proof of 
Theorem~\ref{thm:main}, except for the bound $\tcx \le 1$, needs this).

One difficulty in the proof of Theorem~\ref{thm:main} is that there is a 
complicated dependence between the decisions of $\cR$ in each round (and their 
order is also important). Indeed, changes can `propagate' throughout the 
process: if the sizes of a few components are modified (e.g.\ by altering 
decisions of $\cR$ or tuples $\uv=(v_1, \ldots, v_{\ell})$ offered), then this 
might change many future decisions of $\cR$, which in turn might alter further 
decisions, etc. To overcome this our proof proceeds by induction, always 
establishing concentration only for a small number of steps; this is also the 
reason why we study the more general evolution starting from an initial graph 
$F$. Each time we rely on a two-round exposure argument: in the first round we 
reveal which tuples are selected, and in the second we then expose their 
order. For size rules \emph{not} all tuples and components of $F$ `influence' 
the size of the component in $F_{i}^{\cR}$ containing $v$: only those which can 
be reached from $v$ after adding \emph{all} pairs of each $\ell$-tuple to the 
graph (every rule only selects a subset of these pairs). The key observation is 
now that given the corresponding `relevant' tuples and components of $F$ of the 
first round, the order of these tuples (exposed in the second round) determines 
the size of the component containing $v$. It turns out that if we only consider 
$\sigma n$ rounds for $\sigma$ sufficiently small, then an exploration process 
determining these relevant tuples and components in the first round can be 
closely approximated by a subcritical branching process $\bp[{\sigma}]$ which 
is defined \emph{without} reference to $n$. Since the outcome of the second 
round is a (random) function of the first one, it thus seems plausible that 
$\E N_k(F_{\sigma n}^{\cR})/n$ is independent of $n$ (up to small error 
terms). In addition, since the first round is subcritical, this means that 
there typically are not too many tuples and components which influence the size 
of the component containing $v$. At least on an intuitive level this makes it 
plausible that it should be possible to establish concentration of 
$N_k(F_{i}^{\cR})$ around its expectation by applying McDiarmid's inequality.

The rest of this paper is organized as follows. In the next section we 
state our main technical result (Theorem~\ref{thm:evolution}), and then show in 
Section~\ref{sec:proof} how it implies Theorem~\ref{thm:main}. Afterwards, 
in Section~\ref{sec:preliminaries} we present some branching process 
preliminaries; these are used in Section~\ref{sec:proof:thm:evolution}, where 
we establish Theorem~\ref{thm:evolution}.  In Section~\ref{sec:disc} we  
discuss Conjecture~\ref{conj}, giving examples of classes of size rules 
for which we can prove the conjecture, and examples of non-size 
rules for which it does not hold. Finally, in the appendix we 
consider the cycle structure of Achlioptas processes.

\subsection{Main technical result}\label{sec:evolution}
Our main technical result establishes concentration during the evolution of 
Achlioptas processes starting with an initial graph $F$. The special case of an 
Erd\H os--R\'enyi evolution from an initial graph $F$ (which can be seen as an 
evolving version of a special case of the inhomogeneous random graph model of 
Bollob\'as, Janson and Riordan~\cite{BJR}) has been previously studied by 
Spencer and Wormald~\cite{SpencerWormald2007} and Janson and 
Spencer~\cite{JansonSpencer2010}, the main focus being on the size of the 
largest component. In this context the susceptibility turns out to be the key 
parameter, and both papers use in essential ways that the Erd\H os--R\'enyi 
evolution corresponds to the addition of \emph{uniform} random edges (or 
pairs of vertices). In contrast, when studying the evolution of Achlioptas 
processes, we need to deal with intricate dependencies between the edges added.

Using susceptibility as a guide, we now briefly motivate the number of steps 
our result applies to. Suppose that, starting with $F$ satisfying $S(F) = L$, 
we use the rule $\cI$ which in each step joins all $\ell$ random vertices by 
edges. Set $s(t) = S(F_{tn}^{\cI})$. If the sizes of the joined components are 
$c_i$, then (assuming that all components are distinct) the susceptibility 
changes by $(\sum c_i)^2/n-\sum c_i^2/n=\sum_{i \neq j} c_i c_j/n$. So, since 
the vertices of each tuple are chosen uniformly at random, it seems plausible 
that typicality we have 
$s'(t) \approx n \E(S(F_{tn+1}^{\cI})-S(F_{tn}^{\cI})) \approx \ell(\ell-1) s(t)^2$. 
For $t < [\ell(\ell-1)L]^{-1}= \tcs$ this suggests 
$s(t) \approx [1/L-\ell(\ell-1)t]^{-1}$. Since in each step any rule $\cR$ only 
adds a subset of all $\binom{\ell}{2}$ pairs to the graph, this indicates that 
the susceptibility does not `blow up' as long as $t < \tcs$. The following 
result confirms this heuristic argument and shows that, under suitable 
conditions, for $t < \tcs$ the number of vertices in components of size 
$k \ge 1$ is also tightly concentrated (the function $\varrho$ intuitively 
results from an `infinite' version of the rule $\cR$). Here we set 
$\chi(\varphi) = \sum_{k \ge 1} k \varphi(k)$ and 
$\chi(\varrho,t) = \sum_{k \ge 1} k \varrho(k,t)$, 
and write $x=a \pm b$ as shorthand for $x \in [a-b,a+b]$. 
\begin{theorem}\label{thm:evolution}
Let $\ell \ge 2$ and let $\cR$ be an $\ell$-vertex size rule. 
Suppose $\beta > 1$, $B > 0$, $L \ge 1$ and $\varphi:\NN \to [0,1]$ satisfy
\begin{gather} 
\label{eq:varphi:technical}
{\textstyle \sum_{k \ge 1}} \varphi(k) = 1,\\
\label{eq:varphi:tailsum}
{\textstyle \sum_{k \ge 1}} \varphi(k) \beta^k \le B,\\
\label{eq:varphi:chi}
\chi(\varphi) \le L. 
\end{gather} 
There is a function $\varrho:\NN\times\Rp\to[0,1]$ (depending only on 
$\varphi,\cR,\ell$) such that for all $\sigma \ge 0$ satisfying 
\begin{equation} 
\label{eq:Nk:sigma}
\sigma < [\ell(\ell-1)L]^{-1}
\end{equation} 
there exist $\tilde{\beta},\tilde{B},\tilde{L} > 1$ (depending only on $\ell,L,\sigma,\beta,B$) 
such that for every $t \in [0,\sigma]$ equations 
\eqref{eq:varphi:technical}--\eqref{eq:varphi:chi} hold when 
$\beta,B,L,\varphi(\cdot)$ are replaced by 
$\tilde{\beta},\tilde{B},\tilde{L},\varrho(\cdot,t)$. 
If in addition $F$ is a graph on $n$ vertices which for $C \ge 0$ satisfies 
\begin{gather} 
\label{eq:Nk:approx}
N_{k}(F) = \varphi(k) n \pm (\log n)^C n^{1/2} \quad \text{for all $k \ge 1$},\\
\label{eq:NK:tailsum}
{\textstyle \sum_{k \in [n]}} N_{k}(F) \beta^j \le B n, \\
\label{eq:NK:chi}
S(F) \le L, 
\end{gather} 
then, setting $\tilde{C}=C+9$, for $n \ge n_0(\ell,L,\sigma,\beta,B,C)$ the 
following holds with probability at least $1-n^{-200}$: for every 
$0 \le i \le \sigma n$ we have 
\begin{equation}\label{eq:S2}
S(F_{i}^{\cR}) = \chi(\varrho,i/n) \pm (\log n)^{\tilde{C}} n^{-1/2}, 
\end{equation} 
and equations \eqref{eq:Nk:approx}--\eqref{eq:NK:chi} hold when 
$\beta,B,L,C,F,\varphi(\cdot)$ are replaced by 
$\tilde{\beta},\tilde{B},\tilde{L},\tilde{C},F_{i}^{\cR},\varrho(\cdot,i/n)$. 
\end{theorem}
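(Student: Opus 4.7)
The plan is to run the process via a two-round exposure: first reveal the sequence of $\ell$-tuples $\uv_1,\ldots,\uv_{\sigma n}$, and only then expose a uniformly random order in which they are applied while the rule $\cR$ acts. The key observation is that the component of a fixed vertex $v$ in $F_i^{\cR}$ depends only on the \emph{relevant} tuples for $v$, namely those reachable from $v$ in the auxiliary graph $\tilde F_i$ obtained by adding all $\binom{\ell}{2}$ pairs from each offered tuple to $F$. The exploration of the relevant tuples will be closely approximated by a branching process that does not depend on $n$, which is the reason the limit function $\varrho$ exists.

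For the idealized branching process, take a tree whose root is a component of size drawn from $\varphi$; a component of size $c$ spawns $\Po(\ell(\ell-1)\sigma c)$ children which are tuples (reflecting that over $\sigma n$ steps each tuple hits a fixed size-$c$ set with probability $\sim\ell c/n$ and then contributes $\ell-1$ edges to other components), and each tuple has $\ell-1$ children whose sizes are drawn i.i.d.\ from the size-biased law $k\varphi(k)/L$. A direct computation gives expected total mass $L\bigl(1-\ell(\ell-1)\sigma L\bigr)^{-1}$, so the process is subcritical iff \eqref{eq:Nk:sigma} holds; combining this with \eqref{eq:varphi:tailsum} yields uniform exponential moments on the total size. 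Define $\varrho(k,t)$ to be the probability that, after revealing a uniformly random order of the tuples in the parameter-$t$ version of this process and letting $\cR$ act, the root ends up in a component of size exactly $k$. The required bounds on $\varrho(\cdot,t)$ with $\tilde\beta,\tilde B,\tilde L$ depending only on $\ell,L,\sigma,\beta,B$ follow from these exponential moments, uniformly for $t\in[0,\sigma]$.

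Given $F$ satisfying \eqref{eq:Nk:approx}--\eqref{eq:NK:chi}, fix $v$ and grow the relevant-tuple exploration around $v$, stopping once it exceeds $(\log n)^{C'}$ vertices. While the tree stays small, coupling with the idealized process succeeds with total variation $O(n^{-100})$: the approximations of hypergeometric sampling by Poisson, of $N_k(F)/n$ by $\varphi(k)$, and of multiply-hit tuples by single-hit ones are all negligible. Revealing the order of the relevant tuples and applying $\cR$ yields the true component of $v$ in $F_i^{\cR}$, so averaging over $v$ gives $\E N_k(F_i^{\cR})=\varrho(k,i/n)n\pm(\log n)^{C'}n^{1/2}$. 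For concentration, view $N_k(F_i^{\cR})$ as a function of the independent tuples $\uv_1,\ldots,\uv_i$ together with their independent uniform order; by subcriticality, with probability $1-n^{-C}$ for any fixed $C$, resampling a single tuple alters the components of at most $(\log n)^{C''}$ vertices, hence changes $N_k$ by at most $(\log n)^{C''}$. A high-probability-Lipschitz version of McDiarmid's inequality then yields the $(\log n)^{\tilde C}n^{1/2}$ fluctuation, and a union bound over $i\le\sigma n$ and over $k$ (truncating $k$ at $O(\log n)$ using the exponential tail of $\varrho$) delivers \eqref{eq:S2} and the propagation of \eqref{eq:NK:tailsum}--\eqref{eq:NK:chi}.

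The main obstacle is the concentration step: size rules carry long-range dependencies, since a single perturbed tuple may alter some component sizes, which may change a later rule decision, which may further perturb components, and so on. It is precisely the two-round exposure, together with the subcriticality $\sigma\ell(\ell-1)L<1$ of the exploration on the worst-case auxiliary graph $\tilde F_i$, that confines this cascade to a logarithmic set of vertices and restores the bounded-differences regime in which McDiarmid applies.
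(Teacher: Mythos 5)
Your overall plan coincides with the paper's: two-round exposure (reveal tuples, then their order), approximation of the neighbourhood exploration in the auxiliary `all-pairs-added' graph by an $n$-independent branching process, the definition of $\varrho(k,t)$ as the distribution of the root's final component size when $\cR$ acts on a random ordering of the revealed tuples, and concentration via a McDiarmid-type argument exploiting that changing one tuple only propagates within influence components. However, your sketch contains several concrete errors in the parts you do spell out.

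First, your idealized branching process is mis-specified. In your rule, a component of size $c$ has $\Po(\ell(\ell-1)\sigma c)$ tuple-children, each of which spawns $\ell-1$ new components; the parenthetical justification explains the $\ell c/n$ hitting probability and then the $\ell-1$ per-tuple multiplicity, but you have multiplied the $\ell-1$ into the Poisson rate \emph{and} again into the per-tuple branching, double-counting it. The rate should be $\Po(\ell\sigma c)$ tuples, each contributing $\ell-1$ child components. Second, the law of those child components is wrong: you give it as the `size-biased law $k\varphi(k)/L$', but $\varphi$ is already in the vertex perspective ($\varphi(k)\approx N_k(F)/n$ is the probability that a uniform random vertex lies in a size-$k$ component), so the component containing a fresh random vertex of the tuple has size distributed exactly according to $\varphi$, not its size-biased version. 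With the corrected rate and law, the expected number of grandchild vertex-nodes per vertex-node is $\ell t(\ell-1)\chi(\varphi)\le\ell\sigma(\ell-1)L<1$, which is what yields subcriticality under \eqref{eq:Nk:sigma}; your version gives a different and incorrect criticality condition.

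Third, and most substantively, the claim that the coupling succeeds ``with total variation $O(n^{-100})$'' is wrong and reflects a misunderstanding of where the error bound in the theorem comes from. The dominant source of error is precisely the discrepancy between $N_k(F)/n$ and $\varphi(k)$, which by \eqref{eq:Nk:approx} is $(\log n)^C n^{-1/2}$ per component. Since the exploration typically visits $\Theta(\operatorname{polylog} n)$ components, the coupling failure probability is of order $(\log n)^{C'} n^{-1/2}$, and it is exactly this that produces the $(\log n)^{\tilde C}n^{-1/2}$ additive error in \eqref{eq:S2} and in the propagated form of \eqref{eq:Nk:approx}. If the coupling really failed only with probability $n^{-100}$, the theorem would be giving vastly sharper bounds than it states. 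Finally, for the concentration step, the paper avoids the need for a `high-probability-Lipschitz' variant of McDiarmid by a cleaner device: it works with the truncated statistic counting only vertices whose \emph{influence} component (under the rule that adds all $\binom\ell 2$ pairs in every step) has size below a threshold $U$, which has deterministically bounded differences $O(\ell U)$, and then observes that on a high-probability event this truncated statistic equals $N_k$. Your invocation of a typical-bounded-differences inequality is a legitimate alternative, but in a sketch at this level of detail it should be stated which such inequality is used and verified that the polylogarithmic Lipschitz bound holds with the necessary failure probability.
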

The proof of Theorem~\ref{thm:evolution} is quite involved and is deferred to 
Section~\ref{sec:proof:thm:evolution}. It is useful to observe that since 
$\beta >1$ holds, \eqref{eq:varphi:tailsum} and \eqref{eq:NK:tailsum} imply the 
tail bounds $\max\{\sum_{j \ge k}\varphi(j),N_{\ge k}(F)/n\} \le B \beta^{-k}$ 
for all $k \ge 1$, so $L_1(F)=O(\log n)$. By Theorem~\ref{thm:evolution} 
analogous estimates also hold for $F_{i}^{\cR}$ with $i \le \sigma n$. In fact, 
for \eqref{eq:NK:tailsum}, \eqref{eq:NK:chi} to hold with $\beta,B,L,F$ replaced 
by $\tilde{\beta},\tilde{B},\tilde{L},F_{i}^{\cR}$, a minor modification of our 
proof shows that it suffices to assume \eqref{eq:Nk:sigma}, 
\eqref{eq:NK:tailsum} and \eqref{eq:NK:chi} only; for the special case $\ell=2$ 
this was established by Spencer and Wormald~\cite{SpencerWormald2007} under 
similar conditions. However, the key point of Theorem~\ref{thm:evolution} is 
\eqref{eq:Nk:approx}, i.e., that we obtain concentration of number of vertices 
in components of size $k$.

Turning to the susceptibility, by combining \eqref{eq:Nk:approx} with the tail 
bounds following from \eqref{eq:varphi:tailsum} and \eqref{eq:NK:tailsum}, for 
each fixed $j$ we readily obtain rather precise estimates for 
$S_j(F_{i}^{\cR})=\sum_{k \in [n]}k^{j}N_{k}(F_{i}^{\cR})/n$ with 
$i \le \sigma n$, similar to \eqref{eq:S2}. Furthermore, since 
$L_1(F_{\sigma n}^{\cR})=O(\log n)$ whp, we can easily use the differential 
equation method to make our heuristic discussion regarding the susceptibility 
rigorous, which e.g.\ yields 
$\chi(\varrho,\sigma) \le [1/L-\ell(\ell-1)\sigma]^{-1}$ (for the special case 
$\ell=2$ this was noted by Bohman et.~al~\cite{BFKLS2011}; it is also implicit 
in~\cite{SpencerWormald2007}). However, this crude bound, which follows from 
always connecting all $\ell$ vertices by edges in each step, is generally far 
from the truth; for this reason it does not suffice in our inductive 
application of Theorem~\ref{thm:evolution}, where we use the `correct' value 
given by \eqref{eq:S2}.

\subsection{Proof of Theorem~\ref{thm:main}}\label{sec:proof}
This section is devoted to the proof of Theorem~\ref{thm:main}, which we 
establish by an inductive application of Theorem~\ref{thm:evolution}: each time 
we show concentration during a small number of steps (and maintain certain 
technical conditions), where the lengths of these intervals decrease as the 
susceptibility increases. This is also the main idea of the following rather 
technical construction: as we shall see in the proof of Lemma~\ref{lem:main}, 
for each interval of length $\Delta_j$ it determines the scaling limits 
$\rho_k$ (and certain tail bounds) in a way that does \emph{not} depend on $n$.

We inductively define a sequence $(\beta_j,B_j,\varrho_j,\Delta_j,L_j)_{j \ge 0}$ 
with $\beta_j > 1$, $B_j>0$, $\Delta_j \ge 0$, $L_j \ge 1$ and 
$\varrho_j:\NN\times\Rp \to [0,1]$, where the $\beta_j$ are decreasing 
($\beta_{j+1} \le \beta_j$) and the $B_j$ are increasing ($B_{j+1} \ge B_j$). 
In addition, for each $j \ge 0$ the sequence satisfies the following 
invariant: for every $t \in [0,\Delta_j]$ equations 
\eqref{eq:varphi:technical} and \eqref{eq:varphi:tailsum} hold with 
$\beta,B,\varphi(\cdot)$ replaced by $\beta_j,B_j,\varrho_j(\cdot,t)$. 
We start by setting $\beta_0=B_0=L_0=2$, $\Delta_0=0$ and defining 
$\varrho_0:\NN\times\Rp \to [0,1]$ with $\varrho_0(1,0)=1$ and 
$\varrho_0(k,t)=0$ otherwise. Given $j \ge 1$, recall that 
$\chi(\varrho_{j-1},t) = \sum_{k \ge 1} k \varrho_{j-1}(k,t)$ and set 
\begin{equation}\label{eq:tjp1}
L_j=\chi(\varrho_{j-1},\Delta_{j-1})+1 \quad \text{and} \quad \Delta_{j}=[\ell(\ell-1)\left(L_j+1\right)]^{-1} .
\end{equation}
Applying the first part of Theorem~\ref{thm:evolution} with $\beta=\beta_{j-1}$, 
$B=B_{j-1}$, $L=L_j$ and $\varphi(k)=\varrho_{j-1}(k,\Delta_{j-1})$, we use the 
resulting $\varrho$ to define $\varrho_{j}=\varrho$. Furthermore, by 
considering $\sigma=\Delta_{j}$ we obtain $\tilde{\beta},\tilde{B}$ and set 
$\beta_{j}=\min\{\tilde{\beta},\beta_{j-1}\}$ and 
$B_{j}=\max\{\tilde{B},B_{j-1}\}$; by Theorem~\ref{thm:evolution} these satisfy 
the required invariant. Furthermore, it is not difficult to see that the entire 
sequence $(\beta_j,B_j,\varrho_j,\Delta_j,L_j)_{j \ge 0}$ depends only on 
$\cR,\ell$.

Next we combine the $\varrho_j$ (each valid on an interval of length 
$\Delta_j$) to form $\varphi(k,t)$, which will eventually be $\rho_k(t)$ in 
Theorem~\ref{thm:main}; this notation avoids confusion with the $\varrho_j$ 
used. For $t \ge 0$ we define $r_t$ as the smallest $r$ such that 
$t \le \sum_{0 \le j \le r} \Delta_j$ and set $r=\infty$ if no such $r$ 
exists. For all $(k,t) \in \NN \times\Rp$ set 
\begin{equation}\label{def:varphi}
\varphi(k,t)= \begin{cases}
		\varrho_{r_t}(k,t-\sum_{0 \le j < r_t} \Delta_j), & ~~\text{if $r_t < \infty$},\\
		0, & ~~\text{otherwise}.
	\end{cases}
\end{equation}
Transferring this definition to the invariant of the sequence introduced above, 
for all $t \ge 0$ with $r_t < \infty$ it follows that 
\begin{equation} 
\label{eq:Nk:varphi:ind:technical}
{\textstyle\sum_{k \ge 1}} \varphi(k,t) = 1 ,
\end{equation} 
and that for every $t' \in [0,t]$ we have 
\begin{equation} 
\label{eq:Nk:varphi:ind:tailsum}
{\textstyle\sum_{k \ge 1}} \varphi(k,t') \beta_{r_t}^k \le B_{r_t}. 
\end{equation}

Now we are ready to prove the following concentration result, which also 
implies that in the previous construction we always have $r_t < \infty$ if 
\eqref{eq:S2:infty} fails. 
\begin{lemma}\label{lem:main}
Let $\ell \ge 2$ and let $\cR$ be an $\ell$-vertex size rule. For every 
$t \ge 0$ for which \eqref{eq:S2:infty} fails we have $r_t < \infty$, and there 
exist $a,A,C>0$ (depending only on $\cR,\ell,t$) such that the following holds 
for $n \ge n_0(\cR,\ell,t)$ with probability at least $1-n^{-99}$: for every 
$0 \le i \le tn$ we have 
\begin{gather*}
\label{eq:Nk:C:rho:lem}
N_{k}(G_{i}^{\cR}) = \varphi(k,i/n)n \pm (\log n)^{C}n^{1/2} \quad \text{for all $k \ge 1$}, \\
\label{eq:S:C:rho:lem}
S(G_{i}^{\cR}) = {\textstyle\sum_{k \ge 1}}k \varphi(k,i/n) \pm (\log n)^{C}n^{-1/2}, 
\end{gather*}
and $N_{\ge k}(G_{i}^{\cR}) \le Ae^{-ak}n$ for all $k \ge 1$. 
\end{lemma}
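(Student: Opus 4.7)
\textbf{Proof proposal for Lemma~\ref{lem:main}.}

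The natural plan is a finite iteration of Theorem~\ref{thm:evolution} along the successive intervals $[T_{j-1},T_j]$, where $T_r=\sum_{0\le j\le r}\Delta_j$. I would prove by induction on $r\ge 0$ the claim $P(r)$: there exist constants $C_r,a_r,A_r>0$ (depending only on $\cR,\ell,r$) such that for $n\ge n_0(\cR,\ell,r)$, with probability at least $1-(r+1)n^{-200}$, the three displayed bounds of the lemma hold (with $C,a,A$ replaced by $C_r,a_r,A_r$) for every $0\le i\le T_r n$. The base case $r=0$ is immediate since $T_0=0$, $G_0^{\cR}$ is empty, and $\varphi(\cdot,0)=\varrho_0(\cdot,0)$ is the point mass at $k=1$.

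For the inductive step, I would apply Theorem~\ref{thm:evolution} to $F=G_{T_{r-1}n}^{\cR}$ with parameters $\beta=\beta_{r-1}$, $B=B_{r-1}$, $L=L_r$, $\sigma=\Delta_r$, initial profile $\varphi(k)=\varrho_{r-1}(k,\Delta_{r-1})$, and $C=C_{r-1}$. The invariant of the construction in Section~\ref{sec:proof} supplies \eqref{eq:varphi:technical} and \eqref{eq:varphi:tailsum}; $\chi(\varphi)=L_r-1\le L_r$ gives \eqref{eq:varphi:chi}; and $\Delta_r=[\ell(\ell-1)(L_r+1)]^{-1}$ gives \eqref{eq:Nk:sigma}. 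The hypothesis $P(r-1)$ at $i=T_{r-1}n$ yields \eqref{eq:Nk:approx} and (via the exponential tail) \eqref{eq:NK:tailsum}, while the susceptibility part of $P(r-1)$ forces $S(F)\le\chi(\varrho_{r-1},\Delta_{r-1})+(\log n)^{C_{r-1}}n^{-1/2}\le L_r$ for $n$ large, giving \eqref{eq:NK:chi}. A crucial consistency point I would make explicit is that the function $\varrho$ returned by Theorem~\ref{thm:evolution} depends only on $\varphi,\cR,\ell$, so it coincides with the $\varrho_r$ fixed in the construction \eqref{def:varphi}; consequently on $[T_{r-1}n,T_rn]$ its approximants are literally $\varphi(k,i/n)$. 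Setting $C_r=C_{r-1}+9$ and extracting $A_r,a_r$ from \eqref{eq:NK:tailsum} with $\beta_r>1$, concatenation with $P(r-1)$ on $[0,T_{r-1}n]$ completes $P(r)$; the cumulative failure $(r+1)n^{-200}$ is $\le n^{-99}$ for $n\ge n_0(\cR,\ell,t)$ because $r_t$ is finite and fixed.

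If $r_t<\infty$, then $P(r_t)$ yields the lemma with constants depending only on $\cR,\ell,t$. It remains to deduce $r_t<\infty$ whenever \eqref{eq:S2:infty} fails; I would argue the contrapositive. Suppose $r_t=\infty$, so $T_r<t$ for all $r$ and hence $T_\infty:=\sum_j\Delta_j\le t$. Since $\Delta_j=[\ell(\ell-1)(L_j+1)]^{-1}$, a bounded sequence $\{L_j\}$ would force $\sum\Delta_j=\infty$, a contradiction; thus $L_j\to\infty$, i.e., $\chi(\varrho_{j-1},\Delta_{j-1})\to\infty$. For any $M>0$, pick $r$ with $L_r>M+1$; by $P(r-1)$, whp $S(G_{T_{r-1}n}^{\cR})\ge L_r-1-o(1)>M$. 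Adding an edge either leaves $S$ unchanged or increases it by $2c_1c_2/n\ge 0$, so $i\mapsto S(G_i^{\cR})$ is monotone non-decreasing; since $T_{r-1}<t$, this yields $S(G_{tn}^{\cR})>M$ whp, and hence $S(G_{tn}^{\cR})\pto\infty$, contradicting the assumed failure of \eqref{eq:S2:infty}.

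The main obstacle here is essentially bookkeeping rather than analysis: the substantive work is already absorbed into Theorem~\ref{thm:evolution}, and the points that actually need care are the consistency of the piecewise $\varphi(k,t)$ from \eqref{def:varphi} with what each inductive application of the theorem tracks, the verification that $P(r-1)$ supplies every hypothesis (especially \eqref{eq:NK:chi}) needed at the next step, and the control that the constants $C_r,A_r,a_r$ and the $n_0$ threshold depend only on $r$ (hence on $t$), not on $n$. The one mildly non-trivial ingredient outside the induction is the monotonicity of $S(G_i^{\cR})$ used in deducing $r_t<\infty$.
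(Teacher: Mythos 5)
Your proof is correct, and the overall engine is identical to the paper's: both iterate Theorem~\ref{thm:evolution} along the intervals of length $\Delta_j$ determined by the construction before the lemma, feeding the output of one application as the input to the next, with $C_j = C_{j-1}+9$ and the crucial observation that $\varrho$ from Theorem~\ref{thm:evolution} matches the pre-computed $\varrho_j$ so that the limit functions are $n$-free. Where you genuinely diverge from the paper is in establishing $r_t<\infty$. The paper front-loads the hypothesis: from the assumed failure of \eqref{eq:S2:infty} it extracts $\eps>0$ with $\Pr(S(G_{t\bar n}^{\cR})\le\eps^{-1})\ge\eps$ along a subsequence, sets $\overline{L}=\eps^{-1}+3$ and $K=\ceil{t\ell(\ell-1)\overline{L}}+1$, and adds to the induction hypothesis the extra invariant $\chi(\varphi,t_s)<\overline{L}-2$ (verified by intersecting the high-probability concentration event with the positive-probability event $S\le\eps^{-1}$ and using monotonicity of $S$); this bound on $\chi$ lower-bounds each $\Delta_j$ and forces $t_K\ge t$ after at most $K$ rounds. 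You instead run the induction unconditionally (no $\chi$ invariant), and deduce $r_t<\infty$ by contrapositive: if $r_t=\infty$ then $\sum_j\Delta_j\le t<\infty$, so $\Delta_j\to 0$, so $L_j\to\infty$; then for any $M$ one application of $P(r-1)$ with $L_r>M+1$ plus monotonicity of $S$ forces $S(G_{tn}^{\cR})\pto\infty$, contradicting the assumed failure of \eqref{eq:S2:infty}. Your argument is arguably cleaner, since it avoids importing $\eps$ and $\overline{L}$ into the induction; the paper's version has the mild advantage of making the number of iterations $K$, and hence the constants, explicit in $\eps$, but both yield constants depending only on $\cR,\ell,t$ as required. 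One small point worth spelling out if you polish this: you should note (as the paper does implicitly) that $\chi(\varrho_{r-1},\Delta_{r-1})$ is finite at every stage, which follows from the exponential tail of $\varrho_{r-1}$ guaranteed by the first part of Theorem~\ref{thm:evolution}, so that $L_r<\infty$ and $\Delta_r>0$ and the induction never stalls.
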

\begin{proof}
Given $t \ge 0$, if \eqref{eq:S2:infty} fails there exists $\eps>0$ and an 
infinite subsequence $\bar{n}$ of $\NN$ (depending only on $\cR,\ell,t$) 
satisfying 
\begin{equation} 
\label{eq:S2:bound}
\Pr(S(G_{t \bar{n}}^{\cR}) \le \eps^{-1}) \ge \eps .
\end{equation}
Let $\overline{L}=\eps^{-1}+3$ and $K=\ceil{t\ell(\ell-1)\overline{L}}+1$. Let 
$t_0=0$, and for $j \ge 1$ let 
\begin{equation*}\label{def:t}
t_{j}= \begin{cases}
		t_{j-1}, & ~~\text{if $t_{j-1} > t$},\\
		t_{j-1}+\Delta_j, & ~~\text{otherwise}.
	\end{cases}
\end{equation*}
For $n \ge n_0(\cR,\ell,t)$ we inductively show that for every $0 \le j \le K$, 
setting $C_j=9j+2$, with probability at least $1-j n^{-200}$, for every 
$0 \le i \le t_j n$ we have 
\begin{gather} 
\label{eq:Nk:approx:ind}
N_{k}(G_i^{\cR}) = \varphi(k,i/n) n \pm (\log n)^{C_j} n^{1/2} \quad \text{for all $k \ge 1$,}\\
\label{eq:tailsum:ind}
{\textstyle\sum_{k \in [n]}} N_{k}(G_i^{\cR}) \beta_j^k \le B_j n ,\\
\label{eq:S2:ind}
S(G_{i}^{\cR}) = \chi(\varphi,i/n) \pm (\log n)^{C_j} n^{-1/2} ,
\end{gather}
and for every $0 \le s \le j$ with $t_s \le t$ we have 
\begin{gather} 
\label{eq:chi:ind}
\chi(\varphi,t_s) = \chi(\varrho_s,\Delta_s) < \overline{L}-2 . 
\end{gather}
Note that if $t_K < t$, then substituting \eqref{eq:chi:ind} into 
\eqref{eq:tjp1} yields $\Delta_{j}>[\ell(\ell-1)\overline{L}]^{-1}$ for all 
$1 \le j \le K$. From $K > t\ell(\ell-1)\overline{L}$ it thus follows that 
$t_K > t$, a contradiction. Thus \eqref{eq:chi:ind} implies $t_K \ge t$, i.e., 
$r_t < \infty$. Recall that $(\beta_{j},B_{j})_{j \ge 1}$ and $K$ depend only 
on $\cR,\ell$ and on $\cR,\ell,t$ respectively. Hence the induction hypothesis 
for $j=K$ implies Lemma~\ref{lem:main}, where the tail bounds follow from 
\eqref{eq:tailsum:ind} as $\beta_K > 1$.

For the base case $j=0$ we start with an empty graph on $n$ vertices, and it is 
easy to see that \eqref{eq:Nk:approx:ind}--\eqref{eq:chi:ind} hold with 
$\beta_0=B_0=C_0=2$ and $\varphi(k,0)=\varrho_0(k,0)$, as defined above 
\eqref{eq:tjp1}.

Given $j \ge 1$, for the induction step we may assume that $t_{j-1} \le t$ 
(otherwise $t_{j}=t_{j-1}$, and there is nothing to prove). We first assume 
that $G_{t_{j-1} n}^{\cR}$ satisfies the induction hypothesis, i.e., 
\eqref{eq:Nk:approx:ind}--\eqref{eq:chi:ind} with $j$ replaced by $j-1$. 
In particular, \eqref{eq:Nk:varphi:ind:technical} and 
\eqref{eq:Nk:varphi:ind:tailsum} hold for $t=t_{j-1}$ with $r_t=j-1$, 
and we have 
$S(G_{t_{j-1}n}^{\cR}) \le \chi(\varphi,t_{j-1})+1=\chi(\varrho_{j-1},\Delta_{j-1})+1=L_j$ 
for $n \ge n_0(C_{j-1})$. 
Now we condition on $G_{t_{j-1} n}^{\cR}=F$ and, analogous as after 
\eqref{eq:tjp1}, apply Theorem~\ref{thm:evolution} with $\beta=\beta_{j-1}$, 
$B=B_{j-1}$, $L=L_j$, $\sigma=\Delta_j$, $C=C_{j-1}$ and 
$\varphi(k)=\varrho_{j-1}(k,\Delta_{j-1})=\varphi(k,t_{j-1})$, which is 
possible by the induction hypothesis (and the properties established above). 
So, for $n \ge n_0(\ell,L_j,\Delta_j,\beta_{j-1},B_{j-1},C_{j-1})$, with 
probability at least $1-n^{-200}$, for every $0 \le i \le \Delta_j n$ the graph 
$F_{i}^{\cR}$ satisfies \eqref{eq:varphi:technical}--\eqref{eq:varphi:tailsum}, 
\eqref{eq:Nk:approx}--\eqref{eq:NK:tailsum} and \eqref{eq:S2} when 
$\beta,B,C,F,\varphi(\cdot)$ are replaced by 
$\tilde{\beta},\tilde{B},\tilde{C},F_{i}^{\cR},\varrho(\cdot,i/n)$, where 
$\tilde{C}=C_{j-1}+9$. 
Note that for size rules $F_{\Delta_j n}^{\cR}$ is exactly $G_{t_{j} n}^{\cR}$ 
conditional on $G_{t_{j-1} n}^{\cR}=F$. It is crucial that 
$\tilde{\beta},\tilde{B},\tilde{C},\varrho$ do \emph{not} depend on the 
initial graph $F$, and that by construction $\varrho=\varrho_j$, 
$\beta_j \le \tilde{\beta}$, $B_j \ge \tilde{B}$ and $C_j=\tilde{C}$. 
So, by appealing to the induction hypothesis and recalling \eqref{def:varphi}, 
it follows that with probability at least $1-(j-1)n^{-200}-n^{-200}$ equations 
\eqref{eq:Nk:approx:ind}--\eqref{eq:S2:ind} hold. 
It remains to show that \eqref{eq:chi:ind} holds. To this end 
recall that \eqref{eq:S2:ind} holds with probability at least 
$1-jn^{-200} > 1-\eps$ for all $n \ge n_0({C_{j}},K,\eps)$. So, using that 
susceptibility is monotone increasing, by \eqref{eq:S2:bound} it follows that 
for all $0 \le t' \le \min\{t,t_j\}$ we have 
$\chi(\varphi,t') < \eps^{-1}+1=\overline{L}-2$, say. Now \eqref{eq:chi:ind} 
follows by combining the previous estimate with the observation that for every 
$s \le j$ with $t_s \le t$ we have $\chi(\varphi,t_s) = \chi(\varrho_s,\Delta_s)$. 
This completes the induction step.

Finally, to see that $n \ge n_0(\cR,\ell,t)$ suffices note that in each of the 
$K$ steps we only used 
$n \ge n_0(\ell,L_j,\Delta_j,\beta_{j-1},B_{j-1},C_{j-1},C_{j},K,\eps)$, where 
$C_j=9j+2$ and $L_j,\Delta_j,\beta_{j-1},B_{j-1}$ depend only on $\cR,\ell$. 
This concludes the proof since $\eps$ (and thus $K$) only depends on 
$\cR,\ell,t$. 
\end{proof}
Now we define $\tcx=\tcx^{\cR}$ as the infimum of the set of $t \ge 0$ for 
which \eqref{eq:S2:infty} holds as $n \to \infty$; so \eqref{eq:S2:infty} fails 
for $t < \tcx$. 
The remark after the proof of Lemma~4 in~\cite{AAP2011} implies that for 
$t>1$ we whp have $L_1(G_{tn}^{\cR}) \ge c n$ for $c=c(\ell,t)>0$, yielding 
$S(G_{tn}^{\cR}) \ge [L_1(G_{tn}^{\cR})]^2/n \ge c^2n$; so $\tcx \le 1$. 
Furthermore, for $t<[\ell(\ell-1)]^{-1}$ an application of 
Theorem~\ref{thm:evolution} to the empty graph $F=G_{0}^{\cR}$ on $n$ vertices 
with $\sigma =t$ and $L=1$ (similar as in the proof of Lemma~\ref{lem:main}) 
readily shows $S(G_{\sigma n}^{\cR}) \le \tilde{L}$ whp, so
$\tcx \ge [\ell(\ell-1)]^{-1}$. 
Now suppose that \eqref{eq:S2:infty} fails for $t = \tcx$. The proof of 
Lemma~\ref{lem:main} then shows that whp \eqref{eq:Nk:approx:ind}--\eqref{eq:S2:ind} 
hold for $i=\tcx n$, and that $\chi(\varphi,\tcx) < \overline{L}-2$. 
It follows that we can apply Theorem~\ref{thm:evolution} with 
$\sigma = [\ell^2\overline{L}]^{-1}$ and $L=\overline{L}$; this implies 
$S(G_{(\tcx+\sigma)n}^{\cR}) \le \tilde{L}$ whp, contradicting the definition of 
$\tcx$. So, since the susceptibility is monotone increasing, it follows that 
\eqref{eq:S2:infty} holds for all $t \ge \tcx$. 
Combining our findings, Lemma~\ref{lem:main}, \eqref{eq:Nk:varphi:ind:technical} 
and \eqref{eq:Nk:varphi:ind:tailsum} now yield Theorem~\ref{thm:main} with 
$\rho_k(t)=\varphi(k,t)$.

\subsection{Branching processes preliminaries}\label{sec:preliminaries}
The following basic results for branching processes will be used in the proof of 
Theorem~\ref{thm:evolution}. They are similar to Theorems~3.2 and~3.3 
in~\cite{SpencerWormald2007}, where they are attributed to much earlier results 
of Cr\'{a}mer. Given a non-negative integer valued random variable $X$, let 
$F_X(z) = \E z^X$ denote the \emph{(probability) generating function} of $X$. 
Note that $F_X(z)$ is convex and monotone increasing for $z \ge 0$.

The first lemma essentially states that a two-generation branching process has 
(uniform) exponential tails provided that the generating function of each 
offspring distribution has radius of convergence strictly larger than one 
(and thus also exhibits exponential decay). 
\begin{lemma}\label{lem:bp:twogeneration}
Let $X,Y \ge 0$ be integer valued random variables with $F_X(\alpha) \le A$ 
and $F_Y(\beta) \le B$, where $\alpha, \beta > 1$. 
Let $Z$ be the number of grandchildren in the two-generation branching process 
in which the root node has X children and then each child, independently, has 
$Y$ children. 
There exists $a > 0 $ (depending only on $\alpha,\beta,A,B$) such that 
$\Pr(Z \ge s) \le A e^{-as}$ for all $s \ge 0$. 
\end{lemma}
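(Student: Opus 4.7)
The plan is to bound the probability generating function of $Z$ at some point $\gamma>1$, and then apply Markov's inequality in the usual exponential form: $\Pr(Z\ge s)=\Pr(\gamma^Z\ge\gamma^s)\le \gamma^{-s}F_Z(\gamma)$. If we can locate $\gamma>1$ with $F_Z(\gamma)\le A$, we are done with $a=\log\gamma$.

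Since $Z$ is a random sum of $X$ independent copies of $Y$, and $X$ is independent of the family of offspring sizes, the standard compounding identity for generating functions gives $F_Z(z)=F_X(F_Y(z))$. By monotonicity of $F_X$ on $[0,\infty)$, it therefore suffices to choose $\gamma\in(1,\beta]$ with $F_Y(\gamma)\le\alpha$, because then $F_Z(\gamma)\le F_X(\alpha)\le A$.

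To produce such a $\gamma$ quantitatively in terms of $\alpha,\beta,A,B$, I would use log-convexity: the map $t\mapsto \log F_Y(e^t)$ is convex (it is the log of a moment generating function), and at $t=0$ it equals $0$, while at $t=\log\beta$ it is at most $\log B$. Hence along the chord, for any $\lambda\in[0,1]$,
\[
F_Y(\beta^{\lambda})\le B^{\lambda}.
\]
Choosing $\lambda=\min\{1,(\log\alpha)/(\log\max\{B,2\})\}>0$ and setting $\gamma=\beta^{\lambda}$ gives $\gamma>1$ and $F_Y(\gamma)\le\alpha$, as required. (If $B\le 1$ the statement is trivial since then $Y=0$ a.s., so we may harmlessly assume $B>1$.) Then Markov's inequality yields $\Pr(Z\ge s)\le A\gamma^{-s}=Ae^{-as}$ with $a=\log\gamma>0$, and $a$ depends only on $\alpha,\beta,A,B$ as required.

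There is no real obstacle: the only subtle point is to make sure that $F_Y(\gamma)$ is actually finite and at most $\alpha$ for \emph{some} $\gamma>1$, which is where the hypothesis $\beta>1$ enters; continuity and monotonicity of $F_Y$ together with $F_Y(1)=1<\alpha$ would suffice abstractly, but the log-convexity argument above has the virtue of giving an explicit $\gamma$ (and hence explicit $a$) in terms of the data $(\alpha,\beta,A,B)$, which is what the statement of the lemma asks for.
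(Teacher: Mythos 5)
Your proof is correct and takes essentially the same route as the paper: both exploit $F_Z = F_X \circ F_Y$, locate some $\gamma \in (1,\beta]$ with $F_Y(\gamma) \le \alpha$ so that $F_Z(\gamma) \le F_X(\alpha) \le A$, and then conclude with the exponential Markov bound taking $a = \log\gamma$. The only (cosmetic) difference lies in how the intermediate point $\gamma$ is produced: the paper uses ordinary convexity of $F_Y$ on $[1,\beta]$ and a linear chord to $(\beta,C)$ with $C=\max\{B,2\}$ chosen large enough, whereas you use log-convexity of the moment generating function and a multiplicative chord; both are standard and give an explicit $a$ depending only on $(\alpha,\beta,A,B)$.
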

\begin{proof}
Pick $C \ge \max\{B,2\}$ such that $x=1+(\alpha-1)(\beta-1)/(C-1) \le \beta$. 
Using $F_Y(1) = 1$ and $F_Y(\beta) \le B \le C$, convexity yields 
$F_Y(z) \le [(z-1)C+(\beta-z)]/(\beta-1)$ for all $z \in [1,\beta]$.
So, by choice of $x$ we have $F_Y(x) \le \alpha$. 
Observing that $F_Z(z) = F_X(F_Y(z))$, using monotonicity we obtain 
$F_Z(x) = F_X(F_Y(x)) \le F_X(\alpha) \le A$. 
Since $x > 1$ implies $F_Z(x) \ge \Pr(Z \ge s) x^s$ for every $s \ge 0$, 
we deduce $\Pr(Z \ge s) \le A e^{-as}$ for $a=\log x > 0$, 
completing the proof. 
\end{proof}
The second lemma is a standard result for subcritical Galton--Watson branching 
process: these exhibit (uniform) exponential decay if the offspring 
distribution itself has (uniform) exponential tails. 
\begin{lemma}\label{lem:bp:tree:Z}
Let $Z \ge 0$ be an integer valued random variable with $\E Z \le \mu < 1$ and 
$F_Z(\beta) \le B$, where $\beta>1$. 
Let $T$ be the total size of the Galton--Watson branching process in which each 
node, independently, has $Z$ children. 
There exist $\delta > 1$ and $D>0$ (depending only on $\beta,B,\mu$) such that 
$F_T(\delta) \le D$. 
\end{lemma}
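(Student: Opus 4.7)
\emph{Plan.} The approach is to exploit the functional equation
\[
F_T(z) \;=\; z\, F_Z\bigl(F_T(z)\bigr),
\]
which follows from the branching decomposition $T \stackrel{d}{=} 1 + \sum_{i=1}^{Z} T^{(i)}$ with iid copies $T^{(i)}$ of $T$, independent of $Z$. The strategy is to find $\delta > 1$ and $D \ge 1$ (depending only on $\mu,\beta,B$) with $\delta F_Z(D) \le D$; note that $\delta\le D$ is automatic since $F_Z(D)\ge 1$. Letting $T_n$ denote the size of the branching process truncated at generation $n$, one has $F_{T_0}(z)=z$ and $F_{T_{n+1}}(z)=z F_Z(F_{T_n}(z))$, so by induction $F_{T_n}(\delta)\le D$ for all $n$ (using that $F_Z$ is increasing on $[1,\infty)$). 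Since $\mu<1$ the process is a.s.\ finite and $T_n\nearrow T$, so monotone convergence of $\delta^{T_n}\nearrow \delta^T$ gives $F_T(\delta)\le D$, as required.

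It remains to produce such a pair $(\delta,D)$ quantitatively. The intuition is that $F_Z(1)=1$ and $F_Z'(1)=\E Z\le\mu<1$ force $F_Z(y)<y$ for $y$ slightly larger than $1$; the exponential tail $F_Z(\beta)\le B$ will dictate how small ``slightly larger'' needs to be. Setting $\beta':=(1+\beta)/2\in(1,\beta)$, I aim to prove a quadratic Taylor bound
\[
F_Z(y)\;\le\; 1+\mu(y-1)+\tfrac{K}{2}(y-1)^2 \qquad\text{for all } y\in[1,\beta'],
\]
where $K=K(\beta,B)$. Granted this, choose $\eps\in(0,\beta'-1]$ small enough that $\mu+K\eps/2<1$, and set $y^*:=1+\eps$; then $F_Z(y^*)<y^*$, so $D:=y^*$ and $\delta:=y^*/F_Z(y^*)>1$ furnish the required pair, with all quantities depending only on $\mu,\beta,B$.

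The main obstacle is the quadratic estimate. Taylor's theorem with integral remainder yields the pointwise bound $y^k-1-k(y-1) = \int_1^y k(k-1)s^{k-2}(y-s)\,ds \le \tfrac{1}{2}k(k-1)(\beta')^{k-2}(y-1)^2$ for $y\in[1,\beta']$ and $k\ge 2$, so summing against $p_k:=\Pr(Z=k)$ gives
\[
F_Z(y)-1-\mu(y-1)\;\le\;\tfrac{1}{2}(y-1)^2 \sum_{k\ge 2}k(k-1)p_k(\beta')^{k-2}.
\]
Since $r:=\beta'/\beta<1$, the function $k\mapsto k(k-1)r^{k-2}$ is bounded uniformly in $k$ by some constant $M(r)<\infty$, and hence the sum is at most $M(r)\beta^{-2}\sum_{k\ge 2}p_k\beta^{k}\le M(r)B/\beta^2=:K$. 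This is precisely the step where the exponential-tail hypothesis $F_Z(\beta)\le B$ is essential, and it produces $K$ depending only on $\beta$ and $B$, completing the plan.
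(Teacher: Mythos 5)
Your proof is correct, and it takes a genuinely different route from the paper's. The paper works with exponential moments of $Z-1$: it sets $f(t)=\E e^{t(Z-1)}$, uses a second-order Taylor bound (with $F_Z(\beta)\le B$ controlling $f''$ on a neighbourhood of $0$) to find $x>0$ with $f(x)\le c<1$, invokes the hitting-time representation of total progeny ($T>s$ implies $\sum_{i=1}^s Z_i\ge s$ in breadth-first exploration) together with Markov's inequality to get the geometric tail bound $\Pr(T>s)\le c^s$, and then sums to bound $F_T(\delta)$ for any $\delta<1/c$. Your argument instead works directly with the probability generating function: you use the recursion $F_T(z)=zF_Z(F_T(z))$, construct a pair $(\delta,D)$ with $\delta F_Z(D)\le D$ via a second-order Taylor bound on $F_Z$ itself near $1$ (again with $F_Z(\beta)\le B$ controlling the quadratic coefficient), and push the bound through by induction on the truncated process and monotone convergence. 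Both proofs use a Taylor/quadratic estimate fed by the exponential-tail hypothesis, but they are organized around different representations of $T$: the paper's random-walk/hitting-time representation produces an explicit geometric tail bound for $T$ as a byproduct, whereas your fixed-point approach gives the PGF bound more directly and somewhat more self-containedly (no need for the exploration/Markov step). Both are standard and equally rigorous; the paper's gives slightly more intermediate information, yours is arguably cleaner if the PGF bound is the only goal.
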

\begin{proof}
Let $f(t) = \E(e^{t(Z-1)})$. Observe that $f(0) = 1$ and 
$f'(0) = \E(Z-1)\le\mu-1$. As in the proof of Lemma~\ref{lem:bp:twogeneration}, 
$F_Z(\beta) \le B$ for $\beta > 1$ yields $\Pr(Z \ge s) \le B \beta^{-s}$, 
which in turn readily implies that for some $C=C(\beta,B)$ we have 
$f''(t) = \E( (Z-1)^2e^{t(Z-1)}) \le C$ for all $0 \le t \le (\log \beta) /2$, 
say. So, using Taylor's theorem, for $0 \le t \le (\log \beta)/2$ we deduce 
$f(t) \le 1 + (\mu-1)t + Ct^2/2 = h(t)$. Let 
$x = \min\{(\log \beta)/2,(1-\mu)/C\} > 0$, and observe that 
$c=\max\{h(x),1/2\} > 0$ satisfies $f(x) \le c < 1$. 
Exploring the branching process tree as usual in breadth-first search order, 
we see that $T > s$ implies $\sum_{i=1}^{s} Z_i \ge s$, where the $Z_i$ are 
independent copies of $Z$ (corresponding to the number of children of the 
$i$-th node). Now, using Markov's inequality and independence of the $Z_i$, for 
every $s \ge 0$ we obtain 
$\Pr(T > s) \le \E(e^{x(\sum_{i=1}^{s} Z_i)}) e^{-xs} = f(x)^s \le c^s$. 
Finally, picking $1 < \delta < 1/c$, it follows that 
$F_T(\delta) \le D=D(\delta,c)$, as claimed. 
\end{proof}

\subsection{Proof of Theorem~\ref{thm:evolution}}\label{sec:proof:thm:evolution}
The proof of Theorem~\ref{thm:evolution} relies on a two-round exposure 
argument: we first reveal the random tuples selected, and afterwards expose 
their order of appearance. It will be convenient to work with a continuous-time 
random graph model, where the $n^{\ell}$ tuples arrive according to independent 
Poisson processes with rates $1/n^{\ell-1}$. So tuples appear with rate $n$, 
and each tuple is chosen uniformly at random and independently of all previous 
choices. Let $E_{t}$ denote the set of tuples which arrive in $[0,t]$; so 
$|E_{t}| \sim \Po(tn)$. Observe that for each tuple $\uu \in [n]^\ell$ the 
number $A_{\uu}(t)$ of its arrivals in $[0,t]$ satisfies 
$A_{\uu}(t) \sim \Po(t/n^{\ell-1})$, and that these random variables are 
independent for different tuples. Furthermore, writing $x=t/n^{\ell-1}$ and 
using $e^{-x} \ge 1-x$ twice, note that for $\ell \ge 2$ we have 
\begin{equation}\label{eq:Auv2}
\Pr(A_{\uu}(t) \ge 2) = 1 - e^{-x}- xe^{-x} \le x(1-e^{-x}) \le x^2 \le t^2 /n^{\ell}.
\end{equation}
Similarly 
\begin{equation}\label{eq:Auv1}
\Pr(A_{\uu}(t) \ge 1) = 1 - e^{-x} \le x = t / n^{\ell-1} .
\end{equation}
Starting with $F$, for each tuple $\uu \in E_{t}$ we join all 
$\binom{\ell}{2}$ pairs of vertices by edges, and we denote the resulting graph 
by $H_t$. We define $H^{\cR}_t$ as the graph which we obtain by starting with 
$F$, and then presenting the tuples to $\cR$ (together with the component sizes 
of the vertices) in a random order, always updating the graph according to the 
decisions of $\cR$ (adding the pairs selected by $\cR$). Since conditioned on 
$|E_{t}|=i$ we have $i$ tuples chosen independently and uniformly at random, 
it follows that 
\begin{equation}
\label{eq:Nk:transfer:Gi}
\E( N_k(H_t^{\cR}) \mid \text{$|E_{t}|=i$}) = \E N_k(F_i^{\cR}).
\end{equation}
Furthermore, mimicking the proof of \emph{Pittel's inequality} (see e.g.~\cite{BBRG}) 
for $0 < tn < n^{\ell}$, a short calculation shows that for any graph property 
$\cQ$ we have 
\begin{equation}\label{eq:pittel}
\Pr(F_{tn}^{\cR} \not\in \cQ) \le 3 \sqrt{t n} \cdot \Pr(H_t^{\cR} \not\in \cQ) .
\end{equation}

In the following sections we always tacitly assume that the assumptions of 
Theorem~\ref{thm:evolution} hold and consider $\ts=\ts(n)$ satisfying 
\begin{equation}\label{eq:ts} 
0 \le \ts \le \sigma \le 1 ,
\end{equation}
where $\sigma \le 1$ follows from \eqref{eq:Nk:sigma}. Furthermore, unless 
stated otherwise, we will use the continuous-time random graph models $H_{\ts}$ 
and $H_{\ts}^{\cR}$. For later usage let 
\begin{equation}\label{eq:U} 
U = (\log n)^{6/5} .
\end{equation}

\subsubsection{Component exploration process for $\ell=2$}\label{sec:NEP}
Our main ingredient for analyzing the first exposure round is a certain 
exploration process. Given a (random) vertex $v$, it finds all tuples in 
$E_{\ts}$ and components of $F$ that are `relevant' in the second exposure 
round for determining $|C_v(H_{\ts}^{\cR})|$, where we write $C_v(G)$ for the 
set of vertices of $G$ that are in the same component as $v$. 
As certain details are rather technical for Achlioptas processes, here we 
first outline some of the basic ideas and techniques for the simpler case of 
an Erd{\H o}s--R\'enyi evolution starting from an initial graph $F$ (in this 
special case similar ideas were used by Spencer and 
Wormald~\cite{SpencerWormald2007}). This formally corresponds to the special 
case $\ell=2$ and the rule which always adds the offered pair $v_1v_2$ to the 
evolving graph; so $H_{\ts}=H_{\ts}^\cR$.

One major difference to the Erd{\H o}s--R\'enyi case (where we start with an 
empty graph on $n$ vertices) is that here we have two sources of edges: (i) the 
initial graph $F$ and (ii) the random pairs in $E_{\ts}$. As edges of type (i) 
are deterministic and those of type (ii) are random, our exploration process 
explicitly considers them separately. 
In the first round we start with a randomly chosen $v$ and mark all 
$u \in C_{v}(F)$ as \emph{reached}; all other vertices are \emph{unreached}. 
In each later round we sequentially go through the vertices $w$ reached in 
the previous round (the order does not matter here) and determine all its 
so far unreached neighbours $u$ in $E_{\ts}$ (corresponding to 
pairs $(u_1, u_2)\in E_{\ts}$ containing $u$ and $w$), each time marking all 
$\tilde{u} \in C_{u}(F)$ as reached. 
Note that upon termination $C_v(H_{\ts})$ equals the set of all reached 
vertices.

The previous procedure yields an associated `exploration tree' 
$\cT_{v}(H_{\ts})$ in a rather natural way: loosely speaking, $u$ is a child of 
$w$ if $u$ was `reached' via $w$. With an eye to the upcoming analysis for size 
rules, here we already introduce different types of nodes: \emph{vertex nodes}, 
\emph{component nodes}, and \emph{root nodes}. More precisely, we define 
$\cT_{v}(H_{\ts})$ inductively as follows: 
it has a root node $v$, whose children are vertex nodes $u \in C_{v}(F)$. Then, 
given any vertex node $w$, each of its so far unreached neighbours $u$ in 
$E_{\ts}$ yields a component node as a child, which in turn has vertex nodes 
$\tilde{u} \in C_{u}(F)$ as children. It follows that the set of all vertex 
nodes in $\cT_{v}(H_{\ts})$ equals $C_v(H_{\ts})$. The main point is that, even 
after ignoring all labels, the \emph{structure} of $\cT_{v}(H_{\ts})$ is enough 
to determine $|C_v(H_{\ts})|$.

The key idea is now to approximate $\cT_{v}(H_{\ts})$ by an `idealized' 
branching process, similar as in the `classical' Erd{\H o}s--R\'enyi case 
(exploiting, as usual, that by construction every edge is tested at most once). 
Recall that in $\cT_{v}(H_{\ts})$ already reached vertices are `ignored'. So, 
noting that endpoints of random pairs in $E_{\ts}$ correspond to random 
vertices, and that each edge gives rise to two \emph{ordered} tuples, it seems 
plausible that $\cT_{v}(H_{\ts})$ is dominated by (may be regarded as a subset 
of) a branching process $\bpt[v,\ts]$ where (ignoring for simplicity the root 
and all labels) every vertex node, independently, has $\Po(2\ts)$ component 
nodes as children, each which in turn, independently, has $N$ vertex node 
descendants, where $N \sim |C_u(F)|$ for a randomly chosen vertex $u$. 
Now, using \eqref{eq:Nk:sigma} and \eqref{eq:NK:chi} each vertex node has 
in expectation $2 \ts \cdot S(F) \le 2 \sigma L < 1$ vertex nodes as 
grandchildren, so we expect that $\bpt[v,\ts]$ resembles a subcritical 
branching process which has $O(\log n)$ size with very high probability. From 
this it follows that $\cT_v(H_{\ts})$ and $\bpt[v,\ts]$ are both small and have 
similar offspring distribution (as not too many vertices are reached and thus 
ignored), so it seems plausible that we can couple them so that they agree whp. 
Note that $\bpt[v,\ts]$ still depends on $n$ and the initial graph $F$. 
Define $\Pr(R=k)=\varphi(k)$, where $\varphi$ is given by 
Theorem~\ref{thm:evolution}. The point is now that using \eqref{eq:Nk:approx} 
it follows that $R$ is very close to $N$. So, denoting by $\bp[\varphi,\ts]$ 
the `idealized' version of $\bpt[v,\ts]$ where we use $R$ instead of $N$, the 
former considerations suggest that there is a coupling such that whp 
$\bpt[v,\ts]\cong\bp[\varphi,\ts]$ holds (ignoring the labels of the 
vertices). To summarize, we just outlined that using the `intermediate' process 
$\bpt[v,\ts]$ we can couple $\cT_v(H_{\ts})$ and $\bp[\varphi,\ts]$ so that 
they typically agree up to isomorphisms. Consequently, the distribution of 
$|C_v(H_{\ts})|$ can be approximated using $\bp[\varphi,\ts]$, which does 
\emph{not} depend on $n$ or $F$.

In the above construction and analysis we used in essential ways that in each 
round only one pair of vertices is chosen and connected by an edge. 
In contrast, when considering Achlioptas processes several vertices 
$\uv=(v_{1}, \ldots, v_{\ell})$ are chosen in each round, and only a 
subset of the edges between these vertices is added to the evolving graph. 
Furthermore, in the second exposure round the order in which the tuples $\uv$ 
are presented matters (as well as the order of the vertices in each tuple). 
This motivates the more involved exploration processes used in the next 
section, whose associated exploration tree captures more detailed 
structural information (also using more types of nodes).

\subsubsection{Component exploration process (the general case)}\label{sec:CEP}
In this section we consider the first exposure round, where the selected set of 
tuples $E_{\ts}$ is revealed. Note that this defines $H_{\ts}$, which we 
obtain by starting with $F$ and then joining all $\ell$ vertices of each tuple 
in $E_{\ts}$ by edges. Using a natural variant of the standard neighbourhood 
exploration process, for any vertex $v$ we can determine $C_{v}(H_{\ts})$ as 
follows. First we determine $C_{v}(F)$, i.e., find all other vertices which are 
in the same component of $F$ as $v$. Then, for each $w \in C_{v}(F)$ we find 
all tuples $\uu=(u_1, \ldots, u_{\ell})\in E_{\ts}$ containing $w$, and repeat 
the same procedure (recursively) for each $u_j \neq w$, see 
Figure~\ref{fig:nexpl}. Observe that for determining $C_{v}(H_{\ts})$ it 
suffices to consider only those vertices $u_j \neq w$ which we have not already 
reached in some previous exploration step.

\begin{figure}[t]
\centering
  \setlength{\unitlength}{1bp}%
  \begin{picture}(280.0, 89.00)(0,0)
  \put(0,0){\includegraphics{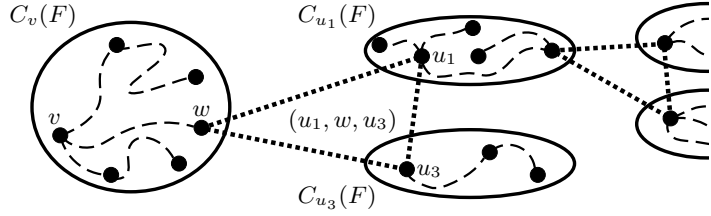}}
  \put(76.16,40.16){\fontsize{8.54}{10.24}\selectfont \makebox[0pt]{$w$}}
  \put(20.79,37.32){\fontsize{8.54}{10.24}\selectfont \makebox[0pt]{$v$}}
  \put(16.53,75.66){\fontsize{8.54}{10.24}\selectfont \makebox[0pt]{$C_v(F)$}}
  \put(167.74,60.75){\fontsize{8.54}{10.24}\selectfont \makebox[0pt]{$u_1$}}
  \put(162.07,18.15){\fontsize{8.54}{10.24}\selectfont \makebox[0pt]{$u_3$}}
  \put(129.41,36.19){\fontsize{8.54}{10.24}\selectfont \makebox[0pt]{$(u_1,w,u_3)$}}
  \put(127.28,75.66){\fontsize{8.54}{10.24}\selectfont \makebox[0pt]{$C_{u_1}(F)$}}
  \put(127.28,7.51){\fontsize{8.54}{10.24}\selectfont \makebox[0pt]{$C_{u_3}(F)$}}
  \end{picture}%
	\caption{\label{fig:nexpl} Example of the neighbourhood exploration process 
for $\ell=3$. It determines $C_{v}(H_{\ts})$ by first finding other vertices 
in the same component of $F$, then finding tuples containing them; afterwards 
it repeats this procedure for the new vertices in those tuples, and so on.}
\end{figure}

In the analysis it is easier to start with a \emph{random} vertex $v$ and break 
down the above exploration process into small steps, constructing an an 
associated \emph{exploration tree} $\cTv=\cTv(F)$. As we shall see, one key 
property of $\cTv$ is that we can (typically) reconstruct the vertices and 
components which have been reached, as well as the tuples which have been 
`tested' so far. The vertices of each exploration tree have different types: 
\emph{vertex nodes}, \emph{component nodes} and \emph{tuple nodes} will 
represent vertices, components of $F$ and $\ell$-tuples, respectively. For 
technical reasons we also have \emph{root nodes} and \emph{index nodes}. 
We denote the vertex nodes of $\cTv$ by $\cVv$.

As mentioned above, our exploration starts with a random vertex $v$, which 
serves as the root node of $\cTv$, see Figure~\ref{fig:Tv}. Next we 
(deterministically) find all vertices $w \in C_{v}(F)$ and then add the vertex 
nodes $w$ as children of the root. In the following we sequentially traverse 
each level containing vertex nodes (which essentially corresponds to a breadth 
first search). Given a vertex node $w$, we add $\ell$ index nodes 
$w_1, \ldots, w_{\ell}$ as children, where $w_j$ is an index node of type $j$. 
For each $j=1, \ldots, \ell$ we sequentially test for the presence and 
multiplicity of all so far untested tuples $\uu =(u_1, \ldots, u_\ell)$ 
with $u_j=w$; we denote the resulting multiset of found tuples by $S_{j,w}$. 
Now we sequentially traverse the $\uu \in S_{j,w}$. For each such 
$\uu =(u_1, \ldots, u_\ell)$ we add a tuple node $\uu $ and 
traverse the $u_i$ with $i \neq j$ sequentially. For each $i \neq j$, we add 
a component node $u_i$ of type $\lambda_j(i)$ as a child of $\uu $, 
where $\lambda_j(i)=i$ for $i<j$ and $\lambda_j(i)=i-1$ for $i > j$ (so that 
the component nodes $\{u_i\}$ with $i \neq j$ have types $1, \ldots, \ell-1$). 
If $u_i$ is already contained in $\cTv$ then we `ignore' this component node. 
Otherwise we add vertex nodes $w \in C_{u_i}(F)$ as children of $u_i$, see 
Figure~\ref{fig:Tv}. Note that $C_v(H_{\ts})$ consists exactly of the union of 
all vertex nodes of $\cTv$, so 
\begin{equation}\label{eq:Vv}
C_v(H_{\ts}^{\cR}) \subseteq C_v(H_{\ts}) = \cVv .
\end{equation}

The main point is that whenever no component nodes are ignored, then from 
$\cTv$ we can reconstruct all explored tuples (in $E_{\ts}$) and components 
(of $F$), which for size rules are the only ones relevant for determining the 
size of $C_v(H_{\ts}^{\cR})$. In fact, up to relabellings, we can reconstruct 
these tuples and the relevant component sizes of $F$ \emph{without} looking at 
the vertex labels (the tree structure, including the node types, is enough). 
Motivated by this we say that $S_{j,w}$ is \emph{bad} if one of the following 
conditions hold:
\begin{itemize}
	\item $S_{j,w}$ contains some tuple $\uu =(u_1, \ldots, u_\ell)$ 
multiple times.
	\item $S_{j,w}$ contains a tuple $\uu =(u_1, \ldots, u_\ell)$ where 
$u_i$ with $i \neq j$ is already a vertex node of $\cTv$ constructed so far. 
	\item $S_{j,w}$ contains a tuple $\uu =(u_1, \ldots, u_\ell)$ where 
$u_i$ and $u_k$ with $i \neq k$ are in the same component of $F$ (note that this 
holds for $u_i \in C_w(F)$ for $i \neq j$). 
	\item $S_{j,w}$ contains tuples $\uu =(u_1, \ldots, u_\ell)$ and 
$\uv =(v_1, \ldots, v_\ell)$ for which $u_i$ and $v_k$ with 
$i,k \neq j$ are in the same component of $F$. 
\end{itemize}
Otherwise $S_{j,w}$ is \emph{good}; Observe that if $S_{j,w}$ is good, then 
in $\cTv$ none of $w$'s component node descendants $u_i$ with 
$\uu =(u_1, \ldots, u_\ell) \in S_{j,w}$ are ignored. For this reason we call 
$\cTv$ \emph{good} if every $S_{j,w}$ is good. 
In the following we estimate the probability that $S_{j,w}$ is \emph{bad}. 
Clearly, there are at most $n^{\ell-1}$ different tuples with $u_j=w$. 
Recalling that $\cVv$ denotes the vertex nodes of $\cTv$, there are at most 
$\ell n^{\ell-2} |\cVv|$ different tuples satisfying the second 
condition, and at most $\ell^2 n^{\ell-2} |L_1(F)|$ tuples to which the 
third condition applies. Similarly, there are at most 
$\ell^2 n^{2(\ell-2)+1} |L_1(F)|$ pairs of tuples which satisfy 
the last condition. 
Recall that the random variables $A_{\uu}(\ts)$, which count the number of 
times $\uu$ is in $E_{\ts}$, are independent for different tuples $\uu$. 
So, using \eqref{eq:Auv2}, \eqref{eq:Auv1} and $\ts \le 1$, whenever 
$\max\{|\cVv|,|L_1(F)|\} \le U$ holds we see that the probability of 
$S_{j,w}$ being bad is at most 
\begin{equation}\label{eq:bad:Sjw}
n^{\ell-1} \cdot \ts^2 /n^{\ell} + 2\ell^2 n^{\ell-2}U \cdot \ts /n^{\ell-1} + \ell^2 n^{2\ell-3}U \cdot \ts^2/n^{2(\ell-1)} \le 4\ell^2 U/n . 
\end{equation}

\begin{figure}[t]
\centering
  \setlength{\unitlength}{1bp}%
  \begin{picture}(260.0, 89.00)(0,0)
  \put(0,0){\includegraphics{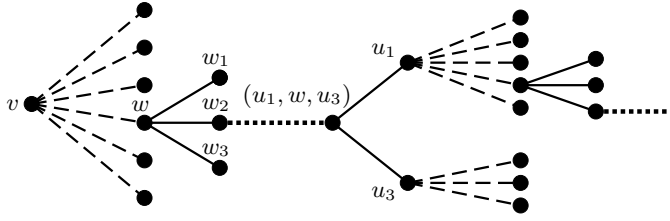}}
  \put(56.08,44.08){\fontsize{8.54}{10.24}\selectfont \makebox[0pt]{$w$}}
  \put(7.80,44.65){\fontsize{8.54}{10.24}\selectfont \makebox[0pt]{$v$}}
  \put(147.66,65.52){\fontsize{8.54}{10.24}\selectfont \makebox[0pt]{$u_1$}}
  \put(147.66,11.59){\fontsize{8.54}{10.24}\selectfont \makebox[0pt]{$u_3$}}
  \put(114.99,47.62){\fontsize{8.54}{10.24}\selectfont \makebox[0pt]{$(u_1,w,u_3)$}}
  \put(84.43,61.94){\fontsize{8.54}{10.24}\selectfont \makebox[0pt]{$w_1$}}
  \put(84.43,44.93){\fontsize{8.54}{10.24}\selectfont \makebox[0pt]{$w_2$}}
  \put(84.43,27.92){\fontsize{8.54}{10.24}\selectfont \makebox[0pt]{$w_3$}}
  \end{picture}%
	\caption{\label{fig:Tv} Example of the exploration tree $\cTv$ for $\ell=3$. 
The children of the root vertex $v$ are $w \in C_v(F)$ (vertex nodes), which in 
turn each have children $w_1, w_2, w_{3}$ (index nodes of types $1, 2, 3)$. 
Every $w_j$ has all (so far untested) tuples $\uu=(u_1,u_2,u_3) \in E_{\ts}$ 
with $u_j=w$ as children (tuple nodes), whose descendants are component nodes 
$u_i$ with $i \neq j$ (of types $1,2$). If $u_i$ is not already a vertex node 
of $\cTv$, then its children are $w \in C_{u_i}(F)$ (vertex nodes), for which 
we repeat the above construction. }
\end{figure}

To understand the structural properties of $\cTv$ it will be 
useful to compare it with a closely related process that is simpler to analyze. 
Recall that when determining the $S_{j,w}$ we only consider so far untested 
tuples. Thus each $S_{j,w}$ is dominated (with respect to the subset relation) 
by $\St_{j,w}$, where for each of the $n^{\ell-1}$ tuples 
$\uu =(u_1, \ldots, u_\ell)$ with $u_j=w$, independently, the number 
of its arrivals is given by a $\Po(\ts/n^{\ell-1})$ distribution.
There is a natural coupling between $S_{j,w}$ and $\St_{j,w}$ 
which only fails if $\St_{j,w}$ contains $\uu $ which are forbidden
for $S_{j,w}$. Since each of these `bad' tuples contains at least one vertex 
from $\cVv$, there are at most $\ell |\cVv| n^{\ell-2}$ of them. 
So, with \eqref{eq:Auv1} and $\ts \le 1$ in mind, by considering the 
probability that $\St_{j,w}$ selects at least one of them, whenever 
$|\cVv| \le U$ holds it follows that 
\begin{equation}
\label{eq:dtv:Sjw}
\dtv{{S_{j,w}}}{{\St_{j,w}}} \le \ell |\cVv| n^{\ell-2} \cdot \ts / n^{\ell-1} \le \ell U/n . 
\end{equation}
We now define $\tpv=\tpv(F)$ similarly to $\cTv$: we employ the same 
construction except that we use (independent copies of) $\St_{j,w}$ instead of 
$S_{j,w}$ and always proceed as if $\St_{j,w}$ is good. Since each $S_{j,w}$ is 
dominated by (may be regarded as a subset of) $\St_{j,w}$, it follows that 
$\cTv$ is dominated by $\tpv$ with respect to the subgraph relation. 
Denoting the set of vertex nodes of $\tpv$ by $\tpvV$, we see that $\cVv$ 
is dominated by $\tpvV$.

The next lemma states that the number of vertex nodes in $\cTv$ and $\tpv$ have 
(uniform) exponential decay. 
\begin{lemma}\label{lem:Tail:1}
Suppose that \eqref{eq:Nk:sigma} and \eqref{eq:NK:tailsum}--\eqref{eq:NK:chi} 
hold with $\beta > 1$. 
There exist $a,A > 0$ (depending only on $\ell,L,\sigma,\beta,B$) such that 
for all $0 \le \ts \le \sigma$ and $s \ge 0$ we have 
$\Pr(|\cVv| \ge s) \le \Pr(|\tpvV| \ge s) \le Ae^{-as}$, 
$\E N_{\ge s}(H_{\ts}^{\cR}) \le Ae^{-as} n$ and 
$\Pr(L_1(H_{\ts}^{\cR}) \ge s) \le Ae^{-as} n$. 
\end{lemma}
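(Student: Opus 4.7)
The plan is to bound $|\tpvV|$ by identifying the vertex-node structure of $\tpv$ with a Galton-Watson tree and applying the branching-process estimates from Section~\ref{sec:preliminaries}; the remaining statements will follow as direct corollaries. First, the inequality $\Pr(|\cVv| \ge s) \le \Pr(|\tpvV| \ge s)$ is immediate from the construction: $\cTv$ embeds into $\tpv$ as a subtree (with vertex nodes preserved) under the natural coupling, since each $S_{j,w}$ is a submultiset of $\St_{j,w}$ and the component nodes ignored in $\cTv$ are fully expanded in $\tpv$.

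Next I would view the vertex nodes of $\tpv$ as a Galton-Watson process in which one generation spans four tree-levels (vertex, index, tuple, component, vertex). Since $v$ is uniform in $[n]$, the number $R := |C_v(F)|$ of vertex-node children of the root has distribution $\Pr(R = k) = N_k(F)/n$, and by \eqref{eq:NK:tailsum}--\eqref{eq:NK:chi} satisfies $F_R(\beta) \le B$ and $\E R = S(F) \le L$. Each vertex node $w$ produces, one vertex-generation later, a total of
\[
Z_w \;=\; \sum_{j=1}^{\ell} \sum_{\uu \in \St_{j,w}} \sum_{i \ne j} R_{\uu, i}^{(w)}
\]
new vertex nodes, where $|\St_{j,w}| \sim \Po(\ts)$ independently across $j$, and the $R_{\uu,i}^{(w)}$ are i.i.d.\ copies of $R$ (since conditional on $u_j = w$ the other coordinates of a uniform $\ell$-tuple are i.i.d.\ uniform in $[n]$). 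A direct probability-generating-function calculation gives
\[
F_{Z_w}(z) \;=\; \exp\Bigl(\ell\ts\bigl(F_R(z)^{\ell-1}-1\bigr)\Bigr),
\]
so $\E Z_w = \ell(\ell-1)\ts\, S(F) \le \ell(\ell-1)\sigma L < 1$ by \eqref{eq:Nk:sigma}, while $F_{Z_w}(\beta) \le \exp(\ell\sigma(B^{\ell-1}-1))$. The offspring $Z_w$ is thus subcritical with exponentially decaying tail, so Lemma~\ref{lem:bp:tree:Z} yields $\delta > 1$ and $D > 0$ (depending only on $\ell, L, \sigma, \beta, B$) with $F_T(\delta) \le D$, where $T$ denotes the total size of a Galton-Watson tree with offspring law $Z_w$.

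Because the $\St_{j,w}$ in the construction of $\tpv$ are taken independently across $w$, the subtrees rooted at the $R$ level-$1$ vertex nodes are i.i.d.\ copies of $T$ independent of $R$, giving $F_{|\tpvV|}(z) = F_R(F_T(z))$. Using convexity of $F_T$ on $[1, \delta]$ with $F_T(1) = 1$, I pick $z_0 \in (1, \delta]$ with $F_T(z_0) \le \beta$, whence $F_{|\tpvV|}(z_0) \le F_R(\beta) \le B$; Markov applied to $z_0^{|\tpvV|}$ then delivers $\Pr(|\tpvV| \ge s) \le B z_0^{-s} = Ae^{-as}$ with $A = B$ and $a = \log z_0 > 0$.

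The remaining two bounds are easy consequences. For the expectation, linearity together with \eqref{eq:Vv} yields $\E N_{\ge s}(H_{\ts}^{\cR}) = n\Pr(|C_u(H_{\ts}^{\cR})| \ge s) \le n\Pr(|\tpvV| \ge s) \le Ae^{-as} n$ for uniform $u$. For $L_1$, Markov applied to $N_{\ge s}$ gives $\Pr(L_1 \ge s) \le \E N_{\ge s}/s \le Ae^{-as}n/s$, which is absorbed into $Ae^{-as} n$ (enlarging $A$ by a constant factor to handle $s = 0$). I do not foresee a real obstacle; the main care needed is in verifying the genuine independence structure underlying $Z_w$ (which is built into the Poissonized definition of $\tpv$) and in keeping the constants $a, A$ uniform over $\ts \in [0, \sigma]$ — both are essentially bookkeeping matters, since all the GF bounds above are monotone in $\ts$.
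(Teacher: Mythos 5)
Your proof is correct and follows essentially the same route as the paper's. You identify the collapsed vertex-node process as a Galton--Watson tree with offspring $Z_w$ (the paper's $W$), compute $\E Z_w = \ell(\ell-1)t\,S(F) < 1$ and the same generating-function bound $F_{Z_w}(\beta) \le \exp(\ell\sigma(B^{\ell-1}-1))$, invoke Lemma~\ref{lem:bp:tree:Z} to control the total size $T$ (the paper's $W^+$), and then handle the two-generation composition $F_{|\tpvV|} = F_R\circ F_T$ by choosing $z_0>1$ with $F_T(z_0)\le\beta$ via convexity --- this last step simply reproduces the proof of Lemma~\ref{lem:bp:twogeneration} inline rather than citing it, with the derived constants $a,A$ uniform over $\ts\in[0,\sigma]$ exactly because the bounds on $\E Z_w$ and $F_{Z_w}(\beta)$ are monotone in $\ts$. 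The concluding derivations of the $\E N_{\ge s}$ and $L_1$ bounds (via $\E N_{\ge s} = n\Pr(|C_v|\ge s)$ and Markov) also match the paper.
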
 
Before giving the proof of this result, which is based on branching processes 
arguments, we use it to show that $\cTv$ and $\tpv$ can be coupled so that 
they typically agree. Note that at distance $4i+1, 4i+2, 4i+3,4i+4$ from the 
root $\cTv$ and $\tpv$ always have vertex, index, tuple and component nodes. 
\begin{lemma}\label{lem:Cpl:1}
Suppose $n \ge n_0(\ell,L,\sigma,\beta,B)$ and that the assumptions of 
Theorem~\ref{thm:evolution} as well as \eqref{eq:ts} hold. 
There exists a coupling of $\cTv$ and $\tpv$ so that with probability at 
least $1-(\log n)^{4}/n$ we have $\cTv = \tpv$ and $\cTv$ is good. 
\end{lemma}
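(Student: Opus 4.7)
The plan is to couple $\cTv$ and $\tpv$ step by step in a breadth-first manner: each time the exploration encounters an index node $(j,w)$ I will use the total variation bound \eqref{eq:dtv:Sjw} to sample $S_{j,w}$ and $\St_{j,w}$ on the same probability space, succeeding except on a small `bad' event. As long as every such coupling succeeds and every $S_{j,w}$ is good (in the sense defined before \eqref{eq:bad:Sjw}), the two constructions will produce identical trees, because goodness is exactly what prevents $\cTv$ from ignoring component node descendants that $\tpv$ retains.

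First I would condition on the event $\cE = \{|\tpvV|\le U\}$, where $U=(\log n)^{6/5}$. By Lemma~\ref{lem:Tail:1} applied with $s=U$ we have $\Pr(\neg\cE)\le Ae^{-aU}$, which is much smaller than $n^{-1}$ for $n\ge n_0(\ell,L,\sigma,\beta,B)$. The tail bound on $F$ following from \eqref{eq:NK:tailsum} (as noted in the remark after Theorem~\ref{thm:evolution}) gives $L_1(F)\le U$ for large $n$ as well. On $\cE$ the full tree $\tpv$ contains at most $U$ vertex nodes, hence at most $\ell U$ index nodes to process, and as long as no coupling has failed yet $\cVv\subseteq\tpvV$, so $|\cVv|\le U$ throughout the joint construction.

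Next I would run the two explorations in parallel in this breadth-first order. At each index node $(j,w)$, conditioned on the history (which determines $\cVv$) I use the optimal coupling between $S_{j,w}$ and $\St_{j,w}$, which by \eqref{eq:dtv:Sjw} fails with conditional probability at most $\ell U/n$. Independently, by \eqref{eq:bad:Sjw} (whose hypotheses $|\cVv|,|L_1(F)|\le U$ are satisfied on $\cE$), the conditional probability that $S_{j,w}$ is bad is at most $4\ell^2 U/n$. Summing over the at most $\ell U$ index nodes, the probability on $\cE$ that some coupling fails or some $S_{j,w}$ is bad is at most $\ell U\cdot(\ell U/n+4\ell^2 U/n)\le 5\ell^3 U^2/n$. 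Combining with $\Pr(\neg\cE)$ gives failure probability at most $5\ell^3(\log n)^{12/5}/n+Ae^{-aU}\le(\log n)^4/n$ for $n$ large enough.

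The main technical point — the one piece of bookkeeping that requires care — is justifying that the per-step conditional bounds \eqref{eq:bad:Sjw} and \eqref{eq:dtv:Sjw} may really be applied at every step. This needs us to know $|\cVv|\le U$ before the step, which on $\cE$ is guaranteed by the domination $\cVv\subseteq\tpvV$ inherited from the fact that every previous coupling has either succeeded (in which case the subset relation propagates) or failed (in which case we are already inside the small exceptional event being union-bounded). Thus conditioning on `no failure so far' the exploration we are running to \emph{define} $\cTv$ is always dominated by $\tpv$, and the estimates derived in Section~\ref{sec:CEP} apply uniformly; this is the delicate step that the rest of the argument is organised around.
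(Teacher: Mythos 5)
Your proposal is correct and follows essentially the same strategy as the paper's proof: run the two explorations in parallel in BFS order, couple $S_{j,w}$ with $\St_{j,w}$ at each index node using the total variation bound \eqref{eq:dtv:Sjw}, account for the badness probability via \eqref{eq:bad:Sjw}, abandon when the tree gets large, and control the abandonment probability via the tail bound of Lemma~\ref{lem:Tail:1}. The only cosmetic difference is bookkeeping: the paper does a level-by-level induction (at most $U$ vertex nodes per level over at most $U$ levels, yielding an aggregate bound $\sim U^3/n$), whereas you bound the total number of index nodes processed before abandonment by $\ell U$ directly (yielding $\sim U^2/n$); both are comfortably below the target $(\log n)^4/n$. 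Your last paragraph correctly identifies — and handles — the one genuinely delicate point, namely that the per-step bounds require $|\cVv|\le U$, which is maintained because $\cVv=\tpvV$ as long as the coupling is intact, and one abandons as soon as this threshold is exceeded (phrasing it as ``conditioning on $\cE$'' is slightly loose, since $\cE$ is determined only at the end of the run, but the abandonment argument you describe is exactly the rigorous version of this, and it matches what the paper does in its final paragraph).
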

\begin{proof}%
We write $T^i$ for the \emph{restriction} of a rooted tree $T$ to all vertices 
within distance at most $i$ from the root. Let $\cVv^i$ and $\tpvV^i$ denote 
the vertex nodes in $\cTv^i$ and $\tpv^i$, respectively. 
Recall that $U=(\log n)^{6/5}$. Since $\beta > 1$, note that $L_1(F) \le U$ 
follows from \eqref{eq:NK:tailsum} for $n \ge n_0(B,\beta)$.

We inductively couple $\cTv^{4i+1}$ and $\tpv^{4i+1}$ for $0 \le i \le U$ 
so that with probability at least $1-i \cdot 5\ell^3 U^2 /n$ we have either 
$\max\{|\cVv^{4i+1}|,|\tpvV^{4i+1}|\} \ge U$, or $\cTv^{4i+1}=\tpv^{4i+1}$ with 
all $S_{j,w}$ of $\cTv^{4i+1}$ being good. 
The base case $i=0$ is straightforward, as both use the same procedure for 
generating the root and its children. 
Now suppose that we have constructed $\cTv^{4i+1}$ and $\tpv^{4i+1}$ coupled 
as above. In the following we sequentially consider vertex nodes $w$ at 
distance $4i+1$ from the root and extend the coupling to their descendants with 
distance up to $4(i+1)+1$; here we clearly may assume 
$|\cVv^{4i+1}|=|\tpvV^{4i+1}| < U$. For each vertex node $w$ we create $\ell$ 
index nodes $w_1, \ldots w_{\ell}$ (of types $1, \ldots, \ell$). We abandon our 
coupling whenever we have found more than $U$ vertex nodes (in which case we 
are done), so \eqref{eq:dtv:Sjw} holds. Thus we can couple $S_{j,w}$ and 
$\St_{j,w}$ so that they agree with probability at least $1-\ell U/n$. Now we 
also abandon our coupling whenever $S_{j,w}$ is bad, which happens with 
probability at most $4 \ell^2 U/n$ by \eqref{eq:bad:Sjw}. The point is that 
given good $S_{j,w}=\St_{j,w}$, in both cases the same deterministic 
construction is used for generating the descendants of $w_j$ with distance up 
to $4(i+1)+1$ from the root. 
So, by repeating this for $w_1, \ldots, w_{\ell}$, with probability at least 
$1-5 \ell^3 U/n$ we can couple the descendants of $w$ with distance up to 
$4(i+1)+1$ from the root. 
Since we follow this argument for each of the at most $U$ vertex nodes at 
distance $4i$ from the root, we see that we can extend our coupling to 
$\cTv^{4(i+1)+1}$ and $\tpv^{4(i+1)+1}$ with probability at least 
$1-5\ell^3 U^2 /n$, establishing the claim.

Finally, by Lemma~\ref{lem:Tail:1} we know that $V=\max\{|\cVv|,|\tpvV|\} < U/10$ 
holds with probability at least, say, $1-n^{-9}$ for $n \ge n_0(a,A)$. This 
together with the above coupling completes the proof (as there are no vertex 
nodes with distance larger than $4V+1$ from the root). 
\end{proof}

We now introduce an idealized `infinite' version $\bp[{\varphi,t}]$ of the 
exploration tree that is defined \emph{without} reference to $n$ or $F$, and 
in which `bad' things (such as `ignored' component nodes) cannot happen by 
definition. Let $R$ be the random variable with $\Pr(R=k) = \varphi(k)$ for 
each $k \ge 1$, where $\varphi$ is given by Theorem~\ref{thm:evolution}. 
We start $\bp[{\varphi,t}]$ with a root node and add $R$ vertex nodes as 
children. Then, given any vertex node, we deterministically create $\ell$ 
children (index nodes of types $1, \ldots, \ell$). Each of these, 
independently, has $Z \sim \Po(t)$ children (tuple nodes). For each of these 
grandchildren we assign again (deterministically) $\ell-1$ children (component 
nodes of types $1, \ldots, \ell-1$). All of these, independently, give birth 
to $R$ many descendants (vertex nodes).

For our subsequent analysis it will be key to observe that if we are only 
interested in equality up to isomorphisms, then we can generate $\tpv$ in 
a more convenient way, similarly to $\bp$. Indeed, using standard properties of 
Poisson processes and noting that selecting a uniform tuple 
$\uu =(u_1, \ldots, u_\ell)$ with $u_j=w$ is equivalent to picking $\ell-1$ 
random vertices, we can generate the descendants of $w_j$ constructed by 
$\St_{j,w}$ using the following three-generation tree process: the root has 
$Z \sim \Po(\ts)$ children (tuple nodes); then for each of the resulting 
children we construct (deterministically) $\ell-1$ grandchildren (component 
nodes of types $1, \ldots, \ell-1$), which each in turn give birth to $N$ 
descendants (vertex nodes), where $N \sim |C_u(F)|$ for a uniformly and 
independently chosen vertex $u$. Comparing the resulting construction with 
$\bp$, it follows that we can generate $\tpv$ up to relabellings in the same 
way as $\bp$, with the only difference that we use $N$ instead of $R$. 
\begin{proof}[Proof of Lemma~\ref{lem:Tail:1}]
Since $\cVv$ is dominated by (may be regarded as a subset of) $\tpvV$, 
we have $\Pr(|\cVv| \ge s) \le \Pr(|\tpvV| \ge s)$. 
Using this inequality, we claim that it is enough to prove existence of 
$a,A > 0$ (depending only on $\ell,L,\sigma,\beta,B$) satisfying 
\begin{equation}
\label{eq:tail}
\Pr(|\tpvV| \ge s) \le Ae^{-as} \quad \text{for all $s \ge 0$. }
\end{equation}
Indeed, recall that $v$ is chosen uniformly at random, so that 
$\Pr( |C_v(H_{\ts}^{\cR})| \ge s \mid H_{\ts}^{\cR}=G) = N_{\ge s}(G)/n$. 
Taking expectations, we see that $\E N_{\ge s}(H_{\ts}^{\cR}) = n \Pr( |C_v(H_{\ts}^{\cR})| \ge s)$. 
Using \eqref{eq:Vv} we have $|C_v(H_{\ts}^{\cR})| \le |\cVv|$, 
so $\Pr( |C_v(H_{\ts}^{\cR})| \ge s) \le Ae^{-as}$ by \eqref{eq:tail}. 
Now Markov's inequality gives $\Pr(L_1(H_{\ts}^{\cR}) \ge s) \le Ae^{-as} n$.

In the remainder we establish \eqref{eq:tail} using 
Lemmas~\ref{lem:bp:twogeneration} and~\ref{lem:bp:tree:Z}. Let $Z_j$ be 
independent copies of $Z \sim \Po(\ts)$, and let $v_{j,r,k}$ be uniformly 
and independently chosen random vertices. We henceforth construct $\tpv$ up to 
relabellings, as described in the paragraph proceeding this proof. Given a 
vertex node $w$ with distance $4i+1$ from the root, in this tree construction 
it has 
\[
W = \sum_{1 \le j \le \ell} \sum_{1 \le r \le Z_{j}} \sum_{1 \le k \le \ell-1} |C_{v_{j,r,k}}(F)| 
\]
vertex node descendants at distance $4(i+1)+1$ from the root, where 
$\E(W)=\ell \ts (\ell-1) S(F) \le \ell \sigma (\ell-1) L < 1$ 
due to $\ts \le \sigma$ and \eqref{eq:Nk:sigma}. 
Note that $F_W(z)=[F_{Z}([F_N(z)]^{\ell-1})]^\ell$, where 
$N \sim |C_u(F)|$ for a uniformly chosen vertex $u$. 
By \eqref{eq:NK:tailsum} we have $[F_N(\beta)]^{\ell-1} \le B^{\ell-1}$. 
Now, since $Z \sim \Po(\ts)$ and $0 \le \ts \le \sigma$, it easily 
follows that $F_{Z}(z) = e^{\ts(z-1)} \le e^{\sigma z}$ for $z \ge 0$, 
so $F_W(\beta) \le \tilde{B}=\tilde{B}(\ell,\sigma,B)$. 
Let $W^{+}$ be the size of the Galton--Watson branching process in which 
each node, independently, has $W$ children. Lemma~\ref{lem:bp:tree:Z} yields 
$F_{W^{+}}(\delta) \le D$, where $\delta > 1$ and $D>0$ depend only on 
$\ell,L,\sigma,\beta,\tilde{B}$. 
Since the distribution of $W$ does not depend on the $w$ or $i$ considered 
above, it in particular follows that each vertex node with distance $1$ from 
the root has $W^{+}$ vertex node descendants in $\tpv$.

Finally, note that $\tpv$ starts with a root vertex which gives birth to $N$ 
vertex node children, each of whose vertex nodes descendants is given by 
independent copies of $W^{+}$. With this in mind $|\tpvV| \sim T$, where 
$T$ is a two-generation branching process where the root has $N$ children, 
and then each of these, independently, has $W^{+}$ children. Recall that 
$F_N(\beta) \le \tilde{B}$ and $F_{W^{+}}(\delta) \le D$ for 
$\beta,\delta > 1$ and $\tilde{B},D > 0$. So, Lemma~\ref{lem:bp:twogeneration} 
yields \eqref{eq:tail} for $A=\tilde{B}$ and $a > 0$ depending only on 
$\beta,\delta,\tilde{B},D$. As explained, this completes the proof. 
\end{proof}

Recall that $\bp$ uses the same construction as $\tpv$, with the difference 
that it employs $R$ instead of $N$. 
When establishing the exponential decay in the proof of Lemma~\ref{lem:Tail:1}, 
note that the only properties of $N$ used are $\E N = S(F) \le L$ and 
$F_N(\beta) \le B$. Since $\E R = \chi(\varphi) \le L$ and 
$F_R(\beta) \le B$ by \eqref{eq:varphi:technical}--\eqref{eq:varphi:chi}, the 
same argument thus carries over word-by-word when applied to the vertex nodes 
of $\bp$, which we denote by $\bpV$. 
\begin{lemma}\label{lem:Tail:2}
Suppose that \eqref{eq:varphi:technical}--\eqref{eq:varphi:chi} and 
\eqref{eq:Nk:sigma} hold with $\beta > 1$. 
There exist $a,A > 0$ (depending only on $\ell,L,\sigma,\beta,B$) such that 
for all $0 \le \ts \le \sigma$ and $s \ge 0$ we have 
$\Pr(|\bpV| \ge s) \le Ae^{-as}$, where $a,A$ are defined in the same way 
as in Lemma~\ref{lem:Tail:1}. 
\nopf
\end{lemma}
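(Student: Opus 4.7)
The plan is to mirror the proof of Lemma~\ref{lem:Tail:1} essentially verbatim, with the random variable $R$ defined by $\Pr(R=k)=\varphi(k)$ playing the role that $N\sim|C_u(F)|$ played there. Recall from the paragraph preceding that proof that $\bp$ is constructed in the same way as $\tpv$ (up to relabellings), with the sole difference that each component node gives birth to $R$ vertex-node descendants rather than to $N$. In particular, $\bp$ is well defined without reference to $n$ or $F$, and its structure depends on $\varphi$, $\cR$ and $\ell$ alone.

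First I would isolate the only two analytic facts about $N$ that entered the proof of Lemma~\ref{lem:Tail:1}: namely $\E N=S(F)\le L$ and $F_N(\beta)\le B$. The analogues for $R$ are given for free by the hypotheses: \eqref{eq:varphi:chi} yields $\E R=\chi(\varphi)\le L$, and \eqref{eq:varphi:tailsum} yields $F_R(\beta)\le B$. Then, copying the earlier argument, I would consider a fixed vertex node $w$ at distance $4i+1$ from the root of $\bp$ and write the number $W$ of its vertex-node descendants at distance $4(i+1)+1$ as
\[
W \;=\; \sum_{1\le j\le\ell}\sum_{1\le r\le Z_j}\sum_{1\le k\le\ell-1} R_{j,r,k},
\]
with independent $Z_j\sim\Po(\ts)$ and $R_{j,r,k}$ independent copies of $R$. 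The subcritical estimate $\E W=\ell(\ell-1)\ts\,\E R\le\ell(\ell-1)\sigma L<1$ then follows from \eqref{eq:Nk:sigma}, while the factorization $F_W(z)=[F_Z(F_R(z)^{\ell-1})]^{\ell}$ combined with $F_Z(z)=e^{\ts(z-1)}\le e^{\sigma z}$ yields $F_W(\beta)\le\tilde B$ for some $\tilde B=\tilde B(\ell,\sigma,B)$.

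Finally, I would apply Lemma~\ref{lem:bp:tree:Z} to $W$ to extract $\delta>1$ and $D>0$ (depending only on $\ell,L,\sigma,\beta,B$) such that the Galton--Watson total-size variable $W^{+}$ satisfies $F_{W^{+}}(\delta)\le D$, and then apply Lemma~\ref{lem:bp:twogeneration} with $X=R$ and $Y=W^{+}$ to the two-generation branching process whose total size is distributed as $|\bpV|$. This produces $a,A>0$, depending only on $\ell,L,\sigma,\beta,B$, with $\Pr(|\bpV|\ge s)\le Ae^{-as}$, which is exactly the desired bound. No genuine obstacle appears here: the real analytic work was already done inside Lemma~\ref{lem:Tail:1}, and the present lemma is a careful bookkeeping exercise confirming that the hypotheses on $N$ translate unchanged into hypotheses on $R$ via \eqref{eq:varphi:tailsum} and \eqref{eq:varphi:chi}, so that the same two applications of Lemmas~\ref{lem:bp:tree:Z} and~\ref{lem:bp:twogeneration} go through with the same constants.
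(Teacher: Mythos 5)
Your proof is correct and follows precisely the same approach as the paper, which merely remarks (in the paragraph preceding Lemma~\ref{lem:Tail:2}) that the argument of Lemma~\ref{lem:Tail:1} carries over word-by-word because the only properties of $N$ used there were $\E N = S(F)\le L$ and $F_N(\beta)\le B$, and these hold for $R$ by \eqref{eq:varphi:chi} and \eqref{eq:varphi:tailsum}. You simply spell out that carry-over explicitly; the substance, including the two applications of Lemmas~\ref{lem:bp:tree:Z} and~\ref{lem:bp:twogeneration}, is identical.
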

After these preparations, we are now ready to show that we can couple $\cTv$ 
and $\bp$ so that they typically agree up to isomorphisms (by using $\tpv$ 
as an `intermediate' process). 
\begin{lemma}\label{lem:Cpl:2}
Suppose $n \ge n_0(\ell,L,\sigma,\beta,B)$ and that the assumptions of 
Theorem~\ref{thm:evolution} as well as \eqref{eq:ts} hold. 
There exists a coupling of $\cTv$ and $\bp$ so that with probability at 
least $1-(\log n)^{C+5} n^{-1/2}$ we have $\cTv \cong \bp$ and $\cTv$ is good. 
\end{lemma}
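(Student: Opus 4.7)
The plan is to use $\tpv$ as an intermediate process. Lemma~\ref{lem:Cpl:1} already couples $\cTv$ with $\tpv$ so that $\cTv=\tpv$ (and $\cTv$ is good) with probability at least $1-(\log n)^{4}/n$, so it suffices to build a coupling of $\tpv$ with $\bp$, up to isomorphisms, that succeeds with probability at least $1-O((\log n)^{C+3}n^{-1/2})$; composing the two and applying a union bound then gives the lemma with room to spare.

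For this I would work with the three-generation description of $\tpv$ given just before the proof of Lemma~\ref{lem:Tail:1}. Up to relabellings, both $\tpv$ and $\bp$ are produced by the same recursive recipe: the root has some vertex-node children; each vertex node deterministically spawns $\ell$ index nodes; each index node spawns $Z\sim\Po(\ts)$ tuple-node children; each tuple node deterministically spawns $\ell-1$ component nodes; and each component node recursively spawns some vertex-node children. The \emph{only} randomness that differs between the two constructions is the number of vertex-node offspring: $N\sim|C_u(F)|$ (with $u$ chosen uniformly and independently) in $\tpv$, versus $R$ with $\Pr(R=k)=\varphi(k)$ in $\bp$.

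The central quantitative step is bounding $\dtv{N}{R}$. Choosing $M=\lceil 2\log n/\log\beta\rceil$, the split
\[
\dtv{N}{R}\le \tfrac{1}{2}\sum_{k\le M}\bigl|N_k(F)/n-\varphi(k)\bigr| + \tfrac{1}{2}\bigl(N_{\ge M+1}(F)/n + {\textstyle\sum_{k>M}}\varphi(k)\bigr)
\]
together with \eqref{eq:Nk:approx} (which controls each of the first $M=O(\log n)$ terms by $(\log n)^{C}n^{-1/2}$) and the exponential tails coming from \eqref{eq:varphi:tailsum} and \eqref{eq:NK:tailsum} (which bound the tails by $2B\beta^{-M}=O(n^{-2})$) yields $\dtv{N}{R}=O((\log n)^{C+1}n^{-1/2})$. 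I would then explore both trees in breadth-first order in parallel, using the identity coupling at each $Z$-sample and the optimal (maximal) coupling at each $N/R$-sample, aborting as soon as either tree has more than $(\log n)^{2}$ vertex nodes. By Lemmas~\ref{lem:Tail:1} and~\ref{lem:Tail:2} the aborting event has probability at most $n^{-9}$, say, and conditional on not aborting there are at most $(\log n)^{2}$ vertex-node samples and hence at most $(\log n)^{2}$ component-node samples (since $N,R\ge 1$ forces each component node to have at least one vertex child). A union bound over these $O((\log n)^{2})$ sampling opportunities yields failure probability $O((\log n)^{C+3}n^{-1/2})$, well inside the claimed $(\log n)^{C+5}n^{-1/2}$.

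The main obstacle is conceptual rather than computational: organizing the coupling \emph{up to isomorphism}, so that labels of the initial graph $F$ never need to be matched. Once one commits to the three-generation relabelled construction of $\tpv$ and pairs it with the analogous construction of $\bp$, the only source of discrepancy is the $N$-versus-$R$ substitution, and the bound on $\dtv{N}{R}$ combined with the uniform tail bounds of Lemmas~\ref{lem:Tail:1}--\ref{lem:Tail:2} closes the argument.
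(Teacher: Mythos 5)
Your proposal is correct and follows essentially the same strategy as the paper's proof: both use $\tpv$ as the intermediate process, reduce everything to the single discrepancy between the offspring distributions $N$ and $R$, bound $\dtv{N}{R}$ by splitting at a polylogarithmic cutoff (your $M$ versus the paper's $U$) and invoking \eqref{eq:Nk:approx} together with the exponential tails, and then run a BFS/level-by-level coupling that is aborted, with negligible probability by Lemmas~\ref{lem:Tail:1} and~\ref{lem:Tail:2}, once the trees grow beyond polylogarithmic size. Your observation that $N,R\ge 1$ directly caps the number of $N/R$ coupling attempts by the vertex-node count is a small tidy-up of the paper's level-by-level accounting (which instead introduces an extra abort condition when some index node has $Z\ge U$ children), but the argument is otherwise the same.
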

\begin{proof}
Recall that $U=(\log n)^{6/5}$. 
By Lemma~\ref{lem:Cpl:1} it suffices to couple $\tpv$ and 
$\bp$ so that with probability at least $1-4 \ell^2 U^4(\log n)^{C} n^{-1/2}$ we 
have $\tpv \cong \bp$. To this end we use a similar but simpler argument as in 
the proof of Lemma~\ref{lem:Cpl:1}, inductively extending our coupling from 
distance $4i+1$ to $4(i+1)+1$ from the root. As before, using Lemma~\ref{lem:Tail:1} 
and~\ref{lem:Tail:2} we can safely abandon our coupling whenever we have seen 
at least $U$ vertex nodes, or when we reach distance $U$ from the root. 
In the inductive step, the only difference between $\bp$ and $\tpv$ is that 
$\bp$ uses $R$ whereas $\tpv$ uses $N$. Recall that $\Pr(R=k) = \varphi(k)$ and 
$\Pr(N=k) = N_k(F)/n$. It is not difficult to see that \eqref{eq:varphi:tailsum} 
and \eqref{eq:NK:tailsum} imply $\Pr(R \ge U) \le n^{-2}$ and 
$\Pr(N \ge U) = 0$ for $n \ge n_0(\beta,B)$. Using these tail estimates 
together with \eqref{eq:Nk:approx}, by distinguishing values smaller and larger 
than $U$ we obtain 
\begin{equation}
\label{eq:dtv:RN}
\dtv{R}{N} \le U \cdot (\log n)^C n^{-1/2} + n^{-2} \le 2 U (\log n)^{C} n^{-1/2} . 
\end{equation}
We furthermore may safely abandon our coupling whenever some index node has 
$Z \ge U$ children, since (using $\ts \le \sigma$) this occurs with 
probability at most $n^{-9}$ for $n \ge n_0(\sigma)$. The point is that this 
ensures that we only need to couple $R$ and $N$ at most $\ell^2 U^2$ times when 
going from distance $4i+1$ to $4(i+1)+1$. So, each time we can extend the 
coupling inductively with probability at least, say, 
$1-3 \ell^2 U^3 (\log n)^{C} n^{-1/2}$. Arguing as in the proof of 
Lemma~\ref{lem:Cpl:1}, this completes the coupling argument. 
\end{proof}

\subsubsection{Expected component sizes} 
After analyzing the tuple and component structure induced by $E_{\ts}$, we 
now consider the second exposure round, where the selected tuples are 
presented in random order to $\cR$. Intuitively, the coupling given by 
Lemma~\ref{lem:Cpl:2} allows us to estimate $\E N_k(H_{\ts}^{\cR})$ 
using $\bp$. As we shall see, this also carries over to 
$\E N_k(F_{\ts n}^{\cR})$.

Recall that if the exploration tree $\cTv \cong T$ is good, then during its 
construction no component nodes are ignored. As mentioned in 
Section~\ref{sec:CEP}, the key point is that if no nodes are ignored (i.e., 
all component nodes have at least one child), then from the structure of $T$ 
(which includes the vertex types) we can reconstruct all tuples in $E_{\ts}$ 
and component sizes of $F$ (up to relabellings) which are relevant for 
determining $|C_v(H_{\ts}^{\cR})|$. We denote the corresponding set of tuples 
and component sizes by $\cT_T$ and $\cC_T$, respectively. 
As the above `reconstruction' procedure only uses the tree-structure of $T$, 
it in fact can be applied to any exploration tree in which each component node 
has at least one child; so, in particular, to $\bp\cong T$. 
In the following we define $|C^\cR(T)|$ for any exploration tree $T$, where 
we formally set $|C^\cR(T)|=0$ if $T$ contains a component node with $0$ 
descendants. Otherwise, we traverse in (uniform) random order the tuples in 
$\cT_T$; for each tuple we present the component sizes of its vertices to $\cR$ 
and update the list of components (and their sizes) according to the decisions 
of $\cR$ (by adding the pairs selected by $\cR$). 
Finally, we define $|C^\cR(T)|$ as the size of the resulting component 
which contains the root vertex of $T$. 
Since the second exposure round of $H_{\ts}^{\cR}$ presents the tuples in 
$E_{\ts}$ to $\cR$ in random order, a moment's thought reveals that 
conditional on $\cTv \cong T$ being good, both $|C_v(H_{\ts}^{\cR})|$ and 
$|C^\cR(\cTv)|$ have exactly the same distribution for size rules. 
So, for all $k \ge 1$ we have 
\begin{equation}\label{eq:C}
\Pr( |C_v(H_{\ts}^{\cR})|=k \mid \text{$\cTv \cong T$ is good}) = \Pr( |C^\cR(\cTv)|=k \mid \text{$\cTv \cong T$ is good}) .
\end{equation}
Before using this observation to estimate $\E N_k(H_{\ts}^{\cR})$, we first 
collect some basic properties of the function $\varrho$, where we set 
\begin{equation}\label{eq:varrho}
\varrho(k,t)=\Pr(|C^\cR(\bp[{\varphi,t}])|=k) \quad \text{for all $(k,t) \in \NN \times \Rp$.}
\end{equation}
\begin{lemma}\label{lem:varrho:prop}
Suppose that \eqref{eq:varphi:technical}--\eqref{eq:varphi:chi} and 
\eqref{eq:Nk:sigma} hold with $\beta > 1$. 
The function $\varrho:\NN \times \Rp \to [0,1]$ defined in \eqref{eq:varrho} 
depends only on $\varphi,\cR,\ell$ and satisfies $\sum_{k \ge 1} \varrho(k,\ts) = 1$ 
for all $0 \le \ts \le \sigma$. 
Furthermore, there exist $a,A > 0$ (depending only on $\ell,L,\sigma,\beta,B$) 
such that for all $0 \le \ts \le \sigma$ and $s \ge 0$ we have 
$\varrho(s,\ts) \le Ae^{-as}$, where $a,A$ are given by Lemma~\ref{lem:Tail:2}. 
\end{lemma}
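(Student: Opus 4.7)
The plan is to read all three assertions directly off the definition of $\bp$ together with Lemma~\ref{lem:Tail:2}; this is essentially a bookkeeping lemma with no serious obstacle, the only mildly delicate point being the verification that $\sum_{k\ge1}\varrho(k,t)=1$, for which I need to simultaneously rule out that $\bp$ is infinite and that some component node of $\bp$ has no vertex-node children.

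First, I will read off that $\varrho$ depends only on $\varphi,\cR,\ell$ from the construction of $\bp$ in Section~\ref{sec:CEP}: the offspring variables at vertex-node stages ($R$, with $\Pr(R=k)=\varphi(k)$) and at index-node stages ($Z\sim\Po(t)$) are specified without reference to $n$ or $F$, and the deterministic procedure producing $|C^{\cR}(T)|$ from a tree $T$ equipped with a uniformly random ordering of its tuple nodes depends only on $\cR$. Hence the distribution of $|C^{\cR}(\bp)|$, and so $\varrho(\cdot,t)$, is a function of $(\varphi,\cR,\ell,t)$ alone.

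Second, I will obtain the exponential tail from the deterministic inequality $|C^{\cR}(\bp)|\le|\bpV|$: whenever the procedure does not output $0$ it returns the size of a component of a graph whose vertex set is contained in $\bpV$, and when it outputs $0$ the bound is trivial. Applying Lemma~\ref{lem:Tail:2} then gives
\begin{equation*}
\varrho(s,t)\le\Pr(|C^{\cR}(\bp)|\ge s)\le\Pr(|\bpV|\ge s)\le Ae^{-as}
\end{equation*}
with exactly the constants $a$, $A$ supplied by that lemma. A useful byproduct is that $|\bpV|<\infty$ almost surely, and in particular $\bp$ almost surely contains only finitely many component nodes.

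Third, I will establish the total mass identity by showing $|C^{\cR}(\bp)|\ge1$ almost surely. Since the root vertex always contributes itself to the component that the procedure builds around it, the procedure can only return $0$ if some component node of $\bp$ has no vertex-node children. Each component node independently receives $R$ children, and by \eqref{eq:varphi:technical} we have $\Pr(R=0)=1-\sum_{k\ge1}\varphi(k)=0$. Combined with the a.s.\ finiteness of $\bp$ from the previous step, a union bound over the (almost surely finitely many) component nodes of $\bp$ rules out the bad event, giving $|C^{\cR}(\bp)|\ge1$ almost surely, and hence $\sum_{k\ge1}\varrho(k,t)=1$ for every $0\le t\le\sigma$.
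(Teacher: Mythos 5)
Your proof is correct and follows essentially the same route as the paper's: $\varrho$ depends only on $\varphi,\cR,\ell$ by inspection of the constructions, the tail bound is read off from $|C^\cR(\bp)|\le|\bpV|$ together with Lemma~\ref{lem:Tail:2}, and $\sum_{k\ge1}\varrho(k,t)=1$ follows from $|C^\cR(\bp)|$ being a.s.\ a finite positive integer. You are slightly more explicit than the paper on one point — you verify that the formal convention $|C^\cR(T)|=0$ (triggered by a component node with no children) occurs with probability zero because $\Pr(R=0)=0$, whereas the paper simply asserts $1\le|C^\cR(\bp)|$ — but this is the same argument made precise, not a different one.
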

\begin{proof}
The definitions of $C^\cR(\cdot)$ and of $\bp[{\varphi,t}]$ depend only on 
$\cR,\ell$ and on $\varphi,\cR,\ell,t$ respectively. So, from \eqref{eq:varrho} 
we see that $\varrho:\NN \times \Rp \to [0,1]$ depends only on 
$\varphi,\cR,\ell$. 
Since the component containing the root vertex of $\bp$ can only 
contain vertex nodes of $\bp$, we see that $1 \le |C^\cR(\bp)| \le |\bpV|$ 
holds, from which $\varrho(0,\ts)=0$ follows. Furthermore, Lemma~\ref{lem:Tail:2} 
implies $\varrho(s,\ts) \le \Pr(|\bpV| \ge s) \le Ae^{-as}$ for all $s \ge 1$, 
where $a,A > 0$ depend only on $\ell,L,\sigma,\beta,B$. Similarly, for all 
$s \ge 0$ we have $\Pr(\text{$\bp$ is infinite}) \le \Pr(|\bpV| \ge s) \le Ae^{-as}$. 
But $Ae^{-as} \to 0$ as $s \to \infty$, so $\Pr(\text{$\bp$ is infinite})=0$, 
which in turn yields $\sum_{k \ge 1} \varrho(k,\ts) = 1$. 
\end{proof}
\begin{lemma}\label{lem:Nk:E}
Suppose $n \ge n_0(\ell,L,\sigma,\beta,B)$ and that the assumptions of 
Theorem~\ref{thm:evolution} as well as \eqref{eq:ts} hold. We have 
\begin{equation}\label{eq:Nk:E}
\E N_k(H_{\ts}^{\cR}) = \varrho(k,\ts) n \pm (\log n)^{C+6} n^{1/2} \quad \text{for all $k \ge 1$.}
\end{equation}
\end{lemma}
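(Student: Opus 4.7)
The plan is to convert $\E N_k(H_{\ts}^{\cR})$ into a probability concerning the component of a uniformly chosen vertex, and then transport that probability across the two couplings already available: the one in \eqref{eq:C} (linking $|C_v(H_{\ts}^{\cR})|$ to the `second round' outcome on $\cTv$ when $\cTv$ is good) and the one in Lemma~\ref{lem:Cpl:2} (linking $\cTv$ to the idealized tree $\bp[{\varphi,\ts}]$). The endpoint of that transport is $\Pr(|C^{\cR}(\bp[{\varphi,\ts}])|=k)$, which by \eqref{eq:varrho} is exactly $\varrho(k,\ts)$.

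Concretely, I would first record that since $v$ is uniform and $H_{\ts}^{\cR}$ is independent of $v$,
\[
\E N_k(H_{\ts}^{\cR}) = n \cdot \Pr\bigl(|C_v(H_{\ts}^{\cR})|=k\bigr).
\]
Let $E$ be the event, furnished by Lemma~\ref{lem:Cpl:2}, that the coupling of $\cTv$ and $\bp[{\varphi,\ts}]$ succeeds (so $\cTv\cong\bp[{\varphi,\ts}]$ and $\cTv$ is good); then $\Pr(E^c)\le (\log n)^{C+5}n^{-1/2}$. Splitting on $E$ and $E^c$, the $E^c$ contribution is at most $\Pr(E^c)$, contributing an error of at most $(\log n)^{C+5}n^{1/2}$ to $\E N_k(H_{\ts}^{\cR})$.

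On $E$, I would condition on the (isomorphism class of the) tree and apply \eqref{eq:C}: for any good tree $T$,
\[
\Pr\bigl(|C_v(H_{\ts}^{\cR})|=k\,\bigm|\,\cTv\cong T\bigr) = \Pr\bigl(|C^{\cR}(T)|=k\bigr),
\]
because the random order of the second exposure round of $H_{\ts}^{\cR}$ plays exactly the role of the uniform random ordering used in the definition of $|C^{\cR}(T)|$. Summing over $T$ and using $\Pr(\cTv\cong T,E)=\Pr(\bp[{\varphi,\ts}]\cong T,E)$ (which is the content of the coupling in Lemma~\ref{lem:Cpl:2}), one gets
\[
\Pr\bigl(|C_v(H_{\ts}^{\cR})|=k,\,E\bigr) = \E\!\left[\mathbbm{1}_{E}\cdot\Pr\bigl(|C^{\cR}(\bp[{\varphi,\ts}])|=k \bigm| \bp[{\varphi,\ts}]\bigr)\right].
\]
Removing the indicator $\mathbbm{1}_E$ costs at most $\Pr(E^c)\le(\log n)^{C+5}n^{-1/2}$ (again using that the inner conditional probability is in $[0,1]$), and the remaining unconditional expectation is precisely $\Pr(|C^{\cR}(\bp[{\varphi,\ts}])|=k)=\varrho(k,\ts)$ by \eqref{eq:varrho}.

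Combining the two rounds of error, for $n\ge n_0(\ell,L,\sigma,\beta,B,C)$,
\[
\bigl|\E N_k(H_{\ts}^{\cR}) - \varrho(k,\ts)n\bigr| \le 2(\log n)^{C+5}n^{1/2} \le (\log n)^{C+6}n^{1/2},
\]
as required. The genuinely delicate point is the conditional-distribution identity: one must verify that the isomorphism $\cTv\cong\bp[{\varphi,\ts}]$ delivered by Lemma~\ref{lem:Cpl:2} actually preserves the information (types of tuple and component nodes, multiplicities of tuples, and the correspondence between component nodes and component sizes of $F$) needed to reconstruct $\cT_T$ and $\cC_T$ unambiguously, so that $|C^{\cR}(\cdot)|$ really is a function of the isomorphism class. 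Once that bookkeeping is accepted, everything else is a short application of \eqref{eq:C}, Lemma~\ref{lem:Cpl:2}, and \eqref{eq:varrho}, with no further estimates beyond trivially bounding probabilities by $\Pr(E^c)$.
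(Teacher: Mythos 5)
Your argument follows the same route as the paper's: reduce $\E N_k(H_{\ts}^{\cR})$ to $n\Pr(|C_v(H_{\ts}^{\cR})|=k)$, relate that to $\Pr(|C^{\cR}(\cTv)|=k)$ via \eqref{eq:C} (which requires $\cTv$ good), and relate that to $\Pr(|C^{\cR}(\bp[{\varphi,\ts}])|=k)=\varrho(k,\ts)$ via the coupling of Lemma~\ref{lem:Cpl:2}, paying $O((\log n)^{C+5}n^{-1/2})$ per transfer. The paper presents these two transfers sequentially through the intermediate quantity $\Pr(|C^{\cR}(\cTv)|=k)$, whereas you merge them into a single decomposition over the coupling event $E$; this is a cosmetic rearrangement, not a different argument. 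Your observation at the end that one must check the isomorphism preserves all structural data (node types, multiplicities, component sizes) needed to define $|C^{\cR}(\cdot)|$ as a function of the isomorphism class, and that the second-round ordering remains uniform conditional on the first-round exploration and on $E$, is exactly the content the paper compresses into the phrase ``using \eqref{eq:C} it follows that'' — so you have correctly located, rather than missed, the one step that needs justification.
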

\begin{proof}
Similar as in the proof of Lemma~\ref{lem:Tail:1}, since $v$ is chosen 
uniformly at random we have 
$\E N_k(H_{\ts}^{\cR}) = n \Pr( |C_v(H_{\ts}^{\cR})|=k )$. 
To prove the claim it thus suffices to relate $\Pr( |C_v(H_{\ts}^{\cR})|=k )$ 
and $\varrho(k,\ts)=\Pr(|C^\cR(\bp)|=k)$. The coupling of Lemma~\ref{lem:Cpl:2} 
implies that $\bp \cong \cTv$ holds with probability at least 
$1-(\log n)^{C+5} n^{-1/2}$ for $n \ge n_0(\ell,L,\sigma,\beta,B)$. Hence
\[
\Pr( |C^\cR(\cTv)|=k ) = \Pr( |C^\cR(\bp)|=k ) \pm 2(\log n)^{C+5} n^{-1/2} .
\]
Since this coupling also implies that $\cTv$ is good, using \eqref{eq:C} it 
follows that 
\[
\Pr( |C_v(H_{\ts}^{\cR})|=k ) = \Pr( |C^\cR(\cTv)|=k ) \pm 2(\log n)^{C+5} n^{-1/2}. 
\]
Finally, combining our findings and recalling \eqref{eq:varrho}, we readily 
obtain \eqref{eq:Nk:E}. 
\end{proof}
Now we relate $H_{\ts}^{\cR}$ with $F_{\ts n}^{\cR}$ by establishing that 
$\E N_k(H_{\ts}^{\cR}) \approx \E N_k(F_{\ts n}^{\cR})$. 
\begin{lemma}\label{lem:Nk:E:transfer}
Suppose that $0 \le \ts \le 1$. Then for $n \ge n_0(\ell)$ we have 
\begin{equation}
\label{eq:Nk:E:transfer}
\E N_k(F_{\ts n}^{\cR}) = \E N_k(H_{\ts}^{\cR}) \pm k (\log n) n^{1/2} \quad \text{for all $k \ge 1$.}
\end{equation}
\end{lemma}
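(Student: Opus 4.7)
The plan is to exploit the fact that $|E_{\ts}| \sim \Po(\ts n)$ concentrates tightly around $\ts n$, combined with a one-step Lipschitz bound on $N_k$ for the discrete process $(F_{i}^{\cR})$.

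First I would establish the Lipschitz bound: since each step of the $\ell$-vertex size rule $\cR$ merges at most $\ell$ components of $F_{i}^{\cR}$ into a single new component, one step can destroy at most $\ell$ components of size $k$ and create at most one new component of size $k$. Hence
\[
\bigl|N_k(F_{i+1}^{\cR}) - N_k(F_{i}^{\cR})\bigr| \le (\ell+1)k.
\]
By the triangle inequality, $|\E N_k(F_{i}^{\cR}) - \E N_k(F_{j}^{\cR})| \le (\ell+1)k\,|i-j|$ for any $i,j \ge 0$.

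Next, I would use the identity \eqref{eq:Nk:transfer:Gi} to expand
\[
\E N_k(H_{\ts}^{\cR}) \;=\; \sum_{i \ge 0} \Pr(|E_{\ts}|=i)\,\E N_k(F_{i}^{\cR}),
\]
so that, since the probabilities sum to one,
\[
\E N_k(H_{\ts}^{\cR}) - \E N_k(F_{\ts n}^{\cR}) \;=\; \sum_{i \ge 0} \Pr(|E_{\ts}|=i) \bigl(\E N_k(F_{i}^{\cR}) - \E N_k(F_{\ts n}^{\cR})\bigr).
\]
The Lipschitz bound above, applied term-by-term and followed by Jensen's inequality, then yields
\[
\bigl|\E N_k(H_{\ts}^{\cR}) - \E N_k(F_{\ts n}^{\cR})\bigr| \;\le\; (\ell+1)k \cdot \E\bigl||E_{\ts}|-\ts n\bigr| \;\le\; (\ell+1)k\sqrt{\Var(|E_{\ts}|)} \;=\; (\ell+1)k\sqrt{\ts n},
\]
where we used $\Var(\Po(\ts n)) = \ts n$. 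Since $\ts \le 1$ and $(\ell+1) \le \log n$ for $n \ge n_0(\ell)$, this gives the stated bound $k(\log n)n^{1/2}$.

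The argument is essentially routine, and there is no real obstacle: the only ingredients are the trivial one-step Lipschitz bound, the standard variance of a Poisson variable, and the conditional identity \eqref{eq:Nk:transfer:Gi}. The only thing to be slightly careful about is matching constants, but the generous $(\log n)$-factor in the claimed bound absorbs all $\ell$-dependence for $n$ large.
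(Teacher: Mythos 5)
Your proof is correct and reaches the same bound, but by a genuinely different route than the paper. The paper proves the same one-step Lipschitz estimate ($N_k$ changes by at most $\ell k$ per step; your $(\ell+1)k$ is a slightly looser but equally valid constant), then applies a Chernoff bound to conclude that $|E_{\ts}|=\ts n\pm 3\sqrt{n\log n}$ except on an event of probability at most $n^{-2}$, and handles the exceptional event crudely via $N_k\le n$, yielding $\E N_k(H_{\ts}^{\cR})=\E N_k(F_{\ts n}^{\cR})\pm \ell k s\pm n\cdot n^{-2}$. You instead avoid any splitting into good and bad events: after conditioning on $|E_{\ts}|=i$ and applying the Lipschitz bound term by term, you bound the resulting expression by $(\ell+1)k\,\E\bigl||E_{\ts}|-\ts n\bigr|$ and pass to $\sqrt{\Var(|E_{\ts}|)}=\sqrt{\ts n}$ via Jensen (or Cauchy--Schwarz). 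Your approach is arguably cleaner — it uses only the second moment of the Poisson, avoids the $\sqrt{\log n}$ Chernoff overhead and the $n^{-1}$ remainder term, and deals uniformly with the full (unbounded) Poisson tail since the Lipschitz bound holds for all $i$. Both arguments comfortably fit inside the generous $(\log n)n^{1/2}$ error budget, which absorbs the $\ell$-dependence once $n\ge n_0(\ell)$.
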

\begin{proof}
Observe that $N_k$ changes by at most $\ell k$ per step. So, for $r \le s$ we 
have $\E(N_k(F_s^{\cR}) \mid F_r^{\cR}=G) = N_k(G) \pm (s-r) \ell k$. 
Taking expectations and restricting our attention to 
$r \in \{\ts n-i,\ts n\}$ shows that for each $i \ge 0$ we have 
\begin{equation}
\label{eq:Nk:transfer:Gm:liptschitz}
\E N_k(F_{\ts n \pm i}^{\cR}) = \E N_k(F_{\ts n}^{\cR}) \pm \ell k i .
\end{equation}
Set $s=3\sqrt{n \log n}$. Using $\ts \le 1$, standard Chernoff bounds 
yield that $|E_{\ts}| = \ts n \pm s$ with probability at least $1-n^{-2}$ 
for $n \ge n_0$. Combining this with \eqref{eq:Nk:transfer:Gi} and 
\eqref{eq:Nk:transfer:Gm:liptschitz}, we readily obtain 
\[
\E N_k(H_{\ts}^{\cR}) = \E N_k(F_{\ts n}^{\cR}) \pm \ell k s \pm n \cdot n^{-2} ,
\]
which implies \eqref{eq:Nk:E:transfer} for $n \ge n_0(\ell)$, with room to 
spare. 
\end{proof}

\subsubsection{Concentration of component sizes}
In this section we establish concentration of $N_k(F_i^{\cR})$ around its 
expected value. The main technical difficulty here is that few changes of the 
offered tuples might alter many decisions of size rules (as the component sizes 
observed in later rounds can change); as we shall see, the bounds for 
$L_1(\cdot)$ implied by Lemma~\ref{lem:Tail:1} will be a crucial ingredient for 
showing that this is typically not the case. 
\begin{lemma}\label{lem:Nk:C}
Suppose $n \ge n_0(\ell,L,\sigma,\beta,B)$ and that the 
assumptions of Theorem~\ref{thm:evolution} hold. 
With probability at least $1-n^{-250}$, for every $0 \le i \le \sigma n$ 
we have 
\begin{equation}
\label{eq:Nk:concentration}
N_k(F_i^{\cR}) = \E N_k(F_i^{\cR}) \pm (\log n)^2 n^{1/2} \quad \text{for all $1 \le k \le (\log n)^2$.}\\
\end{equation}
\end{lemma}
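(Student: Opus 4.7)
The concentration of $N_k(F_i^{\cR})$ is most naturally proved by a martingale/McDiarmid-type argument applied to the random tuples driving the process. My plan is first to transfer the problem to the continuous-time model $H_\ts^{\cR}$ via Pittel's inequality \eqref{eq:pittel}, and then to expose the tuples one at a time in arrival order and bound the resulting Doob-martingale differences using the local exploration-tree description of $H_\ts^{\cR}$ developed in Section~\ref{sec:CEP}. Uniformity over $0 \le i \le \sigma n$ and $1 \le k \le (\log n)^2$ can then be achieved from the Lipschitz bound $|N_k(F_{i+1}^{\cR})-N_k(F_i^{\cR})| \le \ell k$ together with a union bound over $\mathrm{poly}(n)$ equally spaced time points and polylogarithmically many values of $k$.

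The naive worst-case bound on the martingale differences is $O(n)$, which is useless. The key is that on a good event $\cE$ the differences are actually $(\log n)^{O(1)}$. I would define $\cE$ as the intersection of (i) the event that $L_1(H_\ts^{\cR}) \le D\log n$, and (ii) the event that $|\cVv| \le U^2$ for every vertex $v \in [n]$; both have probability at least $1-n^{-\omega(1)}$ by Lemma~\ref{lem:Tail:1} and a union bound over $v$. On $\cE$, altering a single tuple can change the component of a vertex $v$ only if that tuple lies, in either the original or the modified sample, inside the exploration tree rooted at $v$; hence the set of affected vertices has size $(\log n)^{O(1)}$ and $N_k$ changes by no more than $(\log n)^{O(1)}$.

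With an effective Lipschitz bound of order $(\log n)^{O(1)}$ on $\cE$ in hand, the plan is to apply a typical bounded differences inequality, using $(\log n)^{O(1)}$ as the Lipschitz constant on $\cE$ and the trivial constant $O(n)$ off $\cE$; since $\Pr(\cE^c) \le n^{-\omega(1)}$, the bad configurations contribute negligibly. Summing over the $O(n)$ martingale steps yields concentration at scale $(\log n)^{O(1)} n^{1/2}$ with probability at least $1-n^{-\omega(1)}$, and a final union bound gives the claimed failure probability $n^{-250}$.

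The main obstacle is establishing the effective Lipschitz bound rigorously, because for size rules a single altered tuple can in principle propagate its effect through all subsequent decisions of $\cR$, via the component sizes observed in later rounds. The resolution, already implicit in Section~\ref{sec:CEP}, is that $|C_v(H_\ts^{\cR})|$ is determined by the structure of $\cTv$ together with the order in which its tuples appear; thus $v$ is affected by an alteration of $\uu$ only if $\uu$ lies in the exploration tree rooted at $v$, for either the original or the perturbed sample. Making this \emph{before-vs.-after} localization symmetric and precise enough to give a uniform Lipschitz estimate along the whole exposure martingale is the technically most delicate step of the argument.
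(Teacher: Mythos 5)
Your high-level strategy (bounded differences over the sequence of tuples) is the right one, but there is a genuine gap at the heart of your argument, and the paper sidesteps it with a cleverer device than you propose.

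The gap: you want to apply a bounded-differences inequality to $f=N_k(\cdot)$ using a Lipschitz constant of $(\log n)^{O(1)}$ on a good event $\cE$ and $O(n)$ off $\cE$, and then argue that $\Pr(\cE^c)$ is small enough to kill the bad contribution. But McDiarmid's inequality requires a \emph{worst-case} bound on $|f(\omega)-f(\omega')|$; a bound that holds only when both $\omega$ and $\omega'$ lie in $\cE$ cannot simply be substituted. The obstruction is exactly at the boundary of $\cE$: if $\omega\in\cE$ but the single-coordinate perturbation $\omega'$ falls outside $\cE$ (for instance, the perturbed influence graph now has a large component through the altered tuple), the difference can be of order $n$, and there is no control on how often this happens along the martingale filtration. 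Extensions of McDiarmid that handle ``typical'' Lipschitz constants do exist, but they have nontrivial hypotheses (e.g.\ a suitable monotone structure of $\cE$) and additional error terms that you would have to verify and estimate; none of this is in the proposal, and it cannot be waved away as ``the bad configurations contribute negligibly''.

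The paper avoids the problem entirely by truncating \emph{per vertex} rather than by conditioning on a global event. It works in discrete time (no detour through $H_\ts^{\cR}$ and Pittel's inequality is needed) and, alongside $F_i^{\cR}$, runs the ``influence'' rule $\cI$ on the same tuples, where every pair in each tuple is added, so $F_i^{\cR}\subseteq F_i^{\cI}$. The auxiliary variable is
\[
 X_{k,i} \;=\; \#\bigl\{\,v:\; |C_v(F_i^{\cR})|=k \text{ and } |C_v(F_i^{\cI})|<U\,\bigr\}.
\]
The key is that $X_{k,i}$ has a \emph{deterministic} Lipschitz bound $4\ell U$ with respect to changing any single tuple: swapping $\uv_j$ for $\underline{\tilde v}_j$ affects at most $2\ell$ components in each of $F_i^{\cI}$ and $\tilde F_i^{\cI}$, changes under $\cR$ can only propagate inside those influence components (this is the localization you correctly identified), and $X_{k,i}$ by construction only counts vertices in influence components of size $<U$. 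Thus plain McDiarmid applies to $X_{k,i}$ with no conditioning. The bridge back to $N_k(F_i^{\cR})$ is the separate, easy observation that on the event $\cL=\{L_1(F_{\sigma n}^{\cI})<U\}$, which holds with superpolynomially high probability by Lemma~\ref{lem:Tail:1} and~\eqref{eq:pittel}, we have $X_{k,i}=N_k(F_i^{\cR})$, and $\E X_{k,i}$ differs from $\E N_k(F_i^{\cR})$ by $o(1)$. So, relative to your plan: the localization idea is right, but what makes it usable is encoding the truncation into the random variable itself, so that the Lipschitz constant is unconditional; without that step, the argument as written does not go through.
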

\begin{proof}
We sequentially draw $\sigma n$ random tuples and consider two associated 
graph sequences $F_i^{\cR}$ and $F_i^{\cI}$, where the `influence' rule $\cI$ 
in each step simply joins all $\ell$ randomly chosen vertices by edges. Note 
that $F_i^{\cR} \subseteq F_i^{\cI}$ always holds. Let $\cL$ denote the event 
that $L_1(F_{\sigma n}^{\cI}) < U=(\log n)^{6/5}$, which by monotonicity 
implies $L_1(F_i^{\cI}) < U$ for all $0 \le i \le \sigma n$. Combining 
Lemma~\ref{lem:Tail:1} with \eqref{eq:pittel}, for $n \ge n_0(a,A,\sigma)$ we 
have, say, 
\begin{equation}
\label{eq:Nkc:L}
\Pr(\neg\cL) \le 3 \sqrt{\sigma n} \cdot \Pr(L_1(H_{\sigma}^{\cI}) \ge U) \le n^{-300} . 
\end{equation} 
For every $1 \le i \le \sigma n$ let $X_{k,i}$ denote the number of vertices 
which satisfy $|C_v(F_i^{\cR})|=k$ and $|C_v(F_i^{\cI})| < U$. When $\cL$ holds 
no vertices are `ignored' due to $|C_v(F_i^{\cI})| \ge U$, so we have 
$X_{k,i} = N_{k}(F_i^{\cR})$. Together with \eqref{eq:Nkc:L} this readily 
gives, say, $\E X_{k,i} = \E N_{k}(F_i^{\cR}) \pm n^{-1}$. So, for 
$\Delta = U^{3/2}n^{1/2}$ it follows that 
\begin{equation}
\label{eq:Nkc:I}
\Pr(\{|N_k(F_i^{\cR})-\E N_k(F_i^{\cR})| \ge 2 \Delta\} \cap \cL) \le \Pr(|X_{k,i}-\E X_{k,i}| \ge \Delta) .
\end{equation} 
Note that for every size rule $\cR$ the random variable $X_{k,i}$ can be 
written as $X_{k,i}=f(\uv_{1}, \ldots, \uv_{i})$, where the 
$\uv_{j}$ denote the $\ell$-tuples generated by the $\ell$-vertex 
process in each step (uniformly and independently). We claim that the function 
$f$ satisfies $|f(\omega)-f(\tilde{\omega})| \le 4\ell U$ whenever $\omega$ 
and $\tilde{\omega}$ differ in one coordinate, i.e., in one tuple. 
Assuming that $(\uv_{1}, \ldots, \uv_{i})$ yield 
$F_i^{\cR}$ and $F_i^{\cI}$, respectively, let $\tilde{F}_i^{\cR}$ and 
$\tilde{F}_i^{\cI}$ denote the graphs which result by changing 
$\uv_{j}$ to $\underline{\tilde{v}}_{j}$. 
Since $F_i^{\cI}$ and $\tilde{F}_i^{\cI}$ only differ in the edges induced by 
$\uv_{j}$ and $\underline{\tilde{v}}_{j}$, there is a set of vertices 
$W$ containing at most $2\ell$ components in each of $F_i^{\cI}$ and 
$\tilde{F}_i^{\cI}$ so that outside of $W$ the component structure of both 
graphs is the same (to see this note that the order is irrelevant for $\cI$, 
so we may assume $i=j$; then defining $W$ as the union of the components 
containing the vertices of $\uv_{j}$ and $\underline{\tilde{v}}_{j}$ 
in $F_{i-1}^{\cI}=\tilde{F}_{i-1}^{\cI}$ suffices). 
The key point is now that for size rules the decisions of $\cR$ in $F_i^{\cR}$ 
and $\tilde{F}_i^{\cR}$ are the same for all tuples which contain no vertices 
from $W$ (indeed, if a decision of $\cR$ is modified then any changes of the 
resulting component sizes can only `propagate' inside the components of 
$F_i^{\cI}$ and $\tilde{F}_i^{\cI}$; so only tuples containing vertices from 
$W$ can be affected). 
It follows that the component structure outside of $W$ is also the same in 
$F_i^{\cR}$ and $\tilde{F}_i^{\cR}$. Recall that $W$ contains at most $2\ell$ 
components in each of $F_i^{\cI}$ and $\tilde{F}_i^{\cI}$. 
So, since $X_{k,i}$ only counts those vertices $v$ with $|C_v(F_i^{\cI})| < U$, 
we see that a change of one tuple can alter $f$ by at most $2 \cdot 2\ell \cdot U$, 
as claimed. So, recalling that $1 \le i \le \sigma n$, for 
$n \ge n_0(\ell,\sigma)$ McDiarmid's inequality~\cite{McDiadmid1989} implies 
\begin{equation}
\label{eq:Nkc:AH}
\Pr(|X_{k,i}-\E X_{k,i}| \ge \Delta) \le \exp\left(-\frac{2\Delta^2}{i (4\ell U)^2}\right) \le n^{-300} \enspace . 
\end{equation} 
Finally, after combining \eqref{eq:Nkc:L}--\eqref{eq:Nkc:AH}, taking a union 
bound to account for all choices of $1 \le i \le \sigma n$ and 
$1 \le k \le (\log n)^2$ completes the proof (noting that the claim is trivial 
for $i=0$). 
\end{proof}
Using the main idea of the above proof we can directly show that 
$\E N_k(F_i^{\cR})$ is essentially independent of the initial graph 
$F_0^{\cR}=F$ for $i \le \sigma n$: for any two graphs $F,\tilde{F}$ satisfying 
the assumptions of Theorem~\ref{thm:evolution} their expected values can 
differ by at most, say, $(\log n)^{C+3} n^{1/2}$. 
The key point is that for such graphs we can construct a bijection $\Psi$ 
between their vertex sets which, up to an exceptional set $W$ of at most, say, 
$4U (\log n)^C n^{1/2}$ vertices, preserves the component structure of 
$F$ and $\tilde{F}$, respectively. Now, using $\Psi$ we couple 
$F_i^{\cR},F_i^{\cI}$ and $\tilde{F}_i^{\cR},\tilde{F}_i^{\cI}$ in a measure 
preserving way. Since changes can only propagate inside the components of the 
`influence' graphs, only those vertices whose components in $F_i^{\cI}$ or 
$\tilde{F}_i^{\cI}$ contain vertices of $W$ or $\Psi(W)$ can be `spoiled'. 
Intuitively, since the components usually have size at most $U$, under this 
coupling $N_k$ thus typically differs by at most $2|W| \cdot U$ for both 
graphs. Taking the error probability of 
$\max\{L_1(F_{\sigma n}^{\cI}),L_1(\tilde{F}_{\sigma n}^{\cI})\} < U$ into 
account, the claim now follows without much work.

\subsubsection{Putting things together} 
In this section we combine our findings to prove Theorem~\ref{thm:evolution}. 
Lemma~\ref{lem:varrho:prop} easily implies the first part, i.e., existence of 
$\varrho:\NN\times \Rp \to [0,1]$ with the desired properties. Indeed, it 
ensures that for every $\sigma \ge 0$ satisfying \eqref{eq:Nk:sigma} there 
exist $a,A > 0$ (depending only on $\ell,L,\sigma,\beta,B$) such that for 
every $t \in [0,\sigma]$ we have $\sum_{k \ge 1} \varrho(k,t)=1$ and 
\begin{equation}
\label{eq:rho}
\varrho(s,t) \le Ae^{-as} \quad \text{for all $s \ge 0$.}
\end{equation}
For later usage we remark that Lemma~\ref{lem:Tail:1} holds for the 
same $a,A$. Let $D=300/a>0$ and 
$\tilde{\beta}=\min\{e^{a/2},e^{1/(4D)}\}>1$. Now, using \eqref{eq:rho} we see 
that 
\begin{equation}\label{eq:varrho:Tailsum}
\sum_{k \ge 1} \varrho(k,t) \tilde{\beta}^k \le A \sum_{k \ge 1} e^{-ak/2} = \tilde{B}-1 ,
\end{equation} 
with $1 < \tilde{B} < \infty$ depending only on $a,A$. Similarly, we obtain 
\begin{equation}\label{eq:varrho:Chi}
\chi(\varrho,t)=\sum_{k \ge 1} k \varrho(k,t) \le A \sum_{k \ge 1} k e^{-ak} = \tilde{L}-1 ,
\end{equation} 
with $1 < \tilde{L} < \infty$ depending only on $a,A$. 
Summarizing, equations \eqref{eq:varphi:technical}--\eqref{eq:varphi:chi} hold 
when $\beta,B,L,\varphi(\cdot)$ are replaced by 
$\tilde{\beta},\tilde{B},\tilde{L},\varrho(\cdot,t)$, with room to spare.

Turning to properties of $F_{i}^{\cR}$, from Lemmas~\ref{lem:Nk:E}--\ref{lem:Nk:C} 
it follows that with probability at least $1-n^{-250}$, for every 
$0 \le i \le \sigma n$ (by considering $\ts=i/n \in [0,\sigma]$) we have 
\begin{equation}\label{eq:Nk:C}
N_k(F_{i}^{\cR}) = \varrho(k,i/n) n \pm 3(\log n)^{C+6} n^{1/2} \quad \text{for all $1 \le k \le (\log n)^2$}
\end{equation}
for $n \ge n_0(\ell,L,\sigma,\beta,B)$. 
Recall that Lemma~\ref{lem:Tail:1} holds with the $a,A$ chosen above. By 
definition of $D$ it follows that, with probability at least $1-n^{-250}$, we 
have 
\begin{equation}
\label{eq:L1}
L_1(F_{\sigma n}^{\cR}) \le D \log n
\end{equation}
for $n \ge n_0(A)$. In the remainder we assume that \eqref{eq:Nk:C}--\eqref{eq:L1} 
hold. Recalling \eqref{eq:rho} and the definition of $D$, note 
that for all $k \ge D\log n$ and $0 \le i \le \sigma n$ we have, say, 
$|\varrho(k,i/n)| \le n^{-9}$ for $n \ge n_0(A)$. 
Using \eqref{eq:L1} it follows that 
\begin{equation*}
\label{eq:Nk:C:0}
N_k(F_{i}^{\cR}) = \varrho(k,i/n) n \pm (\log n)^{C} n^{1/2} \quad \text{for all $k \ge D\log n$ and $0 \le i \le \sigma n$. }
\end{equation*}
Together with \eqref{eq:Nk:C}, for every $0 \le i \le \sigma n$ this 
establishes \eqref{eq:Nk:approx} with $C,F,\varphi(\cdot)$ replaced by 
$\tilde{C},F_{i}^{\cR},\varrho(\cdot,i/n)$ for $n \ge n_0(D)$, where 
$\tilde{C}=C+9$. 
Now, using \eqref{eq:varrho:Tailsum} and \eqref{eq:Nk:C}--\eqref{eq:L1} we 
see that for every $0 \le i \le \sigma n$ we have (as $i/n \in [0,\sigma]$) 
\begin{equation*}\begin{split}
\sum_{k \in [n]} N_{k}(F_{i}^{\cR}) \tilde{\beta}^k &\le n \sum_{1 \le k \le D \log n} \varrho(k,i/n) \tilde{\beta}^s + 3D (\log n)^{C+6} n^{1/2} \sum_{1 \le k \le D \log n} \tilde{\beta}^k \\
&\le n \sum_{k \ge 1} \varrho(k,i/n) \tilde{\beta}^k + 3D^2 (\log n)^{C+7} n^{3/4} \le \tilde{B}n 
\end{split}\end{equation*}
for $n \ge n_0(C,D)$, which establishes \eqref{eq:NK:tailsum} with 
$\beta,B,F$ replaced by $\tilde{\beta},\tilde{B},F_{i}^{\cR}$. 
It remains to show that \eqref{eq:NK:chi}--\eqref{eq:S2} hold. Recall that 
$S(G)=\sum_{k \in [n]} k N_k(G)/n$. Now, assuming 
$n \ge n_0(a,A,D)$ and using \eqref{eq:varrho:Chi}--\eqref{eq:L1} 
similarly as above, for every $0 \le i \le \sigma n$ we have 
\begin{equation*}\begin{split}
S(F_{i}^{\cR})&= \sum_{k \ge 1} k \varrho(k,i/n) \pm A\sum_{k \ge D \log n} k e^{-ak} \pm 3D^2 (\log n)^{C+8} n^{-1/2} \\
&= \chi(\varrho,i/n) \pm 4D^2 (\log n)^{C+8} n^{-1/2} , 
\end{split}\end{equation*}
which establishes \eqref{eq:S2} for $n \ge n_0(D)$. 
Finally, recalling \eqref{eq:varrho:Chi}, it follows that \eqref{eq:NK:chi} 
holds with $L,F$ replaced by $\tilde{L},F_{i}^{\cR}$ for $n \ge n_0(C,D)$, 
which completes the proof of Theorem~\ref{thm:evolution}.

\section{When does $\tcx=\tc$?}\label{sec:disc}
In this section we discuss Conjecture~\ref{conj}, first
showing that it does hold for many size rules and then, in Section~\ref{sec:delay},
showing that it cannot be extended to general 
$\ell$-vertex rules, i.e., that the critical point where the susceptibility 
blows up need not always coincide with the percolation threshold.

\subsection{Rules with uniform random edges}\label{sec:unif}
It is well known, and not hard to check, that under suitable
assumptions the graph $F_{\theta n}$ given by adding
$\theta n$ independent and uniformly random edges to a given
$n$-vertex initial graph $F$ can be viewed as an instance
of the inhomogeneous random graph model of Bollob\'as, Janson and Riordan~\cite{BJR}.
To make this precise, consider instead the (multi-)graph $\tF_{\theta}$
obtained from $F$ by adding a Poisson number $\Po(2\theta/n)$ of copies of each
of the $\binom{n}{2}$ possible edges, with these numbers independent; we may then
ignore multiple edges, as we are only interested in the component structure.
Since $\Po(\theta (n-1))$ edges are added in total, and there will be few multiple
edges, $\tF_{\theta}$ and $F_{\theta n}$ are essentially interchangeable (one
may use domination arguments comparing them for different $\theta$ to make this precise).
Given two components $C_1$ and $C_2$ of $F$, the number of edges between
them in $\tF_{\theta}$ is Poisson with mean $|C_1||C_2|2\theta/n$.
Making (for a change) the $n$ dependence explicit, let $H_n$ be the
random graph whose vertices are the components of $F$, with an edge
between two vertices if these components are joined by an edge of $\tF_{\theta}$.
We say that a vertex of $H_n$ has \emph{type $k$} if the corresponding
component of $F$ has $k$ vertices. Then
the probability of an edge between a given type-$i$ vertex and a given type-$j$ vertex 
of $H_n$ is $1-e^{-2\theta ij/n}$, which is around $2\theta ij/n$ if $i$ and $j$ are
not too big, and the events that different edges are present are independent.

More precisely, let $\kappa(i,j)=2\theta ij$ for all positive integers $i$ and $j$.
Suppose that $\mu$ is a finite measure on $\ZZ^+$, i.e., that $\mu_k=\mu(\{k\})\ge 0$
for all $k$ and $0<\sum_{k \ge 1} \mu_k<\infty$.
Let $F=F_n$ be a random $n$-vertex starting
graph. Suppose that, for each fixed $k\ge 1$,
\begin{equation}\label{BJR1}
 \frac{N_k(F_n)}{kn} \pto \mu_k
\end{equation}
as $n\to\infty$, i.e., that $H_n$ has asymptotically $\mu_k n$ vertices of type $k$,
and that
\begin{equation}\label{BJR2}
 \sum_{k \ge 1} k\mu_k =1.
\end{equation}
Then one can use \eqref{BJR2}, the fact that $F_n$ has $n$ vertices
and \eqref{BJR1} to show that whenever $K(n)\to\infty$ we have
\begin{equation}\label{notail}
 N_{\ge K(n)}(F_n)/n\pto 0,
\end{equation}
and it follows that for any $A\subset \ZZ^+$ we have
\begin{equation}\label{vspace}
 \sum_{k\in A} \frac{N_k(F_n)}{kn}\pto \sum_{k\in A}\mu_k.
\end{equation}
In the terminology of \cite{BJR},
this means that the (random) sets of vertices of the graphs $H_n$, together
with their types, form a \emph{generalized vertex space} on the
\emph{generalized ground space} $(\ZZ^+,\mu)$. Taking $A=\ZZ^+$ in \eqref{vspace}, 
we have in particular that $|H_n|/n\pto \mu(\ZZ^+)\in (0,\infty)$.
By \eqref{BJR2}
the function $\kappa$ forms an integrable kernel on the ground space $(\ZZ^+,\mu)$,
with integral $2\theta$. Finally, the technical `graphicality' condition of \cite{BJR}
is met since $\tF_{\theta}$ has asymptotically $\theta n$ edges.
It follows that under these assumptions, the results of~\cite{BJR}
apply to $H_n$ (see Remark 2.4 there). The most important
of these results is \cite[Theorem 3.1]{BJR}, which tells
us that $H_n$ will whp contain a giant component (one with $\Theta(n)$ vertices)
if and only if $\norm{T_\kappa}>1$, where $T_\kappa$ is a certain integral
operator associated to $\kappa$. In particular,
if $\norm{T_\kappa}>1$ then there is some constant $\alpha=\alpha(\kappa,\mu)>0$
(anything smaller than the quantity $\rho(\kappa)$ in~\cite{BJR}) 
such that whp $H_n$ has a component with at least $\alpha n$ vertices.
For the particular $\kappa$
considered here, which is `rank 1', we have $\norm{T_\kappa}=\sum_k 2\theta k^2\mu_k$;
see (16.8) in~\cite{BJR}. Note that if $H_n$ contains a component with at least $\alpha n$ vertices,
then so does $\tF_{\theta}$ -- the union of the components of $F$ corresponding to these
vertices of $H_n$.
So, in short, if \eqref{BJR1} and \eqref{BJR2} hold, then $\tF_{\theta}$
will have a giant component (whp) if (and, one can check, only if) $\sum_k 2\theta k^2\mu_k>1$.
Moreover, it is not hard to check that these conclusions remain true if we delete 
some subset of the components of $F_n$, and adjust $\mu$, as long as \eqref{BJR1}
holds for the new graph and $\Theta(n)$ components remain; this is because
\eqref{vspace} still holds, and the kernel is still graphical.

We shall apply the observations above with initial graph $F=F_n=G_{\tcx n}^{\cR}$,
where $\cR$ is some $\ell$-vertex size rule. By \eqref{Nktcx},
the condition \eqref{BJR1} holds with $\mu_k=\rho_k(\tcx)/k$.
Furthermore, as noted after \eqref{Nktcx}, we have $\sum_k \rho_k(\tcx)=1$, which gives
\eqref{BJR2}. Finally, note that
\begin{equation}\label{infnorm}
 \norm{T_\kappa} = \sum_k 2\theta k\rho_k(\tcx) =2\theta s(\tcx) = \infty,
\end{equation}
since $s(t)=\sum_k \rho_k(t)$ diverges at $t=\tcx$. So far this tells us
only that if we run any size rule up to time $t=\tcx^{\cR}$ and then
switch to adding uniformly random edges, after any constant times $n$
further edges a giant component will emerge. The key point is that variants
of this argument can be used to study the further evolution of $G_i^{\cR}$
for suitable rules $\cR$. A related approach was taken in~\cite{SpencerWormald2007} and~\cite{JansonSpencer2010}.
\begin{theorem}\label{conj_bs}
Let $\cR$ be a bounded-size $\ell$-vertex rule. Then the conclusion of Conjecture~\ref{conj}
holds for $\cR$; in particular, $\tc^{\cR}=\tcx^{\cR}$, and moreover for any $\eps>0$
there is an $\alpha>0$ such that whp $L_1(G^{\cR}_{(\tcx+\eps)n})\ge \alpha n$.
\end{theorem}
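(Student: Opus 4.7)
The direction $\tcx \le \tc$ is immediate from $L_1(G)^2 \le n S(G)$, so the task is to produce a linear-size component shortly after $\tcx$. By Theorem~\ref{thm:main} and \eqref{Nktcx}, at time $\tcx$ we have $N_k(G_{\tcx n}^{\cR})/n \pto \rho_k(\tcx)$ for every fixed $k$, with $\sum_k \rho_k(\tcx) = 1$ and $s(\tcx) = \sum_k k\rho_k(\tcx) = \infty$. Writing $B$ for the size bound of $\cR$, the estimate $\sum_{k\le B} k\rho_k(\tcx) \le B$ forces $\sum_{k>B} k\rho_k(\tcx) = \infty$, and in particular $\pi := \sum_{k>B}\rho_k(\tcx) > 0$.

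The key structural fact is that when all $\ell$ offered vertices $(v_1,\ldots,v_\ell)$ lie in components of size $>B$, the set $P\subseteq\binom{[\ell]}{2}$ of pairs that $\cR$ selects depends only on the size-profile being `all large', hence is a fixed nonempty subset of $\binom{[\ell]}{2}$. Fix a representative $\{j_0,k_0\}\in P$; on such a step the edge $v_{j_0}v_{k_0}$ is added, and conditional on the event and the current graph it is uniform among pairs in currently-large components. Since bounded-size rules only merge components, the set $L^*$ of vertices of $F := G_{\tcx n}^{\cR}$ already lying in components of size $>B$ is contained in the currently-large set at every subsequent step, so on the more restrictive event $A_i = \{\text{all $\ell$ vertices of step $i$ lie in $L^*$}\}$ the same deterministic $P$ is selected. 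Conditional on $F$, the events $A_i$ are independent with $\Pr(A_i) = (|L^*|/n)^\ell \ge \pi^\ell - o(1)$, so by Chernoff $A_i$ occurs for at least $q_0 n := \tfrac{1}{2}\pi^\ell \eps n$ indices $i \in (\tcx n,(\tcx+\eps)n]$ whp; on each such step a uniform random edge in $L^*$ is added, independently across the $A_i$-steps (the tuples are independent and the added pair is a deterministic function of the tuple).

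It therefore suffices to prove that the random graph $F^+$ obtained from $F$ by adjoining $q_0 n$ iid uniform random edges with both endpoints in $L^*$ satisfies $L_1(F^+) \ge \alpha n$ whp. This is an instance of the Bollob\'as--Janson--Riordan setup described before the theorem, applied with ground space restricted to the `large' types $\{k>B\}$ and measure $\mu_k = \rho_k(\tcx)/k$; the analogues of \eqref{BJR1} and \eqref{BJR2} (now with total mass $\pi$ instead of $1$) follow from \eqref{Nktcx}, and the exponential tail on $\rho_k$ from Theorem~\ref{thm:main} supplies the graphicality hypothesis via \eqref{notail}. The rank-1 kernel $\kappa(i,j) = c\cdot ij$ with an appropriate $c=c(q_0,\pi)>0$ then satisfies $\|T_\kappa\| = c\sum_{k>B} k \rho_k(\tcx) = \infty$ as in \eqref{infnorm}, so \cite[Theorem~3.1]{BJR} produces a component of $\Theta(n)$ vertices in $F^+$, and hence in $G_{(\tcx+\eps)n}^{\cR}$, whp. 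The main obstacle is justifying the final coupling cleanly: one must verify that conditional on $F$ the $A_i$-edges really do form an iid uniform sample from pairs in $L^*$ -- which we have arranged by keying the argument off the fixed set $L^*$ rather than the time-varying large-component set -- and check graphicality of the restricted kernel on a random starting graph $F$, for which the exponential tail of $\rho_k$ from Theorem~\ref{thm:main} is exactly what is needed.
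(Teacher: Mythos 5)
Your proposal is essentially the same argument as the paper's: restrict attention to the set $W=L^*$ of vertices already in components of size $>B$ at time $\tcx n$, observe that (because a bounded-size rule only merges components) vertices of $L^*$ stay in large components forever so that on steps where all $\ell$ chosen vertices land in $L^*$ the rule's decision is a fixed non-empty set of pairs, and hence a uniformly random edge inside $L^*$ is added; whp $\Theta(n)$ such steps occur, and the resulting uniform-edge graph on the `large' components is handled by the Bollob\'as--Janson--Riordan machinery with divergent rank-one kernel norm, exactly as in \eqref{infnorm}. Keying off the static set $L^*$ rather than the evolving large-component set is precisely the paper's device as well, and your Chernoff/coupling argument parallels the paper's step-counting.

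One small but genuine slip: you appeal twice to ``the exponential tail on $\rho_k$ from Theorem~\ref{thm:main}'' to get graphicality/\eqref{notail}. That tail bound $\rho_k(t')\le Ae^{-ak}$ in Theorem~\ref{thm:main} is stated only for $t'\le t<\tcx$, and it \emph{cannot} hold at $t=\tcx$ since that would force $s(\tcx)=\sum_k k\rho_k(\tcx)<\infty$, contradicting the very divergence you are exploiting. The paper establishes \eqref{notail} and graphicality without any tail hypothesis: \eqref{notail} follows from \eqref{BJR1}, \eqref{BJR2} and the trivial fact that the starting graph has $n$ vertices, and graphicality of the kernel follows because the number of added edges is concentrated (and, for the restricted ground space, because \eqref{vspace} still holds after deleting small components). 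Replace the exponential-tail appeal by this argument and your proof is complete.
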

Note that this result was proved for some bounded-size $4$-vertex  
rules (ones in which either $v_1v_2$ or $v_3v_4$ is added) already by 
Spencer and Wormald~\cite{SpencerWormald2007}.
\begin{proof}
By definition of bounded-size rules, there is a constant $B$ such that $\cR$ treats all components
of size greater than $B$ in the same way.
Consider the graph $G^{\cR}_{\tcx n}$ generated by the rule after $\tcx n$ steps. Let
$W$ be the set of vertices of this graph in components of size greater than $B$, and let
$F=F_n$ be the subgraph of $G^{\cR}_{\tcx n}$ induced by $W$. 
Noting that $s(\tcx)=\sum_k k\rho_k(\tcx)=\infty>B$, we have $\rho_k(\tcx)>0$ for some $k>B$,
and it follows that for some constant $\beta>0$, we have $|W|\ge\beta n$ whp. From
now on we assume that this is the case. In all subsequent steps of our original
process $G_i^{\cR}$, every vertex of $W$ is in a component of size greater than $B$.
Fix $\eps>0$. Let us call a step \emph{good} if in this step all $\ell$ selected vertices are in $W$.
Then each step is good with probability at least $\beta^\ell$, and it follows
that whp at least $\theta n$ of the next $\eps n$ steps are good, where $\theta=\eps \beta^\ell/2$ is a
positive constant. 
Again using the definition of a bounded-size rule, in each good step at least one edge is added and 
by symmetry it is chosen uniformly at random from all possible edges with ends in $W$. 
It follows that we may couple $G^{\cR}_{(\tcx+\eps)n}$ and $\tF_{\theta}$ so that whp
the former contains the latter as a subgraph. But $F$ satisfies the assumptions
above with $\mu_k=\rho_k(\tcx)/k$ for $k> B$ and $\mu_k=0$ for $k\le B$.
Since the sum in \eqref{infnorm} remains infinite after removing the first $B$ terms,
Theorem~3.1 of~\cite{BJR} and the discussion above imply that for some positive
$\alpha$, whp $\tF_{\theta}$ contains a component with at least $\alpha n$ vertices.
\end{proof}

Our next result concerns a different generalization of the Bohman--Frieze process~\cite{BF2001}. 
Let us call an Achlioptas rule $\cR$ \emph{take-it-or-leave-it} if, when presented
with a choice of two edges $e_1$ and $e_2$, the rule decides which to select
depending only on the current graph and on $e_1$. In other words, the rule
first sees $e_1$ and must decide whether to take this edge of not; if not, 
it selects the uniformly random edge $e_2$. Bounded-size rules of this 
type were studied, for example, by Bohman and Kravitz~\cite{BohmanKravitz2006}; here 
we do not assume that the rule is bounded-size. 
\begin{theorem}
Let $\cR$ be a take-it-or-leave-it size rule. Then the conclusions of Conjecture~\ref{conj}
and Theorem~\ref{conj_bs} hold for $\cR$. 
\end{theorem}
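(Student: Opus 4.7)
The plan is to follow the strategy of Theorem~\ref{conj_bs} and apply the inhomogeneous random graph framework of~\cite{BJR}. By Theorem~\ref{thm:main} together with~\eqref{Nktcx} and the discussion after it, $\sum_k \rho_k(\tcx) = 1$ and $s(\tcx) = \sum_k k \rho_k(\tcx) = \infty$. Setting $\mu_k = \rho_k(\tcx)/k$, the conditions~\eqref{BJR1}--\eqref{BJR2} hold and $\norm{T_\kappa} = 2\theta s(\tcx) = \infty > 1$ for every $\theta > 0$. Hence by~\cite[Theorem~3.1]{BJR} and the discussion around~\eqref{infnorm}, adding $\Omega(n)$ independent uniformly random edges to $G^{\cR}_{\tcx n}$ produces a linear-sized component whp. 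So the task reduces to showing that, under a suitable coupling, $G^{\cR}_{(\tcx+\eps)n}$ contains such a random augmentation of $G^{\cR}_{\tcx n}$.

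The key feature of take-it-or-leave-it rules to exploit is that the alternative edge $e_2$ is drawn uniformly at random, independently of $e_1$, the current graph, and the rule's decision. Consequently, whenever $\cR$ rejects $e_1$ (a ``leave'' step), the added edge is a fresh uniform random edge, independent of the entire prior history. Let $p_i$ denote the leave probability at step $i$; for a size rule this depends only on the component-size distribution of $G^{\cR}_{i-1}$, with $p_i \approx \sum_{(c_1, c_2) \in R} N_{c_1}(G^{\cR}_{i-1}) N_{c_2}(G^{\cR}_{i-1})/n^2$, where $R \subseteq \NN \times \NN$ is the rejection set.

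Now split into cases. Suppose first that some $(c_1^*, c_2^*) \in R$ satisfies $\rho_{c_1^*}(\tcx) \rho_{c_2^*}(\tcx) > 0$. Since $N_k$ changes by at most $\ell k$ per step (as noted after Theorem~\ref{thm:main}), for each fixed $k$ the quantity $N_k/n$ is Lipschitz in $i/n$; combined with~\eqref{Nktcx}, this yields $p_i \ge \rho_{c_1^*}(\tcx) \rho_{c_2^*}(\tcx)/4$ throughout a window $(\tcx n, (\tcx + \eps_0) n]$ for some $\eps_0 = \eps_0(\cR) > 0$. An Azuma--Hoeffding argument then gives $\Omega(n)$ leave steps whp, providing the $\Omega(n)$ independent uniform random edges needed to invoke the BJR framework exactly as in the proof of Theorem~\ref{conj_bs}. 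The resulting giant persists for all later times, handling $\eps \ge \eps_0$; for $0 < \eps < \eps_0$ the same argument applies verbatim to the shorter window $(\tcx n, (\tcx + \eps) n]$.

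The hard part will be the complementary case, where $R$ is either empty or disjoint from $\mathrm{supp}(\rho(\tcx)) \times \mathrm{supp}(\rho(\tcx))$. There the rule effectively never rejects near $\tcx$: the added-edge distribution has total variation $O(p_i)$ from uniform with $p_i \to 0$, so the rule's process closely tracks the Erd\H os--R\'enyi evolution from $G^{\cR}_{\tcx n}$. A naive coupling fails because the two processes diverge as soon as they disagree. The proposed resolution is to observe that as the process runs past $\tcx$, mergers create components of new sizes that eventually enter $R$ with positive density, reducing to the previous case after a short delay; quantifying this requires tracking how quickly new sizes first reach positive density. An alternative would be to couple the rule's process with pure ER up to $o(n)$ disagreements and invoke a stability result showing that BJR's giant is robust under sublinear edge perturbations.
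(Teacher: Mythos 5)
Your first case, where the rejection set $R$ meets $\mathrm{supp}\,\rho(\tcx)\times\mathrm{supp}\,\rho(\tcx)$, is handled correctly: the leave steps supply $\Omega(n)$ independent uniform edges from $\cL_2$, and the BJR framework then applies exactly as in Theorem~\ref{conj_bs}. But you explicitly leave the complementary case unresolved, and the two proposed resolutions do not close the gap. The first (waiting for new sizes to reach positive density in $R$) is not quantified and could fail for rules where $R$ simply never meets the relevant supports. The second is closer in spirit to what is needed but is framed incorrectly: you speak of coupling with ``pure ER up to $o(n)$ disagreements'', whereas what is actually required is a perturbation bound for a \emph{linear} number of deleted edges, and the relevant object is not the Erd\H{o}s--R\'enyi evolution but the graph built from the $\cL_1$ edges.

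The idea you are missing is that no case split is needed at all, because the $e_1$ edges are \emph{also} a list of independent uniform edges, and the rule must at every step either accept the current $\cL_1$ edge or add the next $\cL_2$ edge. Fix $\eps>0$ and build $H_n$ on the components of $F=G^{\cR}_{\tcx n}$ using the first $\eps n$ edges of $\cL_1$. By \cite[Theorem~3.1]{BJR} together with \eqref{infnorm}, $H_n$ whp has a component with $\ge\alpha n$ vertices, and by the stability theorem \cite[Theorem~3.9]{BJR} there is $\delta>0$ so that whp deleting any $\delta n$ edges of $H_n$ still leaves a component of size $\ge\alpha n/2$. Independently, whp adding the first $\delta n$ edges of $\cL_2$ to $F$ creates a linear component. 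On the intersection of these two high-probability events the dichotomy is deterministic: after $\eps n$ steps, either the rule rejected at least $\delta n$ of the $\cL_1$ edges (so at least $\delta n$ edges of $\cL_2$ were added, giving a giant), or it accepted at least $(\eps-\delta)n$ of the first $\eps n$ edges of $\cL_1$ (equivalently, deleted fewer than $\delta n$ of them, so the stability property gives a giant). This argument is agnostic to $R$, to the rejection probability $p_i$, and to its evolution, which is exactly why it succeeds where your second case stalls.
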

\begin{proof}
Consider the process $(G^{\cR}_{\tcx n+i})_{i\ge 0}$, i.e., our Achlioptas process
started at step $\tcx n$. As above, set $F=F_n=G^{\cR}_{\tcx n}$.
Since $\cR$ is a take-it-or-leave-it rule, the further evolution may be described
as follows. Let $\cL_1$ and $\cL_2$ be independent lists of independent (potential) edges
each chosen uniformly at random from all $\binom{n}{2}$ possibilities.
In step $i$ of our process (step $\tcx n+i$ of the original), take for $e_1$ the
$i$th element of $\cL_1$. The rule now decides whether to add this edge to the current
graph. If not, take for $e_2$ the \emph{next} edge from $\cL_2$, and add that.
Thus, the $j$th time that the rule declines the first edge, we take the $j$th edge from $\cL_2$.

Since the edges in $\cL_2$ are uniformly random, the discussion before Theorem~\ref{conj_bs}
shows that for any constant $\delta>0$, whp the first $\delta n$ edges from $\cL_2$
will, when added to $F=G^{\cR}_{\tcx n}$, be enough to form a giant component.
Fix $\eps>0$, and define as above a graph $H_n$ whose vertices are the components
of $F$, with edges corresponding to the first $\eps n$ edges from $\cL_1$.
As noted before, this graph $H_n$ 
may be viewed as an instance of the model studied in~\cite{BJR}, and there is some
$\alpha>0$ such that whp
$H_n$ has a component with at least $\alpha n$ vertices. Furthermore,
by the stability result~\cite[Theorem 3.9]{BJR}, there is some $\delta>0$
such that whp $H_n$ has the property that deleting any $\delta n$ edges
still leaves a component with at least $\alpha n/2$ vertices
of $H_n$. Hence, whp $\cL_1$ has the property that if we add any subset
of at least $(\eps-\delta)n$ of the first $\eps n$ edges to $F$,
we will create a component of size at least $\alpha n/2$, and 
whp $\cL_2$ has the property that adding its first $\delta n$
edges to $F$ creates a component of size at least some constant
times $n$. But when both properties hold, then whatever the rule does,
$G^{\cR}_{(\tcx+\eps)n}$ will have a giant component.
\end{proof}

For our final result, let us call an Achlioptas rule \emph{large-biased} if 
there exists some constant $B$ such that if both endvertices of $e_1$
are isolated vertices (components of size one) and both endvertices of $e_2$ are in components 
of size greater than $B$, then the rule will select $e_2$. 
Perhaps the most interesting examples of such 
rules are the \emph{reverse product rule}, where we select the (a if there is a tie) edge 
maximizing the product of the sizes of the components containing its endvertices,
or the \emph{reverse sum rule}, defined similarly but with product replaced by sum.
Perhaps surprisingly (given the difficulty of analyzing the usual product rule),
we can prove Conjecture~\ref{conj} for such rules.
\begin{theorem}
Let $\cR$ be a large-biased size rule.
Then the conclusions of Conjecture~\ref{conj}
and Theorem~\ref{conj_bs} hold for $\cR$.
\end{theorem}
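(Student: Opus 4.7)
The approach parallels the two previous proofs: run the rule up to time $\tcx$, identify a source of (approximately) uniform random edges within a large vertex subset that the rule is \emph{forced} to add, and apply the Bollob\'as--Janson--Riordan kernel machinery. Here the large-biased property plays the role of the forcing mechanism: in any step where $e_1$ has both endpoints isolated and $e_2$ has both endpoints in components of size greater than $B$, the rule must select $e_2$; and since $e_2$ is an independent uniform random edge, conditional on both its endpoints lying in a fixed vertex subset $U$ it is uniform among edges within $U$.

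Set $F=F_n=G^{\cR}_{\tcx n}$ and let $W=W_{\tcx}$ be the vertices of $F$ in components of size $>B$. First I would establish that for constants $\gamma,\beta>0$ depending only on $\cR,\ell$, whp $N_1(F)\ge\gamma n$ and $|W|\ge\beta n$. The bound on $|W|$ is immediate from $s(\tcx)=\infty$ combined with \eqref{Nktcx}, which forces $\sum_{k>B}\rho_k(\tcx)>0$. For the bound on $N_1$ the key observation is that any Achlioptas rule can destroy an isolated vertex only via $e_1$ or $e_2$ having an isolated endpoint, whence $\E[N_1(G^{\cR}_i)\mid G^{\cR}_{i-1}]\ge(1-4/n)N_1(G^{\cR}_{i-1})$; iterating gives $\E[N_1(G^{\cR}_{\tcx n})]/n\ge(1-4/n)^{\tcx n}\to e^{-4\tcx}\ge e^{-4}$, and combined with the concentration of $N_1$ from Theorem~\ref{thm:main} this yields $\rho_1(\tcx)\ge e^{-4}>0$. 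Choose also $K=K(\cR,\ell)$ so that $\sum_{B<k\le K}k\rho_k(\tcx)$ is as large as we wish (possible since $s(\tcx)=\infty$), and let $W'\subseteq W$ be the vertices of $F$ in components of size in $(B,K]$; then $|W'|\ge\beta' n$ whp for some $\beta'>0$, and $F[W']$ has $\Theta(n)$ components.

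Fix $\eps>0$ small. Among the $\eps n$ steps after $\tcx n$, call step $j$ a \emph{candidate} if $e_2$ has both endpoints in $W'$, and \emph{useful} if moreover $e_1$ has both endpoints isolated in the current graph. Since $N_1$ decreases by at most $2$ per step, $N_1\ge\gamma n/2$ throughout $[\tcx n,(\tcx+\eps)n]$ provided $\eps<\gamma/4$. The candidate indicators are i.i.d.\ Bernoulli with parameter $(\beta')^2+o(1)$ (as $e_2$ is independent of the past), and conditional on being a candidate the step is useful with probability at least $(\gamma/2)^2$; moreover, conditional on being a candidate $e_2$ is uniformly distributed among edges within $W'$ independently of everything else. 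Hence whp the number of candidates is at least $(\beta')^2\eps n/2$ while the number of non-useful candidates is at most $\bigl(1-\gamma^2/4\bigr)(\beta')^2\eps n$.

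Let $\hat F=F[W']\cup\{e_2:\text{candidate step}\}$. By the discussion preceding Theorem~\ref{conj_bs} applied to $F[W']$ (which satisfies appropriately rescaled versions of \eqref{BJR1} and \eqref{BJR2} with $\mu_k=\rho_k(\tcx)/k$ for $B<k\le K$, and has $\Theta(n)$ components), adding $\Theta(n)$ i.i.d.\ uniform random edges within $W'$ produces a giant of size $\ge\alpha n$ whp, because the rank-one kernel norm $2\theta\sum_{B<k\le K}k\rho_k(\tcx)$ can be made $>1$ via the choice of $K$. The stability result \cite[Theorem~3.9]{BJR} then produces $\xi>0$ such that whp deleting any $\xi n$ edges from $\hat F$ still leaves a component of size $\ge\alpha n/2$. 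Choosing $\eps$ small enough that $(1-\gamma^2/4)(\beta')^2\eps<\xi$, the candidate edges \emph{not} added to $G^{\cR}$ number at most $\xi n$ whp, and since $G^{\cR}_{(\tcx+\eps)n}$ contains $F[W']$ together with every useful candidate edge, it inherits a component of size $\ge\alpha n/2$. For arbitrary $t>\tcx$, monotonicity of $L_1$ under edge addition extends this conclusion to any $\eps=t-\tcx>0$. The main point distinguishing this argument from the previous two proofs is the lower bound $\rho_1(\tcx)>0$; once that is in hand, the rest is a routine BJR-based calculation.
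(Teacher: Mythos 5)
Your overall structure parallels the paper's proof: you start from $F = G^{\cR}_{\tcx n}$, identify the steps where the large-biased property forces the rule to add a uniform random edge inside a large vertex subset, and feed the result into the BJR machinery. The observation that $\E[N_1(G^{\cR}_i)\mid G^{\cR}_{i-1}]\ge(1-\ell/n)\,N_1(G^{\cR}_{i-1})$ yielding $\rho_1(\tcx)\ge e^{-4\tcx}>0$ is a useful elaboration that the paper asserts without proof.

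However, the way you handle the history-dependence of the ``useful'' event has a genuine gap. You appeal to BJR stability (as in the take-it-or-leave-it proof) to argue that deleting the non-useful candidate edges from $\hat F$ still leaves a giant. But the conditional probability of a candidate step being useful is only bounded below by $(\gamma/2)^2$, and $\gamma$ can be as small as $e^{-4}$, so the fraction of candidates that are non-useful may be as large as $1-(\gamma/2)^2\approx 0.9999$. BJR stability (Theorem~3.9) is a perturbation result: the deletable edge count $\xi n'$ is necessarily at most, and typically much smaller than, the total candidate-edge count $\theta n' = \beta'\eps n'$; in particular $\xi=O(\eps)$. Your step ``choosing $\eps$ small enough that $(1-\gamma^2/4)(\beta')^2\eps<\xi$'' is therefore circular — both sides scale like $\eps$, and $\xi$ depends on the kernel and measure, which in turn depend on the $\eps$-dependent cutoff $K$ — and in any case the required deletion fraction $\xi/(\beta'\eps)\ge 1-(\gamma/2)^2$ is forced to be essentially $1$, which the stability theorem cannot supply. (The take-it-or-leave-it proof escapes this by an ``either/or'' structure — declined $\cL_1$ edges are replaced by $\cL_2$ edges — which has no analogue here.)

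The route the paper intends behind ``the remainder of the argument is as for Theorem~\ref{conj_bs}'' is a direct coupling, not stability. Since, conditionally on the history and on a candidate step being useful, the pair $e_2=v_3v_4$ is uniform on $W^2$ and independent of everything else, one can sequentially feed an auxiliary i.i.d.\ sequence of uniform edges $f_1,f_2,\ldots$ in $W$ as the values of $e_2$ at the useful steps. A supermartingale argument (replacing the Chernoff bound used for i.i.d.\ good steps in the bounded-size case) shows whp at least $\theta n$ useful steps occur in the window, so $G^{\cR}_{(\tcx+\eps)n}$ contains $F[W]\cup\{f_1,\ldots,f_{\theta n}\}$. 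Working with the full $W$ rather than a truncation $W'$, the kernel norm is $\infty$ as in \eqref{infnorm}, so this subgraph already contains a giant for every $\theta>0$ — no stability argument and no $\eps$-dependent cutoff $K$ are needed, and the result holds for all $\eps>0$ rather than only $\eps$ above some threshold.
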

\begin{proof}
The proof is very similar to that of Theorem~\ref{conj_bs}. Indeed, as usual
we start from $F=G_{\tcx n}^{\cR}$. As before, let $W$ be the set of vertices
of $F$ in components of size greater than $B$.
Call a subsequent step \emph{good} if $e_1$ joins two vertices in components of size one 
and $e_2$ joins two vertices in $W$. Since there are whp at least some
constant times $n$ isolated vertices in $G^{\cR}_{(\tcx +1)n}$, and (as before), $W$
whp has size at least a constant times $n$, off an event of small probability
the conditional probability (given the history) that the next step is good
is always at least some positive constant. Furthermore, when a step is good,
the added edge is uniformly random among all possible edges inside $W$. The
remainder of the argument is as for Theorem~\ref{conj_bs}; we omit the details.
\end{proof}

The results above all illustrate the idea that if we can find a reasonable 
number of uniformly random edges among the edges selected by our process, then 
the process will be `well behaved' (will have $\tc=\tcx$). 
This approach can be used to prove Conjecture~\ref{conj} for other special 
classes of size rules, but it seems that additional ideas are needed 
for the general case.

\subsection{Examples of delayed percolation}\label{sec:delay}
Having given several partial results supporting our belief in 
Conjecture~\ref{conj}, in this section we show that the conjecture cannot be 
extended to arbitrary $\ell$-vertex rules. More concretely, we give examples of 
simple rules that can delay the appearance of linear size components for 
$\Omega(n)$ steps beyond the point where the susceptibility diverges. The rules 
we use behave like size rules almost all the time.

We start by introducing the \emph{$r$-sum rule} $\cS_r$, which is a $2r$-vertex 
size rule. Given vertices $(v_{1}, \ldots, v_{\ell})$ and the corresponding 
list of component sizes $(c_1, \ldots, c_{\ell})$, the $r$-sum rule adds the 
pair $v_{2j-1}v_{2j}$ with the (smallest, if there are ties) $j \in [r]$ that 
minimizes the sum $c_{2j-1}+c_{2j}$ of the component sizes. 
Recall the definition of $F_{i}^{\cR}$ given in Section~\ref{sec:evo}: 
informally it denotes the graph that we obtain by starting with the 
initial graph $F_{0}^{\cR}=F$ and then following $i$ steps of an Achlioptas 
process using the rule $\cR$ (to decide which edges to add in each step). 
Intuitively, the next lemma states that the $r$-sum rule does not substantially 
change (uniform) polynomial tails for $N_{\ge k}$ during some $\delta n$ steps 
(here we use $\cS_r$ for concreteness; other size rules exhibit similar 
behaviour). 
\begin{lemma}\label{lem:dg}
Let $F$ be a graph on $n$ vertices. Suppose there are $x,C>0$ and 
$K=K(n) \ge 1$ such that for all $1 \le k \le K$ we have
\begin{equation}\label{eq:dg:hyp}
N_{\ge k}(F) \le C k^{-x}n.
\end{equation}
Given $r \ge 1+1/x$ there exists 
$\delta = \delta(x,C,r)>0$ such if $n$ is large enough then, 
with probability at least $1-n^{-99}$, for 
all $1 \le k \le {K'}=\min\{K,n^{1/[2(1+x)]}\}$ we have 
\begin{equation}\label{eq:dg:con}
N_{\ge k}(F_{\delta n}^{\cS_r}) \le 2C k^{-x}n . 
\end{equation}
\end{lemma}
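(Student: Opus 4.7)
I will run a stopping-time induction. Let $\tau$ be the first step $i\ge 0$ at which $N_{\ge k}(F_i^{\cS_r})>2Ck^{-x}n$ for some $1\le k\le K'$; by \eqref{eq:dg:hyp} we have $\tau\ge 1$, and the goal is to show $\Pr(\tau\le \delta n)\le n^{-99}$ for a suitable $\delta=\delta(x,C,r)$.

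Two one-step facts drive everything. First, for \emph{any} rule and any $k$, the deterministic bound $\Delta N_{\ge k}:=N_{\ge k}(F_{i+1})-N_{\ge k}(F_{i})<2k$ holds: when the new edge joins two components of sizes $a,b<k$ with $a+b\ge k$, the increment is $a+b<2k$; when one end already lies in a component of size $\ge k$, the increment is $<k$; otherwise it is zero. Second, the defining feature of $\cS_r$ is that it selects the pair with the \emph{minimum} sum $c_{2j-1}+c_{2j}$, so a positive increment in $N_{\ge k}$ forces every one of the $r$ pairs to satisfy $c_{2j-1}+c_{2j}\ge k$. Under the inductive hypothesis $N_{\ge s}(F_i^{\cS_r})/n\le 2Cs^{-x}$ for $s\le K'$ (valid for $s=k/2$ since $k\le K'$), a union bound and the independence of the $r$ disjoint pairs yield
\[
\Pr(\Delta N_{\ge k}>0\mid \mathcal F_i,\tau>i)\le \bigl(C_0 k^{-x}\bigr)^r, \qquad C_0=2^{x+2}C,
\]
and combined with $\Delta N_{\ge k}<2k$ gives $\E[\Delta N_{\ge k}\mid \mathcal F_i,\tau>i]\le 2C_0^r k^{1-xr}\le C_3 k^{-x}$, the last inequality using $xr\ge x+1$, which is exactly the content of the hypothesis $r\ge 1+1/x$.

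Fix $k\le K'$ and form the compensated martingale $M_i=N_{\ge k}(F_{i\wedge\tau}^{\cS_r})-\sum_{j<i\wedge\tau}\E[\Delta N_{\ge k}\mid \mathcal F_j]$. By the above its increments are bounded by $2k$ and its total conditional variance up to step $\delta n$ is at most
\[
V\le \delta n\cdot (2k)^2\cdot (C_0 k^{-x})^r = O\bigl(\delta n\, k^{2-xr}\bigr)=O\bigl(\delta n\, k^{1-x}\bigr).
\]
Choose $\delta$ so that $\delta C_3\le C/2$; then the compensator is at most $(C/2)k^{-x}n$ throughout $[0,\delta n]$, and on $\{\tau\le \delta n\}$ one must have $M_\tau-M_0\ge (C/2)k^{-x}n$. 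Freedman's inequality (applied to the maximum via the optional stopping form) gives, with $B=2k$ and $t=(C/2)k^{-x}n$,
\[
\Pr\bigl(\max_{i\le \delta n} (M_i-M_0)\ge t\bigr)\le \exp\!\left(-\frac{t^2}{2V+\tfrac{2}{3}Bt}\right)\le \exp\!\left(-c\cdot\frac{n}{k^{1+x}}\right)\le \exp(-cn^{1/2}),
\]
where the final inequality uses $k\le K'\le n^{1/(2(1+x))}$, i.e.\ $k^{1+x}\le n^{1/2}$. A union bound over $1\le k\le K'\le n$ then controls $\Pr(\tau\le \delta n)$ by $n\cdot e^{-cn^{1/2}}\le n^{-99}$ for $n$ large, completing the induction.

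The main obstacle is \emph{not} computing expectations, which go through cleanly once one notes the deterministic bound $\Delta N_{\ge k}<2k$; it is rather squeezing enough concentration out of Freedman's inequality for all admissible $k$ simultaneously. The hypothesis $r\ge 1+1/x$ is what makes the drift $O(k^{-x})$ (so that it fits under the target bound), while the restriction $k\le n^{1/(2(1+x))}$ built into $K'$ is precisely what is needed to make the Freedman exponent $n/k^{1+x}$ dominate $\log n$ uniformly; allowing larger $k$ would force the target $2Ck^{-x}n$ below the natural $n^{1/2}$ fluctuation scale of the martingale, and a qualitatively different argument would be needed.
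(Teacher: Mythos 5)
Your proof is correct and rests on the same ideas as the paper's: the observation that $\cS_r$, by selecting the minimum-sum pair, forces all $r$ (disjoint, hence conditionally independent) pairs to contain a vertex in a component of size $\ge \lceil k/2\rceil$ whenever $N_{\ge k}$ increases, giving the per-step probability bound $(2N_{\ge\lceil k/2\rceil}/n)^r$; the per-step increment bound $\Delta N_{\ge k}<2k$; the hypothesis $r\ge 1+1/x$ to turn $k^{-rx}$ into $k^{-(1+x)}$ and so obtain drift $O(k^{-x})$; and the cap $K'$ chosen exactly so that the concentration exponent $\asymp n/k^{1+x}$ is at least of order $n^{1/2}$. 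The only genuine difference is in the concentration bookkeeping. The paper defines auxiliary indicators $Y_{k,i}$ that are forced to $0$ whenever the inductive hypothesis for smaller indices fails, so that $Y_k=\sum_i Y_{k,i}$ is stochastically dominated by a $\mathrm{Binomial}(\delta n,\xi_k)$ variable and a bare Chernoff bound finishes the job; you instead introduce a single global stopping time $\tau$ and apply Freedman's inequality to the stopped compensated process. Both implementations are valid; the paper's is slightly more elementary in that it needs only the probability that a step is active (no conditional variance bound), while the martingale route would adapt more readily if the increments were not $\{0\}$-or-bounded. One small precision worth adding to your write-up: the claim that on $\{\tau\le\delta n\}$ one has $M_\tau-M_0\ge (C/2)k^{-x}n$ holds only for the particular index $k$ at which $\tau$ is attained, so one should first decompose $\{\tau\le\delta n\}$ by that index and then union bound over $k$; since you do take a union bound over $k$, the argument goes through, but the sentence as phrased could be read as asserting the lower bound simultaneously for every $k$.
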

\begin{proof}
Set $\delta=2^{-[(2+x)r+3]}C^{-(r-1)}$. 
Let $\cE_{i',k'}$ denote the event 
that for all $0 \le i \le i'$ and $1 \le k \le k'$ we have 
\begin{equation}\label{eq:dg:ind}
N_{\ge k}(F_{i}^{\cS_r}) \le 2C k^{-x} n .
\end{equation}
Observe that it suffices to show that $\cE_{\delta n,{K'}}$ fails with 
probability at most $n^{-99}$. For $k \le {K'}$ let $X_{k,i}$ denote 
the indicator function of the event 
$N_{\ge k}(F_{i}^{\cS_r}) \neq N_{\ge k}(F_{i-1}^{\cS_r})$. Set 
$X_{k} = \sum_{1 \le i \le \delta n} X_{k,i}$ and 
$Y_k = \sum_{1 \le i \le \delta n} Y_{k,i}$, where
\[
Y_{k,i} \; = \;	\begin{cases}
		X_{k,i}, & ~~\text{if $\cE_{i-1,k-1}$ holds} ,\\
		0, & ~~\text{otherwise} .
	\end{cases}
\]
Note that in each step a new component of size at least $k$ is only created by 
$\cS_r$ if in each pair $v_{2j-1}v_{2j}$ at least one vertex is in a component 
of size at least $\ceil{k/2}$. So, whenever $\cE_{i-1,k-1}$ holds, using 
\eqref{eq:dg:ind} and $r \ge 1+1/x$, we see that the probability that 
$X_{k,i}=1$ is at most 
\[
\left(\frac{2 N_{\ge \ceil{k/2}}(F_{i-1}^{\cS_r})}{n}\right)^{r} \le \left(\frac{4C}{(k/2)^{x}}\right)^{r}
 = \frac{\left(2^{2+x}C\right)^r}{k^{rx}}
 \le \frac{\left(2^{2+x}C\right)^r}{k^{1+x}} = \xi_{k} .
\]
Since $Y_{k,i}=0$ whenever $\cE_{i-1,k-1}$ fails, it follows that $Y_k$ is 
stochastically dominated by a binomial random variable with $\delta n$ trials 
and success probability $\xi_{k}$. 
Note that \eqref{eq:dg:hyp} implies $C \ge 1$. 
Now, using $k \le {K'}$ we have $\delta n \xi_k \ge C/8\cdot n^{1/2} \ge 600 \log n$ 
for $n \ge n_0$, so standard Chernoff bounds yield 
\begin{equation}\label{eq:dg:CB}
\Pr(Y_k \ge 2\delta n \xi_{k}) \le e^{-\delta n \xi_k/3} \le n^{-200} . 
\end{equation}
Next we claim that $\cE_{\delta n,k-1}$ and $Y_{k} < 2\delta n \xi_{k}$ 
together imply $\cE_{\delta n,k}$, so that 
$\Pr(\neg \cE_{\delta n,k}) \le \Pr(\neg \cE_{\delta n,k-1}) + \Pr(Y_k \ge 2\delta n \xi_{k})$. 
Indeed, by monotonicity $\cE_{\delta n,k-1}$ implies $\cE_{i,k-1}$ for every 
$0 \le i \le \delta n$, so $Y_k=X_k$. Now, since $N_{\ge k}$ increases by at 
most $2k$ per step, by choice of $\delta$ it follows that 
\[
N_{\ge k}(F_{\delta n}^{\cS_r})-N_{\ge k}(F_{0}^{\cS_r}) \le 2k Y_k \le 4\delta \left(2^{2+x}C\right)^r k^{-x} n \le C k^{-x} n , 
\]
which together with $N_{\ge k}(F_{0}^{\cS_r}) = N_{\ge k}(F)\le C k^{-x} n$ 
implies $N_{\ge k}(F_{\delta n}^{\cS_r}) \le 2C k^{-x} n$, as claimed. 
Iterating the above argument for $k \le {K'}$ and noting that 
$\cE_{\delta n,1}$ always holds due to $C \ge 1$, 
using \eqref{eq:dg:CB} we obtain 
\[
\Pr(\neg \cE_{\delta n,{K'}}) \le \sum_{2 \le k \le {K'}} \Pr(Y_{k} \ge 2\delta n \xi_{k}) \le n^{-99} ,
\]
and the proof is complete. 
\end{proof}
Let $\cD_{r}$ denote the rule which always adds the pair $v_1v_2$ during the
first $n/2$ steps (corresponding to an Erd\H os--R\'enyi evolution with 
$\ell = 2$); afterwards it `switches' and uses the $r$-sum rule $\cS_r$. The 
point is that many properties of the `critical' Erd\H os--R\'enyi random graph 
$G_{n,n/2}$ are well known: there exist constants $C,\alpha > 0$ and a function 
$K=K(n)$ with $K \to \infty $ as $n \to \infty $ such that whp 
$S(G_{n,n/2}) \ge n^{\alpha}$ and $N_{\ge k}(G_{n,n/2}) \le C k^{-1/2}n$ for 
all $1 \le k \le K$. So, by conditioning on these properties and then using 
Lemma~\ref{lem:dg}, we immediately deduce the main result of this section. 
Indeed, using the rule $\cD_{r}$ for $r \ge 3$ we whp have diverging 
susceptibility after $n/2$ steps, but in $\delta n$ subsequent steps whp no 
linear size components appear (in fact, in this case $\tcx=1/2<\tc$ holds). 
\begin{corollary}\label{cor:delayI}
For every $r \ge 3$ there exists $\delta=\delta(r)>0$ such that we have whp 
$S(G_{n/2}^{\cD_{r}}) = \omega(1)$ and $L_1(G_{n/2+\delta n}^{\cD_{r}})=o(n)$. 
\end{corollary}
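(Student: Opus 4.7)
The plan is to apply Lemma~\ref{lem:dg} with $x=1/2$ to the graph produced by the first $n/2$ Erd\H os--R\'enyi steps, and then read off both conclusions.

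First I would record the standard facts about the critical random graph $G_{n,n/2}$: there exist constants $C_0,\alpha>0$ and a function $K(n)\to\infty$ (one may take $K(n)=\log n$, which is more than enough) such that whp $S(G_{n,n/2})\ge n^{\alpha}$ and $N_{\ge k}(G_{n,n/2})\le C_0 k^{-1/2}n$ for all $1\le k\le K(n)$. The susceptibility lower bound is immediate from $S(G)\ge L_1(G)^2/n$ together with $L_1(G_{n,n/2})=\Theta(n^{2/3})$ whp; the polynomial tail bound at the Erd\H os--R\'enyi critical window is classical (see e.g.\ \cite{JKLP1993,BBRG}). Since $G_{n/2}^{\cD_r}$ is distributed exactly as $G_{n,n/2}$ by construction of $\cD_r$, the lower bound on $S$ already gives the first half of the corollary: $S(G_{n/2}^{\cD_r})\ge n^{\alpha}=\omega(1)$ whp.

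Next I would condition on the high-probability event that the polynomial tail bound above holds at time $n/2$, set $F=G_{n/2}^{\cD_r}$, and apply Lemma~\ref{lem:dg} with $x=1/2$, $C=C_0$ and $K=K(n)$. The hypothesis $r\ge 1+1/x=3$ is exactly what the corollary assumes, so the lemma supplies a $\delta=\delta(C_0,r)>0$ such that, with probability at least $1-n^{-99}$,
\[
N_{\ge k}(F_{\delta n}^{\cS_r})\le 2C_0 k^{-1/2}n \qquad \text{for all } 1\le k\le K',
\]
where $K'=\min\{K(n),n^{1/3}\}\to\infty$. Because $\cD_r$ follows rule $\cS_r$ after step $n/2$, the random graph $F_{\delta n}^{\cS_r}$ has the same conditional distribution as $G_{n/2+\delta n}^{\cD_r}$.

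To conclude $L_1(G_{n/2+\delta n}^{\cD_r})=o(n)$ whp, I would split into two cases. If $L_1<K'$ then $L_1\le K'\le n^{1/3}=o(n)$. If $L_1\ge K'$, then the largest component alone is counted in $N_{\ge K'}$, so
\[
L_1(G_{n/2+\delta n}^{\cD_r})\le N_{\ge K'}(G_{n/2+\delta n}^{\cD_r})\le 2C_0(K')^{-1/2}n=o(n)
\]
since $K'\to\infty$. Combining both cases yields the second half of the corollary.

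The only substantive point is really the matching of exponents: the critical Erd\H os--R\'enyi graph has a $k^{-1/2}$ tail, and the lemma's preservation threshold $r\ge 1+1/(1/2)=3$ is precisely the hypothesis of the corollary. Everything else is bookkeeping; the hard work has been done inside Lemma~\ref{lem:dg}, so I do not anticipate any further obstacle beyond citing the two standard facts about $G_{n,n/2}$.
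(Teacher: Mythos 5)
Your argument is correct and matches the paper's own proof precisely: after $n/2$ steps $G_{n/2}^{\cD_r}$ is distributed as critical $G_{n,n/2}$, the classical susceptibility and power-law tail estimates for $G_{n,n/2}$ are then invoked, and Lemma~\ref{lem:dg} with $x=1/2$ (so $r\ge 1+1/x=3$) supplies the $\delta$; your case split to turn the preserved tail bound into $L_1=o(n)$ is exactly the bookkeeping the paper leaves implicit. One small imprecision: $L_1(G_{n,n/2})=\Theta(n^{2/3})$ \emph{whp} is not quite right, since $L_1/n^{2/3}$ has a nondegenerate limiting distribution on $(0,\infty)$, so $L_1\ge cn^{2/3}$ fails with probability bounded away from $0$ for fixed $c>0$; what you actually need, and what is true, is $L_1\ge n^{2/3-\eps}$ whp for any fixed $\eps>0$, which still gives $S\ge L_1^2/n\ge n^{1/3-2\eps}=\omega(1)$ whp, so the conclusion stands.
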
 
Alternatively, using essentially the same line of reasoning, we obtain a 
similar result by switching after the first step where the susceptibility is at 
least $L=L(n)=\omega(1)$, for $L$ not too large. 
Furthermore, we can replace $\cS_r$ by other suitable size rules. For example, 
the rule $\cM_\ell$, which always connects two vertices with the two smallest 
component sizes $c_j$, satisfies an analogue of Lemma~\ref{lem:dg} for 
$\ell \ge 2+1/x$. So the rule $\cC_{\ell}$, which switches from an 
Erd\H os--R\'enyi evolution (always adding $v_1v_2$) to $\cM_\ell$ after 
$n/2$ steps, yields another example with $\tcx<\tc$. 
\begin{corollary}\label{cor:delayC}
For every $\ell \ge 4$ there exists $\delta=\delta(\ell)>0$ such that we have whp 
$S(G_{n/2}^{\cC_{\ell}}) = \omega(1)$ and $L_1(G_{n/2+\delta n}^{\cC_{\ell}})=o(n)$. 
\end{corollary}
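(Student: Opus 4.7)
The plan is to mirror the derivation of Corollary~\ref{cor:delayI}, replacing Lemma~\ref{lem:dg} by its natural analogue for the $\cM_\ell$ phase. The first $n/2$ steps of $\cC_\ell$ coincide with an Erd\H os--R\'enyi process, and classical results on the critical random graph $G_{n,n/2}$ supply constants $C,\alpha>0$ and a function $K=K(n)\to\infty$ such that whp both $S(G_{n,n/2}) \ge n^{\alpha}$ and $N_{\ge k}(G_{n,n/2}) \le C k^{-1/2} n$ for all $1 \le k \le K(n)$. The first bound immediately yields $S(G^{\cC_\ell}_{n/2}) = \omega(1)$, so the only real content is controlling $L_1$ during the subsequent $\delta n$ applications of $\cM_\ell$.

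The core step is to establish the following analogue of Lemma~\ref{lem:dg}: under hypothesis~\eqref{eq:dg:hyp}, with $\ell \ge 2+1/x$ in place of $r \ge 1+1/x$, there exists $\delta=\delta(x,C,\ell)>0$ such that whp \eqref{eq:dg:con} holds with $F_{\delta n}^{\cS_r}$ replaced by $F_{\delta n}^{\cM_\ell}$. The key structural observation is that $\cM_\ell$ adds an edge between the two vertices whose components carry the two smallest sizes $c_{(1)} \le c_{(2)}$ among $c_1,\dots,c_\ell$; the merged component has size $c_{(1)}+c_{(2)} \ge k$ only if $c_{(2)} \ge \lceil k/2\rceil$, which in turn forces at least $\ell-1$ of the $\ell$ uniformly random vertices to lie in components of size $\ge \lceil k/2 \rceil$. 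Under the inductive hypothesis $N_{\ge j}(F_{i-1}^{\cM_\ell})\le 2Cj^{-x}n$ for $j \le k$, the per-step probability of any change in $N_{\ge k}$ is then at most
\[
\binom{\ell}{\ell-1}\left(\frac{2C\lceil k/2\rceil^{-x}}{1}\right)^{\ell-1} \le \ell\,(2^{1+x} C)^{\ell-1}\,k^{-x(\ell-1)} \le \frac{D}{k^{1+x}},
\]
with $D=D(x,C,\ell)$, since the condition $x(\ell-1) \ge 1+x$ is exactly $\ell \ge 2+1/x$. Setting $\xi_k = Dk^{-(1+x)}$ in place of the original $\xi_k$, the remainder of the proof of Lemma~\ref{lem:dg} (binomial domination of $Y_k$, Chernoff concentration, the $2k$-Lipschitz property of $N_{\ge k}$, and inductive propagation in $k$) carries over verbatim, with $\delta$ chosen sufficiently small to close the induction.

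To conclude, condition on the whp event from the first phase and apply the analogue to the process $(G^{\cC_\ell}_{n/2+i})_{0 \le i \le \delta n}$ with $F=G^{\cC_\ell}_{n/2}$ and $x=1/2$; here the hypothesis $\ell \ge 4$ of the corollary matches $\ell \ge 2+1/x$ exactly. This yields whp $N_{\ge k}(G^{\cC_\ell}_{n/2+\delta n}) \le 2Ck^{-1/2}n$ for $1 \le k \le K':=\min\{K(n),n^{1/3}\}$. Since $K' \to \infty$, we have $2C(K')^{-1/2}n = o(n)$; on the other hand, if $L_1 \ge \varepsilon n$ for some fixed $\varepsilon>0$, then for $n$ large $L_1 \ge K'$, so the single largest component alone forces $N_{\ge K'} \ge L_1 \ge \varepsilon n$, a contradiction. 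I expect the main obstacle to lie in the critical case $\ell = 2+1/x$ (here $\ell=4$), where $x(\ell-1)=1+x$ leaves no slack in the exponent of $k$; one has to verify that $\delta n \xi_k \gg \log n$ holds uniformly on $k \le K'$ so that the Chernoff failure probability at each level $k$ is super-polynomially small, which is precisely why the cap $K' \le n^{1/[2(1+x)]}$ appears in the statement.
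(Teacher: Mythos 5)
Your proposal is correct and follows exactly the route the paper intends (the paper only sketches it, remarking that $\cM_\ell$ "satisfies an analogue of Lemma~\ref{lem:dg} for $\ell \ge 2+1/x$"). You have identified the right structural observation — that $\cM_\ell$ can change $N_{\ge k}$ only when the second-smallest component size $c_{(2)}$ is at least $\lceil k/2 \rceil$, forcing at least $\ell-1$ of the $\ell$ random vertices into components of size $\ge \lceil k/2\rceil$ — which produces the exponent $x(\ell-1)$ that the condition $\ell \ge 2+1/x$ reduces to $1+x$; the binomial-domination, Chernoff, $2k$-Lipschitz, and induction-in-$k$ steps then go through verbatim with the cap $K' \le n^{1/[2(1+x)]}$ supplying the needed $\delta n\xi_k \gg \log n$.
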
 
Note that the examples given in Corollary~\ref{cor:delayI} and~\ref{cor:delayC} 
always behave like size rules except that once between two steps they change the 
rule used (by only querying natural parameters such as the number of vertices 
and steps, or the susceptibility). So, one can argue that Conjecture~\ref{conj} 
already fails for a rather restricted superset of size rules.

\bibliographystyle{plain}

\begin{appendix}
\section{Appendix}\label{appendix}
In this appendix we show that, as long as an Achlioptas process contains only 
small components, it will have a very simple cycle structure: most components 
will be trees, some will be unicyclic, and there will (whp) be no `complex' 
components, i.e., ones containing more than one cycle. In fact, we prove the 
result for the more general class of $\ell$-vertex rules. However, here we need 
an additional assumption: in each round the set of added edges is a forest. 
Allowing slightly greater generality, we call a rule \emph{acyclic} if the edges 
between $v_1,\ldots,v_\ell$ added in a single step correspond to a forest on 
$1,\ldots,\ell$. This in particular holds if in each step at most two edges are 
added. Note that such an assumption is in fact necessary for $\ell \ge 3$, 
since always connecting all vertices in each step quickly creates many cycles 
and complex components. 
\begin{lemma}\label{lem:Simple}
Let $\ell \ge 2$ and let $\cR$ be an acyclic $\ell$-vertex rule. 
Given $0 < \delta < 1/4$ and $U=U(n)$, suppose that for $n \ge n_0(\ell,\delta)$ 
we have $1 \le U \le n^{1/4-\delta}$. 
For $n \ge n_0(\ell,\delta)$, with probability at least $1-n^{-\delta/2}$ the 
following holds for every $0 \le i \le n^{1+\delta}$: in $G_i^{\cR}$ there are 
no components of size at most $U$ which contain at least two cycles, and the 
number of vertices in components of size at most $U$ with exactly one cycle is 
at most $U^2n^{2\delta}$. 
\end{lemma}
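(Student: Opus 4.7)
The plan is to bound the cycle structure of size-$\le U$ components of $G_i^{\cR}$ via first-moment estimates on \emph{small cycle-creating events} and then apply Markov's inequality. Call an edge added by $\cR$ at step $s$ a \emph{small cycle-creating edge} if its two endpoints already lie in a common component of $G_{s-1}^{\cR}$ of size at most $U$, and let $A_s$ count such edges added during steps $1,\ldots,s$. Since components under $\cR$ grow monotonically in $s$, every cycle inside a size-$\le U$ component of $G_i^{\cR}$ was produced by a small cycle-creating edge added within some ancestor (necessarily of size $\le U$ at that moment); consequently $A_i$ upper-bounds both the number of size-$\le U$ unicyclic components at time $i$ and the total cyclomatic number summed over all size-$\le U$ components.

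For the unicyclic bound, the probability that two fixed tuple positions at step $s$ both lie in one size-$\le U$ component of $G_{s-1}^{\cR}$ equals $\sum_{c\le U}c\,N_c(G_{s-1}^{\cR})/n^2 \le U/n$, using $\sum_{c\le U}cN_c \le U\sum_c N_c \le Un$. A union bound over the $\binom{\ell}{2}$ pairs of tuple positions followed by summation over $s\le n^{1+\delta}$ gives $\E A_{n^{1+\delta}} \le \binom{\ell}{2}Un^\delta$, and Markov's inequality yields $\Pr(A_{n^{1+\delta}} \ge Un^{3\delta/2}) \le \binom{\ell}{2}n^{-\delta/2}$. On the complement, for every $i \le n^{1+\delta}$, the vertices lying in size-$\le U$ unicyclic components are at most $U\cdot A_i \le U^2 n^{3\delta/2} \le U^2 n^{2\delta}$, establishing the second claim.

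For the first claim, any size-$\le U$ component of $G_i^{\cR}$ with at least two cycles must contain two small cycle-creating edges added at some steps $s_1<s_2$ to size-$\le U$ ancestors $D_1,D_2$. I would distinguish the \emph{non-merging} case, where $D_2$ coincides with the descendant of $D_1$ at time $s_2$ (still of size $\le U$), from the \emph{merging} case, where $D_1,D_2$ are disjoint at $s_2$ and their descendants are later joined at some step $s_3 \in (s_2, i]$. Conditioning on the step-$s_1$ event in the non-merging case, the step-$s_2$ tuple must have two positions in the (specific) size-$\le U$ descendant of $D_1$, giving joint probability $\le \binom{\ell}{2}^2 U^3/n^3$ per ordered pair; summing over pairs of steps yields $\E(\text{non-merging}) \le O(U^3 n^{2\delta-1}) = O(n^{-1/4-\delta})$. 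In the merging case, the additional step-$s_3$ tuple must have one position in each of the size-$\le U$ descendants of $D_1$ and $D_2$, contributing a further factor $O(U^2/n^2)$; summing over triples gives $\E(\text{merging}) \le O(U^4 n^{3\delta-1}) = O(n^{-\delta})$, where the last equality uses $U\le n^{1/4-\delta}$. A residual single-step contribution, in which $\ge 3$ tuple positions at one step lie in a common size-$\le U$ component (letting $\cR$ insert two small cycle-creating edges at once), is $O(U^2 n^{\delta-1}) = O(n^{-1/2-\delta})$ and is negligible. Markov then gives that no complex event occurs with probability $1-O(n^{-\delta})$, and combining with the earlier Markov step bounds the total failure probability by $O(n^{-\delta/2})$.

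The main obstacle is the three-step merging case, whose bound $U^4 n^{3\delta-1} = n^{-\delta}$ is precisely tight with the hypothesis $U\le n^{1/4-\delta}$ and therefore determines the admissible range of $U$; all other event types yield strictly smaller contributions.
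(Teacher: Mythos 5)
Your approach is a direct first-moment argument over cycle-creating configurations, which is genuinely different from the paper's proof. The paper first controls the set $S_{i,U}$ of vertices in small unicyclic components (via a Chernoff bound on the number of steps that can increase the count of such components), and then, conditional on $|S_{i,U}|\le R$ and on the auxiliary event $\cB_1$ failing (that no single step has three positions in one small component, or two disjoint pairs in two small components), bounds the probability of ever creating a small complex component. Your non-merging and merging cases reconstruct the numerical content of the paper's bound on $\cB_3$, and the tight term $U^4 n^{3\delta-1}$ you flag is indeed the bottleneck; so the route is viable in principle and correctly identifies where the hypothesis $U\le n^{1/4-\delta}$ is used.

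There are, however, two substantive issues in the execution. First, the notion of a ``small cycle-creating edge'' (an added edge whose two endpoints already lie in a common size-$\le U$ component of $G_{s-1}^{\cR}$) does not capture all cycle creation: for $\ell\ge 3$, if $v_1,v_3$ lie in one component of $G_{s-1}^{\cR}$ and the rule adds $v_1 v_2$ and $v_2 v_3$ with $v_2$ elsewhere, neither added edge is ``small cycle-creating'' by your definition, yet the cyclomatic number of the resulting small component increases. Hence the claim that $A_i$ upper-bounds the total cyclomatic number of small components fails as stated. What \emph{does} bound the cyclomatic number --- and what your probability computation actually estimates --- is the number of pairs $(s,\{a,b\})$ of a step and an unordered pair of tuple positions with $v_a,v_b$ in one common size-$\le U$ component of $G_{s-1}^{\cR}$; you should reformulate in those terms (the bound $\binom{\ell}{2}U/n$ per step and the rest of the argument go through unchanged). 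Second, the complex-case decomposition is not exhaustive: you omit the configuration of two \emph{disjoint} pairs of tuple positions in two \emph{distinct} small components at the \emph{same} step (this is exactly the paper's event $\cB_{1,i}$(b), and it can directly or at a later step produce a small complex component), and in the merging case you disallow $s_3=s_2$. Both omitted configurations contribute $O(U^2 n^{\delta-1})$ and $O(U^3 n^{2\delta-1})$ respectively, so the method survives once they are added, but as written the case analysis does not cover all ways a small complex component can arise. A minor further point: your Markov threshold $Un^{3\delta/2}$ yields failure probability $\binom{\ell}{2}n^{-\delta/2}$, so to obtain the stated $n^{-\delta/2}$ you need a slightly larger threshold, absorbing the constant and the complex-case contribution into the $n\ge n_0(\ell,\delta)$ hypothesis.
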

\begin{proof}
Set $m=n^{1+\delta}$. 
Let $\cB_{1,i}$ denote the event that in step $i$ one of the following happens: 
(a) at least three of the $\ell$ randomly chosen vertices are contained in the 
same component of size at most $U$, or (b) there are randomly chosen vertices 
$w_1,w_2,w_3,w_4$ and two components $C_1,C_2$ of size at most $U$ such that 
$w_1,w_2 \in C_1$ and $w_3,w_4 \in C_2$. It is easy to see that $\cB_{1,i}$ 
holds with probability at most $\ell^3U^2/n^2+\ell^4 U^2/n^2$. So, denoting by 
$\cB_{1}$ the event that $\cB_{1,i}$ holds for some $i \le m$, we see that 
\[
\Pr(\neg \cB_{1}) \le m \cdot 2\ell^4 U^2/n^2 \le 2\ell^4 n^{-1/2-\delta} .
\]

Let $S_{i,U}$ denote the set of vertices of $G_i^{\cR}$ which are in components 
of size at most $U$ containing exactly one cycle, and let $\cB_{2}$ be the 
event that $S_{i,U}$ contains at least $2\ell^3 U m/n$ components for some 
$i \le m$. Then $\neg\cB_{2}$ implies $|S_{i,U}| \le 2\ell^3 U^2 m/n=R$ for 
every $i \le m$, where $R \le U^2 n^{2\delta}$ for $n \ge n_0(\ell,\delta)$. 
Since in each step the number of components in $S_{i,U}$ changes by at most 
$\ell$, when $\cB_2$ holds there are at least $2\ell^2 U m/n$ steps in which 
the number of components in $S_{i,U}$ increases. This can only happen if at 
least two randomly chosen vertices are in the same component of size at most 
$U$. Since in each step this has probability at most $\ell^2U/n$, the expected 
number of such steps is bounded by $\lambda=\ell^2 U m/n$. Using standard 
Chernoff bounds (and stochastic domination) it follows that 
\[
\Pr(\cB_{2}) \le e^{-\lambda/3} \le e^{-n^{\delta}} . 
\]

Let $\cB_{3}$ denote the event that in some $G_i^{\cR}$ with $i \le m$ there 
exists a component of size at most $U$ which contains at least two cycles. In 
each step where $\cB_{1,i}$ fails, note that a new complex component of size at 
most $U$ can only be created if (a) at least two randomly chosen vertices are 
in $S_{i-1,U}$ or (b) one randomly chosen vertex lies in $S_{i-1,U}$, and two 
randomly chosen vertices are in the same tree component of size at most $U$. 
So, by considering the probability that this happens for some $i \le m$ 
(assuming $|S_{i-1,U}| \le R$), we see that 
\[
\Pr(\neg \cB_{1} \cap \neg\cB_{2} \cap \cB_{3}) \le m \cdot (\ell^2R^2/n^2 + \ell^3RU/n^2) \le 5 \ell^8U^4m^3/n^4 \le 5\ell^8n^{-\delta} , 
\]
completing the proof for $n \ge n_0(\delta,\ell)$.  
\end{proof}
Theorem~\ref{thm:main} tells us that for size rules, for any fixed $t<\tcx$, 
the largest component of $G^{\cR}_{tn}$ whp has size at most $O(\log n)$. 
Taking $U=(\log n)^2$, say, we see that if $\cR$ is acyclic, then whp there are 
no complex components, and at most $n^{o(1)}$ vertices on cyclic components -- 
in other words, almost all components are trees. (We have not tried to optimize 
the bound here -- the method actually gives some power of $\log n$.)
\end{appendix}

\end{document}